\newtheorem{theorem}{Theorem}[section]
\newtheorem{lemma}[theorem]{Lemma}
\newtheorem{remark}[theorem]{Remark}
\newtheorem{prop}[theorem]{Proposition}
\newtheorem{example}[theorem]{Example}
\newtheorem{corollary}[theorem]{Corollary}
\numberwithin{equation}{section}
\newcommand{\D}{{\mathbb D}}
\newcommand{\C}{{\mathbb C}}
\newcommand{\N}{{\mathbb N}}
\newcommand{\cL}{{\mathcal L}}
\newcommand{\cP}{{\mathcal P}}
\newcommand{\cB}{{\mathcal B}}
\newcommand{\su}{\subseteq}
\newcommand\Ker{\mathop{\rm Ker}}
\newcommand{à}{\`a}
\begin{document}

\title{Optimal domain of Volterra operators in classes of Banach
	spaces of analytic functions}

\author{Angela\,A. Albanese, Jos\'e Bonet and Werner\,J. Ricker}%A.\,A. Albanese\textsuperscript{*}, J. Bonet\textsuperscript{+} and W.\,J. Ricker}

\thanks{\textit{Mathematics Subject Classification 2020:}
Primary 46E15, 47B38; Secondary 46E10, 47A10, 47A16, 47A35.}
%\thanks{\textsuperscript{*} Support of the Alexander von Humboldt Foundation is gratefully acknowledged.}
%\thanks{\textsuperscript{*} Research partially supported by    MEC andFEDER Project MTM 2007-62643, GV Project Prometeo/2008/101 and the net MTM 2007--30904--E (Spain).}
\keywords{Optimal domain, Weighted Banach space, Hardy space, Analytic function}

\address{ Angela A. Albanese\\
Dipartimento di Matematica e Fisica
``E. De Giorgi''\\
Universit\`a del Salento- C.P.193\\
I-73100 Lecce, Italy}
\email{angela.albanese@unisalento.it}

\address{Jos\'e Bonet \\
Instituto Universitario de Matem\'{a}tica Pura y Aplicada
IUMPA \\
Universitat Polit\`ecnica de Val\`encia \\
E-46071 Valencia, Spain} \email{jbonet@mat.upv.es}

\address{Werner J.  Ricker \\
Math.-Geogr. Fakultät \\
 Katholische Universität
Eichst\"att-Ingol\-stadt \\
D-85072 Eichst\"att, Germany}
\email{werner.ricker@ku.de}

\begin{abstract}
A thorough investigation is made of the optimal domain space of generalized Volterra operators, Cesàro operators and other operators when they act in various Banach spaces of analytic functions. Of particular interest is the situation when the operators act in Hardy spaces, Korenblum growth spaces and more general weighted spaces. The optimal domain space may be genuinely larger than the initial domain of the operator, or not. In the former case, the initial space may or may  not be dense in the optimal domain space. Sometimes the optimal domain space can be identified with a known Banach space of analytic functions, on other occasions it determines a new space. 
\end{abstract}

\maketitle

\markboth{A.\,A. Albanese, J. Bonet and W.\,J. Ricker}%
{\MakeUppercase{Optimal domain }}

\section{Introduction}

Let $\D:=\{z\in\C:\ |z|<1\}$ and $H(\D)$ denote the Fr\'echet space consisting of all analytic functions on $\D$ equipped with the topology $\tau_c$ of uniform convergence on the compact subsets of $\D$. The classical Cesàro operator $C$, given by 
\[
(Cf)(z):=\sum_{n=0}^\infty\left(\frac{1}{n+1}\sum_{k=0}^n\hat{f}(k)\right)z^n,\quad z\in\D,
\]
where $f\in H(\D)$ has Taylor coefficients $(\hat{f}(n))_{n\in\N_0}:=(\frac{f^{(n)}(0)}{n!})_{n\in\N_0}$, is known to be a continuous linear operator on $H(\D)$ and on the Hardy spaces $H^p$ over $\D$, for $1\leq p<\infty$; see, for example, \cite{Sis}, \cite{Sis2} and the references therein. It was observed in \cite{CR} that, for each $1\leq p<\infty$, there exist certain weighted Hardy spaces $H^p(\omega)$ satisfying $H^p\varsubsetneqq H^p(\omega)$ such that $C\colon H^p(\omega)\to H^p$ continuously. This led to the notion of the \textit{largest} Banach space $X$ of analytic functions on $\D$ continuously contained in $H(\D)$ such that $H^p\subseteq X$ and $C(X)\subseteq H^p$. This largest space $X$, which always exists, was called the \textit{optimal domain} of $C$ (for $H^p$) and was denoted by $[C,H^p]$. The space $[C,H^p]$,  itself  a Banach space of analytic functions on $\D$, is thoroughly investigated in \cite{CR}.

This topic  was  taken up again in the recent article \cite{BDNS}. Given $g\in H(\D)$, consider the generalized Volterra operator 
\begin{equation}\label{eq.GVO}
	(T_gf)(z):=\int_0^zf(\xi)g'(\xi)\,d\xi,\quad z\in\D,
\end{equation}
for $f\in H(\D)$. When $g(z)=z$ on $\D$, this operator is the classical Volterra operator. It is known that $T_g\colon H^p\to H^p$, for $1\leq p<\infty$, is continuous if and only if $g\in BMOA$; see \cite{Pom} for $p=2$ and \cite{AleSi1} for $1\leq p<\infty$. For $g(z)=-{\rm Log} (1-z)$ one recovers the Cesàro operator $C$ in the sense that
\[
(T_gf)(z)=\int_0^z \frac{f(\xi)}{1-\xi}\,d\xi=z(Cf)(z),\quad z\in\D,\quad f\in H^p.
\]
The notation used in \cite{BDNS} for the optimal domain is, of course, adapted from \cite{CR}, namely,
\[
[T_g, H^p]:=\{f\in H(\D):\ T_gf\in H^p\}
\]
equipped with the norm $\|f\|_{[T_g,H^p]}:=\|T_gf\|_{H^p}$. It is shown, for each $1\leq p<\infty$ and $g\in BMOA$, that $[T_g, H^p]$ is actually  a Banach space of analytic functions on $\D$, that the continuous inclusion $H^p\subseteq [T_g,H^p]$ is \textit{strict}, that
\begin{equation}\label{eq.UOP}
	H^p=\cap_{g\in BMOA}[T_g,H^p]
	\end{equation}
and,  that $[T_g,H^{p_2}]\varsubsetneqq [T_g,H^{p_1}]$ whenever $1\leq p_1<p_2<\infty$. Each of the articles \cite{BDNS}, \cite{CR} deal with the optimal domain of operators acting in the \textit{particular class} of spaces $H^p$. In \cite{CR} the relevant operator is the Cesàro operator whereas \cite{BDNS} deals with the entire class of generalized Volterra operators $T_g$, which contains $C$ as one of its members. It is the \textit{injectivity} of $T_g$ in $H(\D)$, whenever $g\in H(\D)$ is non-constant (cf. \cite{AND}), which ensures that the semi-norm $\|\cdot\|_{[T_g,H^p]}$ is actually a \textit{norm} in $[T_g,H^p]$. For the completeness of $[T_g,H^p]$ see \cite[Theorem 1]{BDNS}. At this stage it is important to point out that the notation used for generalized Volterra operators in the literature is not consistent. In this article and in \cite{ABR_JFA}  we will (as do many other authors) denote the above operator $T_g$ by $V_g$, whereas $T_g$ will henceforth denote the related operator given by
\begin{equation}\label{eq.CC}
	(T_gf)(z):=\frac{1}{z}\int_0^zf(\xi)g'(\xi)\,d\xi,\quad z\in\D\setminus\{0\},\ (T_gf)(0):=f(0)g'(0),
\end{equation}
for $f\in H(\D)$. That is, $V_g$ denotes the operator on the right-side of \eqref{eq.GVO} whereas $T_g$ is defined by \eqref{eq.CC}.

The first appearance of a study of the optimal domain space of an operator acting in a Banach space of analytic functions $X$ on $\D$ \textit{other} than a $H^p$-space occurs in \cite{CR1}. The operator involved there is again the Cesàro operator $C$ but, its action is now in the  space
\[
X=\mathscr{H}(ces_2):=\left\{f\in H(\D):\ \left(\frac{1}{n+1}\sum_{k=0}^n|\hat{f}(k)|\right)_{n\in\N_0}\in\ell^2\right\}
\]
equipped with the norm 
\[
\|f\|_{\mathscr{H}(ces_2)}:=\left\|\left(\frac{1}{n+1}\sum_{k=0}^n|\hat{f}(k)|\right)_{n\in\N_0}\right\|_{\ell^2},\quad f\in \mathscr{H}(ces_2).
\]
It is shown in \cite{CR1}, amongst other things,  that $\mathscr{H}(ces_2)$ is a Banach space of analytic functions on $\D$, that $C\colon \mathscr{H}(ces_2)\to \mathscr{H}(ces_2)$ is continuous and that
\[
H^2\varsubsetneqq \mathscr{H}(ces_2)\varsubsetneqq [C,H^2]\varsubsetneqq [C, \mathscr{H}(ces_2)],
\] 
with all inclusions continuous.

One of the aims of \cite{ABR_JFA} is to treat larger classes of Banach spaces of analytic functions on $\D$ beyond the spaces $H^p$. For example, $X$ can be one of the well known \textit{Korenblum  growth Banach spaces} $A^{-\gamma}$ and $A^{-\gamma}_0$ of order  $\gamma>0$. It turns out that the relevant class of symbols $g$ for the operators $V_g$ acting in $A^\gamma$ (resp. $A^\gamma_0$) is the Bloch space $\cB$ (resp. $\cB_0$); see \cite[Section 3]{ABR_JFA}. Alternate descriptions of $[V_g,A^{-\gamma}]$ (resp. $[V_g,A^{-\gamma}_0]$) are presented in Proposition 3.5 and Remark 3.6. To see that the inclusion $A^{-\gamma}\subseteq [V_g, A^{-\gamma}]$ can be strict we refer to Example 3.7 in \cite{ABR_JFA}. The analogue of \eqref{eq.UOP} occurs in Propositions 3.8 and 3.9, namely,
\[
A^{-\gamma}=\cap_{g\in\cB}[V_g,A^{-\gamma}]\ \mbox{ and }\ A^{-\gamma}_0=\cap_{g\in\cB}[V_g,A^{-\gamma}_0].
\]
In Section 5 of \cite{ABR_JFA}, for a given  $g\in H(\D)$, the attention is on the optimal domain spaces of $V_g$ and $T_g$ acting in certain \textit{weighted Banach spaces} of analytic functions $H^\infty_v$ and $H^0_v$ on $\D$ (equipped with the sup-norm) for a class of weight functions $v\colon [0,1)\to (0,\infty)$. This family of spaces properly contains the Korenblum growth Banach spaces, which correspond to the weight $v_\gamma(r):=(1-r)^\gamma$ on $[0,1)$, for $\gamma>0$. The operators $V_g$ and $T_g$ are continuous on $A^{-\gamma}$ precisely when $g\in \cB$. Given a non-constant function $g\in H(\D)$, the operators $V_g$ and $T_g$ are injective on $H(\D)$ and so $\|\cdot\|_{[V_g,H^\infty_v]}$ and $\|\cdot\|_{[T_g,H^\infty_v]}$ are \textit{norms}, as are $\|\cdot\|_{[V_g,H^0_v]}$ and $\|\cdot\|_{[T_g,H^0_v]}$, whenever $V_g(H^\infty_v)\subseteq H^\infty_v$ and $T_g(H^\infty_v)\subseteq H^\infty_v$ (resp. $V_g(H^0_v)\subseteq H^0_v$ and $T_g(H^0_v)\subseteq H^0_v$). For each $g\in H(\D)$ it is shown that $V_g\colon H^\infty_v\to H^\infty_v$ (resp. $V_g\colon H^0_v\to H^0_v$) is continuous if and only if $T_g\colon H^\infty_v\to H^\infty_v$ (resp. $T_g\colon H^0_v\to H^0_v$) is continuous  and, if this is the case, then $[V_g,H^\infty_v]=[T_g,H^\infty_v]$ (resp. $[V_g,H^0_v]=[T_g,H^0_v]$). For $g_0(z):=-{\rm Log}(1-z)$ on $\D$,  which satisfies $g\in \cB_0$, the operator $T_{g_0}=C$. For each $\gamma>0$, it is shown that
\[
A^{-\gamma}\varsubsetneqq [C,A^{-\gamma}]\ \mbox{ and }\ A^{-\gamma}\varsubsetneqq [V_{g_0},A^{-\gamma}]\
\]
as well as
\[
A^{-\gamma}_0\varsubsetneqq [C,A^{-\gamma}_0]\ \mbox{ and }\ A^{-\gamma}_0\varsubsetneqq [V_{g_0},A^{-\gamma}_0].
\]
It is also established, for each $\beta>\gamma$, that 
\[
A^{-\beta}\varsubsetneqq [C,A^{-\gamma}]\ \mbox{ and }\ A^{-\beta}_0\varsubsetneqq [C,A^{-\gamma}_0].
\]
For further results we refer to \cite{ABR_JFA}.

The current paper presents significant refinements and extensions of the results in \cite{ABR_JFA} together with many new results. The operators concerned are again $V_g$ and $T_g$ acting in Hardy spaces, as well as  in certain  weighted Banach spaces $H^\infty_v$ and $H^0_v$. However, the classes of symbols $g$ considered are quite different to those in \cite{ABR_JFA} and the weight functions $v$ satisfy alternate conditions to the ones appearing in \cite{ABR_JFA}. The disc algebra $A(\D)\subseteq H^\infty$ is also treated. The idea is to invoke the differentiation operator $D$ and the integration operator $J$ (which always act continuously in $H(\D)$) \textit{provided} they are also continuous between the appropriate weighted spaces $H^\infty_v$ and $H^\infty_w$. This requires restrictions on $v$ and $w$ but, has the advantage that effective new techniques become available.

Section 2 develops various results concerning the general theory of optimal domain spaces $[T,X]$, for $X$ a Banach space of analytic functions on $\D$ and $T\colon H(\D)\to H(\D)$ a continuous linear operator satisfying $T(X)\subseteq X$. These results are needed in the sequel, where $T$ and $X$ are specialized to be certain concrete operators and spaces, respectively.

In Subsection 3.1 of Section 3 the relevant weighted spaces $H^\infty_v$ and $H^0_v$ are defined. Both of these spaces are continuously contained in $H(\D)$. If $v$ satisfies $\lim_{r\to 1^-}v(r)=0$, then $H^0_v$ is the closure in $H^\infty_v$ of the space of all polynomials. An important and well known concept that is needed is that of $v$ being \textit{log-convex}. In the presence of log-convexity of $v$ and for the associated weight $w(r):=(1-r)v(r)$ on $[0,1)$ it is known that $D$ is continuous from $H^\infty_v$ into $H^\infty_w$ if and only if $v$ satisfies the requirement \eqref{eq.Con-D}. If, in addition, $\lim_{r\to 1^-}v(r)=0$, the  same is true of $D$ mapping 
$H^0_v$ into $H^0_w$. As a sample, under these conditions, we mention for any non-constant function $g\in H(\D)$, that the continuity of $D\colon H^\infty_v\to H^\infty_w$ implies that $H^\infty_v$ is a \textit{proper} subspace of the optimal domain space $[V_g,H^0_v]$ whenever $V_g\colon H^0_v\to H^0_v$ is compact; see Proposition \ref{P.H_vcontained}. For the case of the integration operator $J$ it is known, again whenever $v$ is log-convex and $w(r):=(1-r)v(r)$ on $[0,1)$, that the continuity of $J\colon H^\infty_w\to H^\infty_v$ is equivalent to the requirement that \eqref{eq.D} is satisfied. In particular, if the symbol $g\in H(\D)$ satisfies $g', \frac{1}{g'}\in H^\infty$ and the weight $v$ satisfies both \eqref{eq.Con-D} and \eqref{eq.D}, then both operators $D\colon H^\infty_v\to H^\infty_w$ and $J\colon H^\infty_w\to H^\infty_v$ are continuous. As a consequence, whenever $V_g\colon H^\infty_v\to H^\infty_v$ is continuous, the optimal domain spaces are given precisely by 
\begin{equation}\label{eq.14}
	[V_g,H^\infty_v]=[T_g,H^\infty_v]=H^\infty_w
	\end{equation}
with equivalent norms; see Proposition \ref{P.Opt_H_v} and Corollary \ref{C.Opt_H_v}.

For each $\gamma>0$, it was noted above that the Korenblum growth Banach spaces $A^{-\gamma}$ and $A^{-\gamma}_0$ correspond to the weighted spaces $H^\infty_v$ and $H^0_v$ for the weight function $v_\gamma(r):=(1-r)^\gamma$ on $[0,1)$. The relevant point is that $v_\gamma$ is log-convex, that $\lim_{r\to 1^-}v_\gamma(r)=0$ and that $v_\gamma$ satisfies both \eqref{eq.Con-D} and \eqref{eq.D}. So, for  functions $g\in \cB$ such that $g', \frac{1}{g'}\in H^\infty$, we can apply the previous results. As a sample (cf. Corollary \ref{C.Opt_Kor}), the optimal domain spaces are given by
\begin{equation}\label{eq.15}
	[V_g,A^{-\gamma}]=[T_g,A^{-\gamma}]=A^{-(\gamma+1)},\quad \gamma>0.
	\end{equation}
Or, if $g\in \cB_0$ satisfies both $g', \frac{1}{g'}\in H^\infty$, then $A^{-\gamma}$ is a \textit{proper} subspace of the optimal domain spaces
\begin{equation}\label{eq.16}
	[V_g,A^{-\gamma}_0]=[T_g,A^{-\gamma}_0]=A^{-(\gamma+1)}_0,\quad \gamma>0;
\end{equation}
see Corollary \ref{C.OpK_c}. And so on.

The setting of Subsection 3.2 is the class of Hardy spaces $H^p$, for $1\leq p\leq\infty$, and the disc algebra $A(\D)$. As noted above, for $g\in BMOA$ and $1\leq p<\infty$ the optimal domain spaces $[V_g,H^p]$ are investigated in \cite{BDNS}. This investigation is supplemented in the current paper by allowing $p=\infty$ and incorporates also the related operators $T_g$. The forward shift operator $(Sf)(z):=zf(z)$, for $z\in\D$, which is continuous on $H(\D)$, plays an important role. This operator, when acting in $H^p$, for $1\leq p\leq\infty$, and in $A(\D)$, has operator norm 1 and is injective. The properties of $S$ are used to show, for each $1\leq p\leq\infty$ and $g\in H(\D)$ such that $V_g\colon H^p\to H^p$ is continuous, that the optimal domain spaces $[V_g, H^p]=[T_g,H^p]$ are actually Banach spaces; see Proposition \ref{P.Id_p}. If, in addition, $V_g\colon H^p\to H^p$ is compact, then $H^p$ is a \textit{proper} subspace of $[V_g, H^p]$ (cf. Proposition \ref{P.Hp_inclusion}). Or, if $|g'|>0$ in $\D$ and $T_g\colon H^p\to H^p$ is continuous, then $[T_g,H^p]=[V_g, H^p]$ is a Banach space isometric to $H^p$. If, in addition, $T_g\colon H^p\to H^p$ is not surjective, then also $H^p\varsubsetneqq [T_g,H^p]$ (cf. Proposition \ref{Fact 4}). Concerning $A(\D)$, it is shown that $H^\infty$ is a proper subspace of $[V_g,A(\D)]=[T_g, A(\D)]$ whenever $g\in H(\D)$ has the property that $V_g\colon A(\D)\to A(\D)$ is compact; see Proposition \ref{P.Hinfinito_inclusione}. If $g\in H(\D)$ satisfies $g', \frac{1}{g'}\in H^\infty$ and $V_g\colon A(\D)\to A(\D)$ is continuous, then 
\[
H^1\subseteq [V_g,A(\D)]=[T_g, A(\D)]\subseteq A^{-1}_0\ \mbox{ and }\ H^1\subseteq [V_g,H^\infty]=[T_g, H^\infty]\subseteq A^{-1} ;
\]
see Proposition \ref{P.A_H}. For symbols $g\in H(\D)$ belonging to one of the \textit{Lipschitz spaces} $\lambda_\alpha\subseteq \Lambda_\alpha$, for certain $\alpha>0$, it is shown in Proposition \ref{P.HpHq} that
\[
H^p\subseteq [V_g,H^q]=[T_g,H^q],
\]
with a continuous inclusion, whenever $1\leq p<q$ satisfy $\left(\frac{1}{p}-\frac{1}{q}\right)\leq 1$ and $g\in \Lambda_{\frac{1}{p}-\frac{1}{q}}$, and also that the previous containment is \textit{strict} in the event that $\left(\frac{1}{p}-\frac{1}{q}\right)< 1$ and $g\in \lambda_{\frac{1}{p}-\frac{1}{q}}$.

Section 4 treats the optimal domain spaces of $V_{g_t}$ and $T_{g_t}$ for the particular symbols
\begin{equation}\label{eq.17}
	g_t(z):=-\frac{{\rm Log}(1-tz)}{t},\quad z\in\D,
	\end{equation}
for each $t\in (0,1]$. These symbols have rather pleasant features; see Lemma \ref{P.Propgt}. For instance, each function $g_t\in \cB_0$ and both $g_t', \frac{1}{g'_t}\in H^\infty$. For $t=0$, define $g_0(z):=z$ on $\D$. The operators $C_t:=T_{g_t}$, for $t\in (0,1]$, are known as \textit{generalized Cesàro operators}. Several aspects of these operators (eg., continuity, compactness, spectrum, mean ergodicity) have been studied in various Banach spaces of analytic functions on $\D$ in the recent articles \cite{ABR_AIOT}, \cite{ABR_CM}. Our aim here is to investigate their optimal domain spaces. Given a log-convex weight $v$ on $[0,1)$ satisfying $\lim_{r\to 1^-}v(r)=0$ and with $w(r):=(1-r)v(r)$  on $[0,1)$ it is shown, whenever $D\colon H^\infty_v\to H^\infty_w$ and $J\colon H^\infty_w\to H^\infty_v$ are continuous, that \eqref{eq.14} holds with $g_t$ in place of $g$, that
\[
[V_{g_t},H^0_v]=[C_t,H^0_v]=H^0_w,\quad t\in [0,1),
\]
as Banach spaces and that $H^\infty_v$ is a proper subspace of $[C_t,H^0_v]$; see Proposition \ref{P.C_tHv}. Concerning the Korenblum growth Banach spaces, we mention that \eqref{eq.15} holds with $g_t$ in place of $g$, as does \eqref{eq.16}. In Proposition \ref{P.Kor} it is shown that $A^{-\gamma}$ is a proper subspace of $[C_t, A^{-\gamma}_0]$. For each $ t\in [0,1)$, various properties of the optimal domain space of $C_t$ in the Hardy spaces $H^p$, for $1\leq p<\infty$, (resp. in the disc algebra $A(\D)$) are presented in Proposition \ref{P.Hpp} (resp. Proposition \ref{P.A_Ct}).

In Section 2 of \cite{ABR_JFA} various important properties of the optimal domain of general operators $T\in \cL(H(\D))$ satisfying $T(X)\subseteq X$, with $X$ a Banach space of analytic functions on $\D$, are established. As noted above, further relevant results in this direction are also presented in Section 2. In the final Section 5 we address the special situation when an operator $T\in \cL(H(\D))$ is \textit{not} injective (unlike in Section 2) and has the property that $X:=\Ker(T)$ is \textit{finite dimensional}; see Proposition \ref{P.2} which reveals that some novel features arise in this situation. Examples \ref{Ex1} and \ref{Ex2} illustrate that this can actually occur.

\section{Properties of general  optimal domain spaces}
Given a Banach space $X$, its norm is denoted by $\|\cdot\|_X$, its dual Banach space by $X^*$ and its bidual Banach space by $X^{**}$. The space of all continuous linear operators from $X$ into a Banach space $Y$ is written as $\cL(X,Y)$ or, if $X=Y$, simply as $\cL(X)$. The norm of $T\in \cL(X,Y)$ is given by $\|T\|_{X\to Y}:=\sup_{\|x\|_X\leq 1}\|Tx\|_Y$. The dual operator of $T\in \cL(X,Y)$ is indicated by $T^*\in \cL(Y^*,X^*)$ and its bidual operator by $T^{**}\in \cL(X^{**},Y^{**})$. Finally, the spectrum $\sigma(T;X)$ of $T\in \cL(X)$ is the disjoint union of its point spectrum $\sigma_{pt}(T;X)$, its continuous spectrum $\sigma_{c}(T;X)$ and its residual spectrum $\sigma_{r}(T;X)$. For all these concepts we refer, for example, to \cite{Dun}.

We recall that    $H(\D)$ is a Fr\'echet-Montel space, \cite[\S 27.3(3)]{23}. An increasing  family of norms determining $\tau_c$ is given by
\begin{equation}\label{eq.norme-sup}
	q_r(f):=\sup_{|z|\leq r}|f(z)|,\quad f\in H(\D),
\end{equation}
 for each $r\in (0,1)$. Functions  $f\in H(\D)$  are identified with their Taylor coefficients $\hat{f}:=(\hat{f}(n))_{n\in\N_0} =(\frac{f^{(n)}(0)}{n!})_{n\in\N_0}$,   that is, $f(z)=\sum_{n=0}^\infty \hat{f}(n)z^n$, for $z\in\D$. For each  $z\in\D$ it follows from \eqref{eq.norme-sup} that the evaluation functional $\delta_z\colon  f\mapsto f(z)$, for $f\in H(\D)$, is linear and continuous, that is, $\delta_z\in H(\D)^*$. The space of all continuous linear operators from $H(\D)$ into itself is denoted by $\cL(H(\D))$.

A linear space $X\subseteq H(\D)$ is said to be a \textit{Banach space of analytic functions} on $\D$ if it is a Banach space for some norm $\|\cdot \|_X$ such that the natural inclusion
of $X$ into $H(\D)$ is continuous. The closed graph theorem implies that $T\in \cL(X)$ whenever 
 $T\colon H(\D)\to H(\D)$ is a continuous,  linear operator satisfying $T(X)\subseteq X$.

 The \textit{optimal domain} of an operator  $T\in\cL(H(\D))$ satisfying $T(X)\subseteq X$, with $X$ a Banach space of analytic functions on $\D$, is the linear subspace
 \[
 [T,X]:=\{f\in H(\D)\colon\ Tf\in X\}
 \]
 of $H(\D)$  equipped with the semi-norm
 \[
 \|f\|_{[T,X]}:=\|Tf\|_X,\quad f\in [T,X],
 \]
\cite[Definition 2.2]{ABR_JFA}. For an \textit{injective} operator $T\in \cL(H(\D))$  satisfying $T(X)\subseteq X$, it is known  that $([T,X],\|\cdot\|_{[T,X]})$ is always a normed  space, \cite[Proposition 2.4(ii)]{ABR_JFA}. The map $T_{[T,X]}\colon [T,X]\to X$ defined by $T_{[T,X]}f:=Tf$, for $f\in [T,X]$, is an $X$-valued, linear extension of $T\colon X\to X$. For most operators $T$ of interest it turns out  that $X\subsetneqq [T,X]$, that is, $T_{[T,X]}$ is a genuine extension of $T\in \cL(X)$. But, it  can also happen that $[T,X]=X$, that is, $T\in \cL(X)$ is already  optimally defined; see Remark \ref{R.26}.

The aim of this section is to present  various results concerning the optimal domain of operators acting in general Banach spaces of analytic functions on $\D$. Such results will be needed in subsequent sections.

\begin{prop}\label{Fact 1} Let $X$ be a Banach space of analytic functions on $\mathbb{D}$ and $T\in \cL(H(\D))$ be an  isomorphism satisfying $T(X)\subseteq X$.
\begin{itemize}
	\item[\rm (i)]  $([T,X], \|\cdot \|_{[T,X]})$ is a Banach space isometric to $X$.
	\item[\rm (ii)] If $T\colon X\to X$ is not surjective,  then $X$ is  properly contained  in  $[T,X]$.
\end{itemize}
\end{prop}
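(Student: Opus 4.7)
My plan is to exploit that $T$ being an isomorphism of $H(\D)$ supplies an inverse $T^{-1}\in\cL(H(\D))$, so that $[T,X]$ can be identified with the $H(\D)$-preimage $T^{-1}(X)$, and the map $f\mapsto Tf$ becomes a tautological isometric bijection onto $X$.

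For part (i), the key observation is that the operator $T_{[T,X]}\colon [T,X]\to X$ given by $f\mapsto Tf$ is, directly from the definition $\|f\|_{[T,X]}:=\|Tf\|_X$, an isometry. Injectivity comes for free from injectivity of $T$ on $H(\D)$ (this is also what guarantees, via Proposition 2.4(ii) of \cite{ABR_JFA} cited earlier, that $\|\cdot\|_{[T,X]}$ is a genuine norm on $[T,X]$). Surjectivity is the step where the full hypothesis enters: given $x\in X\subseteq H(\D)$, set $f:=T^{-1}x\in H(\D)$; then $Tf=x\in X$ shows $f\in [T,X]$ with $T_{[T,X]}f=x$. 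Completeness of $[T,X]$ is then inherited from the Banach space $X$ by pulling Cauchy sequences back through the isometric bijection $T_{[T,X]}$.

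For part (ii), pick any $x\in X\setminus T(X)$, which exists by the failure of surjectivity. Put $f:=T^{-1}x\in H(\D)$. As before, $Tf=x\in X$ gives $f\in [T,X]$. If $f$ were in $X$, then $x=Tf\in T(X)$, contradicting the choice of $x$; so $f\in [T,X]\setminus X$. Combined with the containment $X\subseteq [T,X]$ (which is immediate from $T(X)\subseteq X$, since any $h\in X$ has $Th\in X$), this yields the strict inclusion.

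There is no serious technical obstacle; the subtlety is conceptual and lies entirely in where the hypothesis ``isomorphism'' is used. Injectivity and continuity of $T$ on $H(\D)$ alone would not suffice for either part: one needs $T^{-1}$ to map $H(\D)$ into itself so that the preimage of an element of $X$ exists inside $H(\D)$ and can be exhibited as a member of $[T,X]$. Once that is in place, both statements are one-line consequences of the definitions.
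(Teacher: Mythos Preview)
Your proof is correct and follows essentially the same approach as the paper: for (i) the paper invokes \cite[Proposition~2.8]{ABR_JFA} to conclude that $T_{[T,X]}$ is an isometric bijection onto $X$, which is exactly what you spell out directly, and for (ii) the paper's argument is line-for-line the same as yours (choose $g\in X\setminus T(X)$, use surjectivity of $T$ on $H(\D)$ to find $f=T^{-1}g\in H(\D)\setminus X$ with $f\in[T,X]$).
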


\begin{proof} (i) Since $T\colon H(\D)\to H(\D)$ is an isomorphism, we can apply Proposition 2.8 in \cite{ABR_JFA}  to conclude that $([T,X], \|\cdot \|_{[T,X]})$ is  isometric to $X$ via the linear map $T_{[T,X]}\colon [T,X]\to X$. Hence, $([T,X], \|\cdot \|_{[T,X]})$ is a Banach space. 
	
	(ii) The operator $T\colon X\to X$ is necessarily injective but, by assumption, it is not surjective, and so  there exists $g\in X\subseteq H(\D)$ such that $g\not\in T(X)$. Since $T\colon H(\D)\to H(\D)$ is surjective,   there is $f\in H(\D)\setminus X$ satisfying $Tf=g\in X$. Accordingly, $f\in [T,X]$ but, $f\not \in X$.
\end{proof}

% \begin{prop}\label{P3}  Let $X$ be a Banach space of analytic functions on $\D$ and $T\in\cL(H(\D))$ be an injective operator satisfying $T(X)\subseteq X$. The following three properties are equivalent.
 %	\begin{itemize}
 %		\item[\rm (i)] The inclusion $[T,X]\subseteq H(\D)$ is continuous.
 %		\item[\rm (ii)] The evaluation functionals $\{\delta_z:\, z\in \D\}\subseteq [T,X]^*$ and,  for each compact subset $K$ of $\D$, the set $\{\delta_w\colon\ w\in K\}$ is bounded in $[T,X]^*$.
 	%	\item[\rm (iii)] $([T,X],\|\cdot\|_{[T,X]})$ is a Banach space.
 %	\end{itemize}
% If any one of {\rm (i)--(iii)} is satisfied, then $[T,X]$ is a Banach space of analytic functions on $\D$ and $\{\delta_z:\, z\in \D\}\subseteq [T,X]^*$.
 %	\end{prop}

 %\begin{prop}\label{P4}Let $X$ be a Banach space of analytic functions on $\D$ and $T\in\cL(H(\D))$ be an isomorphism  satisfying $T(X)\subseteq X$. Then $([T,X],\|\cdot\|_{[T,X]})$ is isometrically isomorphic to $X$.
 %\end{prop}

Given Banach spaces of analytic functions on $\D$, say $X$ and $Y$, such that $X\subseteq Y$ continuously and an operator $T\in \cL(H(\D))$ satisfying $T(Y)\subseteq X$, it follows that necessarily 
  $T\in \cL(Y,X)$. Indeed, let $(f_n)_{n\in\N}\subseteq Y$ satisfy $f_n\to 0$ in $Y$ and $Tf_n\to g$ in $X$ for some $g\in X$, as $n\to\infty$. Then also $f_n\to 0$ and $Tf_n\to g$ in $H(\D)$ as $n\to\infty$ because both  $X$ and $Y$ are Banach spaces of analytic functions on $\D$. Since $T\in \cL(H(\D))$, it follows that also $Tf_n\to 0$ in $H(\D)$ as $n\to\infty$. Therefore, $g=0$. By the closed graph theorem we can conclude that $T\in \cL(Y,X)$. Note that also    $Y\subseteq [T,X]$ with a continuous inclusion. To see this let $h\in Y$. Then $h\in H(\D)$ and $Th\in T(Y)\subseteq X$. Accordingly, $h\in [T,X]$. So, we can consider the natural inclusion map $\mathscr{J}_{[T,X]}\colon Y\to [T,X]$ defined by $\mathscr{J}_{[T,X]}f:=f$ for $f\in Y$. Since
\[
\|\mathscr{J}_{[T,X]}f\|_{[T,X]}=\|f\|_{[T,X]}=\|Tf\|_X\leq \|T\|_{Y\to X}\|f\|_Y, \ f\in Y,
\] 
it follows that  $\mathscr{J}_{[T,X]}\colon Y\to [T,X]$ is continuous.

The following result is   a refinement of \cite[Propositiom 2.9]{ABR_JFA}. 

\begin{prop}\label{P5} Let $X$ and $Y$ be  Banach spaces of analytic functions on $\D$ satisfying $X\subseteq Y$ with a continuous inclusion. Let $T\in\cL(H(\D))$ be an injective operator such that $([T,X],\|\cdot\|_{[T,X]})$ is a Banach space and $T(Y)\subseteq X$ (hence, also $T(X)\subseteq X$).  The following properties are equivalent.
	\begin{itemize}
		\item[\rm (i)] The continuous linear operator $T\colon Y\to X$ has closed range in $X$.
		\item[\rm (ii)]  The natural inclusion map $\mathscr{J}_{[T,X]}\colon Y\to [T,X]$, defined by $f\mapsto \mathscr{J}_{[T,X]}f:=f$,  has closed range in $[T,X]$.
	\end{itemize}
\end{prop}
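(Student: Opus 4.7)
The plan is to exploit the isometric embedding $T_{[T,X]}\colon [T,X]\to X$, $f\mapsto Tf$, which factors the operator $T\colon Y\to X$ through the inclusion, namely $T = T_{[T,X]}\circ \mathscr{J}_{[T,X]}$ on $Y$. This factorization is the key object, because an isometry automatically transports closedness back and forth between its domain and its image.

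First I would record two observations. By the very definition of the norm on $[T,X]$, the map $T_{[T,X]}$ is an isometric (hence topological) embedding of $[T,X]$ into $X$. Since $[T,X]$ is assumed to be a Banach space, its isometric image $T_{[T,X]}([T,X])$ is a complete subspace of $X$ and therefore is itself \emph{closed} in $X$. Second, because $Y\subseteq [T,X]$ with the inclusion $\mathscr{J}_{[T,X]}$, the image $T(Y)$ coincides with $T_{[T,X]}(\mathscr{J}_{[T,X]}(Y)) = T_{[T,X]}(Y)$, where in the last expression $Y$ is regarded as the subspace $\mathscr{J}_{[T,X]}(Y)$ of $[T,X]$.

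Now I would argue both directions at once. Because $T_{[T,X]}$ is a linear homeomorphism of $[T,X]$ onto $T_{[T,X]}([T,X])$ endowed with the subspace topology from $X$, a subset $E\subseteq [T,X]$ is closed in $[T,X]$ if and only if $T_{[T,X]}(E)$ is closed in $T_{[T,X]}([T,X])$. Applying this to $E = \mathscr{J}_{[T,X]}(Y)$ yields the equivalence
\[
\mathscr{J}_{[T,X]}(Y)\text{ closed in }[T,X] \iff T(Y)\text{ closed in }T_{[T,X]}([T,X]).
\]
Finally, since $T_{[T,X]}([T,X])$ is already closed in $X$, a subset of it is closed in the subspace topology if and only if it is closed in $X$. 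Combining these two equivalences gives (i)$\Leftrightarrow$(ii).

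The argument is essentially free of obstacles; the only point that needs a careful statement (and was precisely the refinement of \cite[Proposition 2.9]{ABR_JFA}) is the completeness of $[T,X]$, which is built into the hypothesis and ensures that $T_{[T,X]}([T,X])$ is automatically closed in $X$. Without that, one would only get that $T(Y)$ is closed in $T_{[T,X]}([T,X])$, not in $X$, so this completeness is what makes the two notions of closed range coincide.
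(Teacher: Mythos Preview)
Your argument is correct and a bit slicker than the paper's. Both proofs rest on the same elementary identity $\|f\|_{[T,X]}=\|Tf\|_X$, but the paper exploits it by invoking the open mapping theorem in each direction: from closedness of $T(Y)$ it extracts a lower bound $\|Tf\|_X\ge c\|f\|_Y$, translates this into a lower bound for $\mathscr{J}_{[T,X]}$, and vice versa. You bypass the open mapping theorem entirely by reading that identity as saying $T_{[T,X]}$ is an isometric embedding, and then using the purely topological fact that a homeomorphism onto a closed subspace transports closed sets back and forth. The advantage of your route is economy and transparency of the logical structure (the factorization $T=T_{[T,X]}\circ\mathscr{J}_{[T,X]}$ does all the work); the paper's version, in exchange, makes the quantitative content (the equivalence-of-norms constants) more explicit, which can be handy when one later wants estimates rather than just closedness. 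You were also right to flag that the completeness hypothesis on $[T,X]$ is precisely what makes $T_{[T,X]}([T,X])$ closed in $X$, which is the hinge in your argument.
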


\begin{proof} (i)$\Rightarrow$(ii) 
	Since $T(Y)\subseteq X$ is a closed subspace of $X$, necessarily $(T(Y),\|\cdot\|_X)$ is a Banach space. Moreover, $T\in \cL(Y,X)$ implies that $T\colon Y\to (T(Y), \|\cdot\|_X)$ is continuous.
	So,   the open mapping theorem implies that the continuous, bijective operator $T\colon Y\to T(Y)$ is an isomorphism. Accordingly,   there exists $c>0$ such that
	\begin{equation*}\label{eq.closed}
		\|Tf\|_X\geq c\|f\|_Y, \quad f\in Y.
		\end{equation*}
	It follows from the previous inequality that
	\[
	\|\mathscr{J}_{[T,X]}f\|_{[T,X]}=\|f\|_{[T,X]}=\|Tf\|_X\geq c\|f\|_Y, \quad f\in Y.
	\]
	Since  $\mathscr{J}_{[T,X]}$ is continuous (as observed prior to the proposition) we can conclude that the inclusion map $\mathscr{J}_{[T,X]}\colon Y\to [T,X]$ has closed range.
	
	(ii)$\Rightarrow$(i) The inclusion map $\mathscr{J}_{[T,X]}\colon Y\to [T,X]$ has closed range and hence, the linear subspace $\mathscr{J}_{[T,X]}(Y)=Y$ is  closed in the Banach space $[T,X]$. So, $(Y, \|\cdot\|_{[T,X]})$ is a Banach space. In view of the continuity of $\mathscr{J}_{[T,X]}\colon (Y,\|\cdot\|_Y)\to ([T,X], \|\cdot\|_{[T,X]})$,  the identity operator $L\colon (Y,\|\cdot\|_Y)\to (Y,\|\cdot\|_{[T,X]})$ is continuous and bijective. We can apply the open mapping theorem to conclude that  $L\colon (Y,\|\cdot\|_Y)\to (Y,\|\cdot\|_{[T,X]})$ is an isomorphism. Accordingly, there exists $c>0$ satisfying $\|Lf\|_{[T,X]}\geq c\|f\|_Y$ for every $f\in Y$. Hence,
	\[
	\|Tf\|_X=\|f\|_{[T,X]}=\|Lf\|_{[T,X]}\geq c\|f\|_Y,\quad f\in Y,
	\]
	which implies that the operator $T\colon Y\to X$ has closed range.
	\end{proof}

An argument as in the proof of Lemma 5.8 in \cite{ABR_JFA} establishes the following fact.

\begin{lemma}\label{Lemma_Compact} Let $(X,\|\cdot\|_X)$ and $(Y,\|\cdot\|_Y)$ be  infinite dimensional Banach spaces and $T\in \cL(X,Y)$ be injective and compact. Then $T$ does not have a closed range in $Y$.
\end{lemma}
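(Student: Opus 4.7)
\medskip

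The plan is to argue by contradiction: assume $T(X)$ is a closed subspace of $Y$ and derive that the closed unit ball $B_X$ of $X$ must be compact, in violation of Riesz's lemma.

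First, I would exploit injectivity together with the closed range hypothesis. Since $T(X)$ is closed in $Y$, it inherits a Banach space structure from $(Y,\|\cdot\|_Y)$. The operator $T\colon X \to T(X)$ is a continuous linear bijection between Banach spaces, so the open mapping theorem delivers a constant $c>0$ with
\[
\|Tx\|_Y \geq c\,\|x\|_X, \qquad x\in X,
\]
equivalently, the inverse $T^{-1}\colon T(X)\to X$ is continuous (here $T(X)$ is equipped with the norm $\|\cdot\|_Y$).

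Next I would invoke compactness of $T$. By definition, $T(B_X)$ is relatively compact in $Y$; since $T(X)$ is closed in $Y$, the set $\overline{T(B_X)}$ (closure in $Y$) is actually contained in $T(X)$ and is compact there. Applying the continuous map $T^{-1}\colon T(X)\to X$ to this compact set yields a compact subset $T^{-1}\bigl(\overline{T(B_X)}\bigr)$ of $X$. Since $T$ is injective one has $T^{-1}(T(B_X))=B_X$, so
\[
B_X \subseteq T^{-1}\bigl(\overline{T(B_X)}\bigr).
\]
Being a closed subset of a compact set, $B_X$ is itself compact in $X$.

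Finally, Riesz's lemma asserts that the closed unit ball of an infinite dimensional normed space is never compact. This contradicts the compactness of $B_X$ established above and completes the argument. The proof is essentially mechanical once the correct setup is in place; the only thing to be mildly careful about is distinguishing the norm topology on $T(X)$ inherited from $Y$ from a potential quotient-style topology, which is avoided here because $T$ is injective and $T(X)$ is closed in $Y$.
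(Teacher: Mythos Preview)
Your argument is correct and is the standard route to this fact: closed range plus injectivity yields a bounded inverse via the open mapping theorem, and then compactness of $T$ forces the closed unit ball of $X$ to be compact, contradicting Riesz's lemma. The paper itself does not give a self-contained proof but simply refers to the argument of Lemma~5.8 in \cite{ABR_JFA}; your write-up is exactly the kind of argument intended there, so nothing further is needed.
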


An immediate consequence of Proposition \ref{P5} is the following fact.

\begin{corollary}\label{C1}
Let $X$ and $Y$ be  Banach spaces of analytic functions on $\D$ satisfying $X\subseteq Y$ with a continuous inclusion. Let  $T\in\cL(H(\D))$ be an injective operator such that $([T,X],\|\cdot\|_{[T,X]})$ is a Banach space and  $T(Y)\subseteq X$. 
\begin{itemize}
		\item[\rm (i)] Suppose that the natural inclusion map $\mathscr{J}_{[T,X]}\colon Y\to [T,X]$ is surjective. Then  the operator $T\colon Y\to X$ has closed range in $X$.
	\item[\rm (ii)] Suppose that the operator $T\colon Y\to X$ fails to have closed range in $X$. Then $Y$ is a proper subspace of $[T,X]$.
	\item[(iii)] Suppose that $X$ and $Y$ are infinite-dimensional and $T\in\cL(Y,X)$ is a compact operator. Then $Y$ is a proper subspace of $[T,X]$.
\end{itemize}
\end{corollary}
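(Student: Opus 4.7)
The plan is to observe that parts (i)--(iii) follow very directly from Proposition \ref{P5} and Lemma \ref{Lemma_Compact}, without requiring any new calculation. The three statements are essentially an unpacking of the equivalence in Proposition \ref{P5} together with one application of the lemma on compact operators.

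For part (i), first I would note that $\mathscr{J}_{[T,X]}\colon Y\to [T,X]$ is always an injective continuous map, since it is nothing but the set-theoretic inclusion $f\mapsto f$ between two linear subspaces of $H(\mathbb{D})$, and continuity was already verified in the paragraph preceding Proposition \ref{P5}. If it is \emph{surjective} by hypothesis, then its range equals the whole target space $[T,X]$, which is trivially closed. Hence condition (ii) of Proposition \ref{P5} is satisfied, and the equivalence stated there forces condition (i): $T\colon Y\to X$ has closed range. No deeper argument is needed.

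Part (ii) is the direct contrapositive of part (i). Suppose, for contradiction, that $Y$ fails to be a proper subspace of $[T,X]$. Since the inclusion $Y\subseteq[T,X]$ always holds (as observed just before Proposition \ref{P5}), this forces $Y=[T,X]$ as sets, and so $\mathscr{J}_{[T,X]}$ is surjective. By part (i), $T\colon Y\to X$ then has closed range, contradicting the assumption.

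For part (iii), I would combine the other ingredients. The operator $T\in\cL(H(\mathbb{D}))$ is injective by hypothesis, so its restriction $T\colon Y\to X$ is also injective. Since $X$ and $Y$ are infinite-dimensional Banach spaces and $T\colon Y\to X$ is assumed compact, Lemma \ref{Lemma_Compact} applies and yields that $T\colon Y\to X$ does not have closed range in $X$. An application of part (ii) then concludes that $Y$ is a proper subspace of $[T,X]$.

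There is no real obstacle here; the only thing worth being careful about is making explicit that $\mathscr{J}_{[T,X]}$ is automatically injective (so that in part (i) ``surjective'' suffices to make it a bijection, even if one wanted to invoke the open mapping theorem directly instead of Proposition \ref{P5}) and that the injectivity hypothesis on $T\in\cL(H(\mathbb{D}))$ is transferred to $T\colon Y\to X$ in part (iii), which is needed for Lemma \ref{Lemma_Compact} to apply.
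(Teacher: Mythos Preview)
Your proof is correct and follows essentially the same approach as the paper: both rely on Proposition~\ref{P5} for parts (i) and (ii) and on Lemma~\ref{Lemma_Compact} combined with part (ii) for part (iii). Your argument in part (i) is in fact slightly more direct, since you observe immediately that a surjective inclusion has closed range (the whole space), whereas the paper passes through the open mapping theorem before invoking Proposition~\ref{P5}; but this is a cosmetic difference, not a genuinely different route.
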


\begin{proof} 
(i) We have seen that
the inclusion map $\mathscr{J}_{[T,X]}\colon Y\to [T,X]$ is a continuous bijection of $Y$ onto its range in $[T,X]$. Accordingly, it is an isomorphism by the open mapping theorem. So, $\mathscr{J}_{[T,X]}$ has a closed range in $[T,X]$. Then Proposition \ref{P5} implies that $T\colon Y\to X$ has a closed range in $X$.

(ii) Suppose it was the case that $Y=[T,X]$. Then the inclusion map $\mathscr{J}_{[T,X]}\colon Y\to [T,X]$ is surjective. Since $([T,X],\|\cdot\|_{ [T,X]})$ is a Banach space,  again the open mapping theorem implies that  $\mathscr{J}_{[T,X]}\colon Y\to [T,X]$ is an isomorphism and hence, it has a closed range in $[T,X]$. Then Proposition  \ref{P5} yields that the operator $T\colon Y\to X$ has  closed range in $X$; a contradiction. So, $Y\subsetneqq [T,X]$. 

(iii) Since $T\in\cL(Y,X)$ is compact, the operator $T\colon Y\to X$ fails to have closed range in $X$; see Lemma \ref{Lemma_Compact}. So, the result follows from part (ii).
\end{proof}

Let $X$ be a Banach space of analytic functions on $\D$ and $T\in \cL(H(\D))$ satisfy $T(X)\subseteq X$, in which case $T\in \cL(X)$. Then $X\subseteq [T,X]$ with a continuous inclusion and the linear extension $T_{[T,X]}\colon [T,X]\to X$ of $T\colon X\to X$ is continuous, \cite[Proposition 2.4 (iii), (iv)]{ABR_JFA}. In general, $X$ need \textit{not} be dense in $[T,X]$. Indeed, let $X:=H^2$ and $T=V_g$ with $g\in BMOA$ given by $g(z):=\int_0^z\exp(\frac{\xi+1}{\xi-1})\,d\xi$, for $z\in\D$. Then the polynomials are not dense in $[V_g,H^2]$, \cite[Proposition 4]{BDNS}, and hence, $H^2$ is not dense in $[V_g,H^2]$, \cite[Proposition 3]{BDNS}. For a further example, see Remark \ref{R.38}. The following result is an extension of \cite[Proposition 3]{BDNS}.

\begin{prop}\label{P.Denso} Let $X$ be a Banach space of analytic functions on $\D$, let $Y\subseteq X$ be a dense linear subspace of $X$ and let $T\in \cL(H(\D))$ satisfy $T(X)\subseteq X$.  Then $Y$ is dense in $[T,X]$ if and only if $X$ is dense in $[T,X]$.
\end{prop}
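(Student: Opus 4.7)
The plan is to establish the two directions separately, noting that one is essentially trivial and the other is a standard two-step approximation argument resting on the continuity of $T$ as an operator on $X$.

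For the forward direction, suppose $Y$ is dense in $[T,X]$. Since $Y\subseteq X\subseteq [T,X]$ (the first by hypothesis, the second because $T(X)\subseteq X$ forces each $f\in X$ to satisfy $Tf\in X$), the density of the smaller set $Y$ in $[T,X]$ automatically yields density of $X$ in $[T,X]$. Nothing further is required here.

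The substantive direction is the converse. Assume $X$ is dense in $[T,X]$, fix $f\in [T,X]$ and $\varepsilon>0$; I will produce $y\in Y$ with $\|f-y\|_{[T,X]}<\varepsilon$. First, by density of $X$ in $[T,X]$, pick $h\in X$ with
\[
\|f-h\|_{[T,X]}=\|T(f-h)\|_X<\varepsilon/2.
\]
Next, recall from the discussion preceding the statement that $T\in \cL(H(\D))$ together with $T(X)\subseteq X$ and the closed graph theorem gives $T|_X\in \cL(X)$; set $M:=\|T|_X\|_{X\to X}+1>0$. By density of $Y$ in $X$ (in the norm $\|\cdot\|_X$), choose $y\in Y$ with $\|h-y\|_X<\varepsilon/(2M)$. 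Then
\[
\|h-y\|_{[T,X]}=\|T(h-y)\|_X\leq \|T|_X\|_{X\to X}\|h-y\|_X<\varepsilon/2,
\]
and the triangle inequality in $[T,X]$ yields $\|f-y\|_{[T,X]}<\varepsilon$. Since $f$ and $\varepsilon$ were arbitrary, $Y$ is dense in $[T,X]$.

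No serious obstacle is expected: the whole argument hinges on the observation that the $[T,X]$-norm on $X$ is dominated by a constant multiple of the $X$-norm (because $T|_X$ is bounded), so that $Y$ being dense in $X$ in the stronger topology propagates to density in the weaker topology $\|\cdot\|_{[T,X]}$, and then transitivity of density (through the intermediate set $X$, already dense in $[T,X]$ by assumption) completes the proof.
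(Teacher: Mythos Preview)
Your proof is correct and follows essentially the same approach as the paper: the trivial direction uses $Y\subseteq X$, and the substantive direction is the same two-step approximation (first approximate $f$ by $h\in X$ in the $[T,X]$-norm, then approximate $h$ by an element of $Y$ in the $X$-norm and use $T|_X\in\cL(X)$ to transfer this to the $[T,X]$-norm). The only cosmetic difference is that you split $\varepsilon$ into two halves via $M=\|T|_X\|_{X\to X}+1$, whereas the paper simply obtains the bound $(1+\|T\|_{X\to X})\varepsilon$.
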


\begin{proof} Suppose that $X$ is dense in $[T,X]$. Given $\varepsilon >0$ and $f\in [T,X]$ there exists $h\in X$ such that $\|f-h\|_{[T,X]}<\varepsilon$. There also exists $g\in Y$ such that $\|h-g\|_X<\varepsilon$. Since $T\in \cL(X)$, we have
	\[
	\|h-g\|_{[T,X]}=\|T(h-g)\|_X\leq \|T\|_{X\to X}\|h-g\|_X
	\]
	from which it follows that 
	\[
	\|f-g\|_{[T,X]}\leq \|f-h\|_{[T,X]}+\|h-g\|_{[T,X]}\leq (1+\|T\|_{X\to X})\varepsilon.
	\]
	This implies that $Y$ is dense in $[T,X]$.
	
	Conversely, if $Y$ is dense in $[T,X]$, then so is $X$ as $Y\subseteq X$.
\end{proof}

\begin{remark}\label{R.26}\rm 
The examples of operators $T\in \cL(H(\D))$ and spaces $X$ exhibited in this paper typically satisfy $X\subsetneqq [T,X]$. Hovewer, it can also happen that $X=[T,X]$. 
%that is, $T_X\colon X\to X$ is already optimally defined. 
For instance, if $h\in H^\infty$, then the \textit{multiplication operator} $T_h\colon H(\D)\to H(\D)$ given by $f\mapsto hf$ is continuous and, whenever $|h|>0$ on $\D$, it  is also injective. Let $X:=H^p$ with $1\leq p\leq \infty$, in which case it is clear that $T_h(X)\subseteq X$. In the event that also $\frac{1}{h}\in H^\infty$ it is routine to check that $[T_h,H^p]=H^p$. On the other hand, if $h(z):=z-u$ on $\D$, say, for some $u$ with $|u|=1$, then $T_h(\frac{1}{h})=\mathbbm{1}\in H^\infty$ but, $\frac{1}{h}\in H(\D)\setminus H^\infty$. Hence, $H^\infty\subsetneqq [T,H^\infty]$. A similar phenomenon occurs for the weighted  Banach spaces $H^\infty_v$ (which arise in Section 3), that is, if $h, \frac{1}{h}\in H^\infty$, then $[T_h,H^\infty_v]=H^\infty_v$. For the case when $T\in \cL(H(\D))$ is a \textit{composition operator} $C_\varphi\colon f\mapsto f\circ \varphi$, for $f\in H(\D)$, where $\varphi\in {\rm Aut}(\D)$  is any automorphism on $\D$, and $X:=H^\infty_v$ for suitable weights $v$ (which includes certain Korenblum growth Banach spaces), it turns out that $[C_\varphi, H^\infty_v]=H^\infty_v$, \cite[Proposition 2.3]{BDLT}. Note that $C_\varphi\in \cL(H(\D))$ is an isomorphism for every  $\varphi\in  {\rm Aut}(\D)$.
\end{remark}

\section{Optimal domain of Volterra operators in classes of  Banach spaces of analytic functions }

Whenever $g\in H(\D)$ is a non-constant function, recall that the \textit{Volterra operator} $V_g\colon H(\D)\to H(\D)$ is the linear operator defined by the right side of \eqref{eq.GVO}, that is, 
\begin{equation*}\label{eq.V}
	(V_gf)(z):=\int_0^z f(\xi)g'(\xi)\,d\xi,\quad f\in H(\D), \ z\in\D.
	\end{equation*}
It acts continuously in $H(\D)$ and  is \textit{injective} on $H(\D)$, \cite{AND}, but, it is  not surjective because $(V_gf)(0)=0$ for every $f\in H(\D)$.
%
%In the definition of the operators $V_g$ it can be assumed, if necessary, that $g(0)=0$. Indeed, the functions $g$ and $G(z):=g(z)-g(0)$, for $z\in\D$, define the same Volterra operator in $H(\D)$, that is, $V_G=V_g$, because $G'(z)=g'(z)$ for all $z\in\D$. For $g(z):=z$ the operator $V_g$ reduces to the classical (Volterra) integral operator.
%
 The  operators $V_g$ have been investigated on different spaces of analytic functions by many
authors. We refer to \cite{Cont-a,Si}, for example,  and the references therein. For $g(z):=z$ the operator $V_g$ reduces to the classical integration operator, also called the Volterra operator.

Related to the  operator $V_g$, with $g\in H(\D)$, is the operator $T_g\colon H(\D)\to H(\D)$ defined by \eqref{eq.CC}.
%\begin{equation}\label{eq.T}
%	(T_gf)(z):=\frac{1}{z}\int_0^zf(\xi)g'(\xi)\,d\xi,\ z\in \D\setminus\{0\},\quad (T_gf)(0):=f(0),
%\end{equation}
%for each $f\in H(\D)$.
 Clearly, $T_g\in \cL(H(\D))$. It follows from the injectivity of $V_g$ that $T_g\in \cL(H(\D))$ is also \textit{injective}; see \cite[Section 5]{ABR_JFA}. Under suitable assumptions on $g$, the operator  $T_g\in \cL(H(\D))$ is also surjective.

\begin{prop}\label{Fact 2} Let $g\in H(\D)$ satisfy $|g'|>0$ in $\D$. Then $T_g\colon H(\D)\to H(\D)$ is an isomorphism.
\end{prop}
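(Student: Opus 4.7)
The plan is to establish surjectivity via an explicit inversion formula and then appeal to the open mapping theorem for Fr\'echet spaces. Continuity of $T_g$ on $H(\D)$ and its injectivity are already recorded in the paragraph immediately preceding the proposition (injectivity of $T_g$ follows from that of $V_g$, by \cite[Section 5]{ABR_JFA}).

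For surjectivity, let $h\in H(\D)$ be arbitrary and seek $f\in H(\D)$ such that $T_gf=h$, i.e.\ such that $\int_0^z f(\xi)g'(\xi)\,d\xi=zh(z)$ for all $z\in\D$. Differentiating formally in $z$ gives $f(z)g'(z)=h(z)+zh'(z)$, which dictates the candidate
\[
f(z):=\frac{h(z)+zh'(z)}{g'(z)},\quad z\in\D.
\]
The hypothesis $|g'|>0$ on $\D$ together with $g'\in H(\D)$ ensures that $1/g'\in H(\D)$; since also $h,h'\in H(\D)$, we conclude that $f\in H(\D)$.

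The key verification is that this $f$ actually satisfies $T_gf=h$. This is a one-line integration by parts: for each $z\in\D$,
\[
\int_0^z f(\xi)g'(\xi)\,d\xi=\int_0^z h(\xi)\,d\xi+\int_0^z \xi h'(\xi)\,d\xi=zh(z),
\]
since $\int_0^z \xi h'(\xi)\,d\xi=zh(z)-\int_0^z h(\xi)\,d\xi$. Dividing by $z$ yields $(T_gf)(z)=h(z)$ for $z\in\D\setminus\{0\}$, and the analytic extension of $T_gf$ to $z=0$ agrees with $h(0)$. So $T_g$ is surjective.

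Finally, $T_g$ is a continuous linear bijection of the Fr\'echet space $H(\D)$, so the open mapping theorem for Fr\'echet spaces (applicable because $H(\D)$ is Fr\'echet, as recalled at the start of Section~2) guarantees that $T_g^{-1}$ is continuous and hence that $T_g$ is a topological isomorphism. There is no serious obstacle; the only nontrivial point is the explicit inversion formula, which is forced by differentiating the equation $zh(z)=\int_0^z f(\xi)g'(\xi)\,d\xi$ and which requires precisely the hypothesis $|g'|>0$ on $\D$ to land in $H(\D)$.
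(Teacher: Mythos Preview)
Your proof is correct and follows essentially the same route as the paper: derive the candidate $f(z)=\dfrac{h(z)+zh'(z)}{g'(z)}$ by differentiating $\int_0^z f(\xi)g'(\xi)\,d\xi=zh(z)$, note that $|g'|>0$ makes $f\in H(\D)$, and invoke the open mapping theorem for Fr\'echet spaces. The only difference is cosmetic---you add an explicit integration-by-parts verification that $T_gf=h$, whereas the paper simply asserts the candidate solves the equation (implicitly by the fundamental theorem of calculus).
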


\begin{proof} We  know that $T_g\colon H(\D)\to H(\D)$ is  continuous and injective. So, it remains to show that  $T_g\colon H(\D)\to H(\D)$ is also surjective.
	
	Fix $h\in H(\D)$ and consider the equation $T_gf=h$, where $f$ is to be determined. From the definition of  $T_g$ it follows that 
	$	\int_0^zf(\xi)g'(\xi) d\xi)=zh(z)$, for $z\in\D$.
	Differentiating both sides of the previous equation yields
	$	f(z)g'(z)=h(z)+zh'(z)$, for $z\in \D$, that is, 
	\[
	f(z)=\frac{h(z)+zh'(z)}{g'(z)},\quad z\in \D.
	\]
	Since  $|g'|>0$ in $\D$, the function $f$ defined as above belongs to $H(\D)$  and  it solves the equation $T_gf=h$. Accordingly, $T_g\colon H(\D)\to H(\D)$ is  surjective with inverse $T_g^{-1}\colon H(\D)\to H(\D)$ defined by
	\[
	(T^{-1}_gh)(z):=\frac{h(z)+zh'(z)}{g'(z)},\quad z\in \D.
	\]
	By the open mapping theorem for Fr\'echet spaces $T_g^{-1}\in \cL(H(\D))$.
\end{proof}

\subsection{Weighted Banach spaces of analytic functions } 
Let $H^\infty$ denote the space of all bounded, analytic functions on $\D$, which
 is a Banach space relative to the norm
\begin{equation}\label{eq.supnorm}
	\|f\|_\infty:=\sup_{z\in\D}|f(z)|,\quad f\in H^\infty.
\end{equation}
The disc algebra $A(\D)$ is the linear subspace of $H(\D)$  consisting  of all functions which extend to a continuous function on the closure $\overline{\D}$ of $\D$. That is, $h\in A(\D)$ if and only if there exists a (unique) function $\tilde{h}\in C(\overline{\D})$ satisfying  $h(z)=\tilde{h}(z)$ for all $z\in\D$. In particular, $h\in H^\infty$.
The space $A(\D)$ is   a proper, closed subalgebra of the abelian Banach algebra $H^\infty$.

A \textit{weight} $v$ is a continuous, non-increasing function
$v\colon [0,1)\to (0,\infty)$, which is  extended  to $\D$ by setting $v(z):=v(|z|)$, for $z\in\D$. Note that $v(z)\leq v(0)$ for all $z\in\D$. Given a weight $v$ on $[0,1)$, two associated   \textit{weighted Banach spaces of analytic functions} on $\D$ are defined  by
\[
H_v^\infty:=\{f\in H(\D)\, :\, \|f\|_{\infty,v}:=\sup_{z\in\D}|f(z)|v(z)<\infty\},
\]
and
\[
H^0_v:=\{f\in H(\D)\, :\, \lim_{|z|\to 1^-}|f(z)|v(z)=0\},
\]
both equipped with the norm $\|\cdot\|_{\infty,v}$. Noting that $\|f\|_{\infty,v}\leq v(0)\|f\|_\infty$ for all $f\in H^\infty$, it is clear that $H^\infty\su H^\infty_v$ with a continuous inclusion. Fix $r\in (0,1)$. Given $f\in H^\infty_v$ there exists $z_0\in D$ with $|z_0|=r$ such that $\sup_{|z|\leq r}|f(z)|=|f(z_0)|$ and hence, \eqref{eq.norme-sup} implies that
\[
q_r(f)=|f(z_0)|=\frac{1}{v(r)}|f(z_0)|v(r)\leq \frac{1}{v(r)}\|f\|_{\infty,v}.
\]
It follows that $H^\infty_v\subseteq H(\D)$ continuously.  Point evaluations on $\D$ belong to both $(H^\infty_v)^*$ and $(H^0_v)^*$, \cite[Lemma 1]{Sh-Wi}.
  Moreover, if $v$ satisfies  $\lim_{r\to 1^-}v(r)=0$, then the space  $H^0_v\not=\{0\}$ coincides with the closure in $H^\infty_v$ of the space  $\cP$ of all polynomials, \cite[Lemma 3]{Sh-Wi}. 
For the constant function $v(z)=1$ on $\D$, the space $H^\infty_v$  is precisely the space $H^\infty$ and $H^0_v$ reduces to $\{0\}$.

We refer  to \cite{Bo} for a recent survey of such types of weighted Banach spaces and operators between them. 

The \textit{differentiation operator} $D\colon H(\D)\to H(\D)$ is given by $f\mapsto Df:=f'$. It is routine to verify that $D\in \cL(H(\D)))$. Given weights $v$ and $w$ on $[0,1)$, both of the Banach spaces $H^\infty_v$ and $H^\infty_w$ are continuously included in $H(\D)$. Whenever $D$ satisfies $D(H^\infty_v)\subseteq H^\infty_w$ we say that $D$ exists and indicate this simply by writing $D\colon H^\infty_v\to H^\infty_w$. An application of the closed graph theorem implies that necessarily  $D\in \cL(H^\infty_v,H^\infty_w)$. A weight function $v$ on $[0,1)$ is called \textit{log-convex} if the function $x\mapsto -\log v(e^x)$, for $x\in (-\infty,0)$, is convex; see \cite[p.1145]{AT}, \cite[pp.156-157]{BBT}, \cite{Bo} and the references therein. For such weights $v$ the conditions (i)-(vi) of \cite[Lemma 2.6]{AT} are all equivalent (warning: in \cite{AT} the weight function $v$ corresponds to $1/v$ in this paper) and hence, via \cite[Theorem 2.8]{AT}, are equivalent to the continuity (i.e., existence) of $D\colon H^\infty_v\to H^\infty_w$, where $w(r):=(1-r)v(r)$, for $r\in [0,1)$. Condition (ii) of Lemma 2.6 in \cite{AT},  for $v$,  is the requirement that 
\begin{equation}\label{eq.Con-D} 
	v(r)(1-r)^{-\alpha}\ \textrm{is increasing on $[r_0,1)$ for some $\alpha>0$ and some $r_0\in (0,1)$};
	\end{equation}
see also \cite[Theorem 27]{Bo}.

The continuity and compactness of the Volterra operator $V_g$ acting  on
$H^\infty_v$ and $H^0_v$ was investigated in \cite{BCHMP}. The following few results concern the optimal domain of such operators.

\begin{prop}\label{P_H_v} Let $g\in H(\D)$ be a non-constant function,  $v\colon [0,1)\to (0,\infty)$ be a log-convex weight function satisfying \eqref{eq.Con-D} and $w(r):=(1-r)v(r)$, for $r\in [0,1)$.  
	\begin{itemize}
		\item[\rm (i)]  Suppose that the Volterra  operator $V_g\colon H^\infty_v\to H^\infty_v$ is continuous. Then $([V_g,H^\infty_v], \|\cdot\|_{[V_g,H^\infty_v]})$ is a Banach space. 
		\item[\rm (ii)] Suppose, in addition, that the weight function $v$ satisfies  the condition $\lim_{r\to 1^-}v(r)=0$ and  the Volterra operator $V_g\colon H^0_v\to H^0_v$ is continuous. Then  $([V_g,H^0_v], \|\cdot\|_{[V_g,H^0_v]})$ is a Banach space. 
	\end{itemize}
\end{prop}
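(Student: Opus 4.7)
Since $V_g$ is injective on $H(\D)$, Proposition~2.4(ii) of \cite{ABR_JFA} already guarantees that $\|\cdot\|_{[V_g,H^\infty_v]}$ and $\|\cdot\|_{[V_g,H^0_v]}$ are bona fide norms. The task is therefore to establish completeness.

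For part (i) my plan is to take a Cauchy sequence $(f_n)_{n\in\N}$ in $[V_g,H^\infty_v]$, so that $(V_gf_n)_{n\in\N}$ is Cauchy in $H^\infty_v$ and converges to some $G\in H^\infty_v$; by continuity of the inclusion $H^\infty_v\hookrightarrow H(\D)$, also $V_gf_n\to G$ in $H(\D)$. The main step is to exhibit an $f\in H(\D)$ with $V_gf=G$; once this is done, $f\in [V_g,H^\infty_v]$ by definition and
\[
\|f-f_n\|_{[V_g,H^\infty_v]}=\|G-V_gf_n\|_{\infty,v}\to 0.
\]
The natural candidate is $f:=G'/g'$, which is a priori only meromorphic on $\D$ because the zero set $Z$ of the non-trivial analytic function $g'$ is discrete but possibly non-empty.

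The heart of the argument is to show these apparent singularities are removable. Since differentiation is continuous on $H(\D)$, the convergence $V_gf_n\to G$ in $H(\D)$ yields $f_ng'=(V_gf_n)'\to G'$ in $H(\D)$. Fix $z_0\in Z$ and let $m\geq 1$ be the order of vanishing of $g'$ at $z_0$. For every $n$ the product $f_ng'$ vanishes to order at least $m$ at $z_0$, i.e., $(f_ng')^{(k)}(z_0)=0$ for $k=0,\ldots,m-1$. Each functional $h\mapsto h^{(k)}(z_0)$ is continuous on $H(\D)$, so passing to the limit gives $G^{(k+1)}(z_0)=0$ for $k=0,\ldots,m-1$, meaning $G'$ vanishes to order at least $m$ at $z_0$. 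Consequently $G'/g'$ has a removable singularity at every point of $Z$ and defines an element $f\in H(\D)$. Then $(V_gf)'=fg'=G'$ on $\D\setminus Z$, hence on all of $\D$ by analyticity, and $(V_gf)(0)=0=\lim(V_gf_n)(0)=G(0)$, so $V_gf=G$ as required.

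For part (ii), a Cauchy sequence $(f_n)$ in $[V_g,H^0_v]$ is also Cauchy in $[V_g,H^\infty_v]$, so by (i) there exists $f\in H(\D)$ with $V_gf_n\to V_gf$ in $H^\infty_v$. A standard $\varepsilon/2$ argument (using that $|h(z)|v(z)\to 0$ as $|z|\to 1^-$ is a closed condition under uniform convergence in $H^\infty_v$) shows $H^0_v$ is closed in $H^\infty_v$; since each $V_gf_n\in H^0_v$, the limit $V_gf$ lies in $H^0_v$, giving $f\in [V_g,H^0_v]$ and $f_n\to f$ in its norm. The main obstacle is clearly the reconstruction in (i): naively setting $f:=G'/g'$ yields only a meromorphic function, and the removal of singularities at the zeros of $g'$ rests precisely on the observation that each $f_ng'$ inherits the vanishing order of $g'$ and that point-derivative evaluations are continuous on $H(\D)$.
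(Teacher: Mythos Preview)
Your proof is correct, but it follows a genuinely different route from the paper's. The paper invokes Proposition~2.7 of \cite{ABR_JFA}, which reduces completeness of $[V_g,H^\infty_v]$ to continuity of the inclusion $[V_g,H^\infty_v]\hookrightarrow H(\D)$; it then verifies this continuity by using the boundedness of $D\colon H^\infty_v\to H^\infty_w$ (which is where log-convexity and condition~\eqref{eq.Con-D} enter) together with the fact that the isolated zeros of $g'$ can be avoided on some circle $|z|=t$ for each $r<t<1$, yielding a direct estimate of $q_r(f)$ in terms of $\|f\|_{[V_g,H^\infty_v]}$. Your argument instead works directly with Cauchy sequences and recovers the limit $f$ as $G'/g'$, handling the zeros of $g'$ via a removable-singularities step. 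The notable point is that your argument never uses the hypotheses on $v$: it goes through for \emph{any} weight, so you have in fact proved more than the stated proposition requires. What the paper's approach buys in exchange is that it explicitly establishes the continuous embedding $[V_g,H^\infty_v]\hookrightarrow H(\D)$, i.e., that the optimal domain is itself a Banach space of analytic functions on $\D$---a property used repeatedly later; your route gives this only a posteriori, via a closed-graph argument once completeness is known. For part~(ii) your reduction to~(i) via closedness of $H^0_v$ in $H^\infty_v$ coincides with the alternative the paper records in Remark~\ref{R.ContV_g_H}.
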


\begin{proof} (i) By Proposition 2.7 of \cite{ABR_JFA} to establish that $[V_g,H^\infty_v]$ is a Banach space,  it suffices to show that the natural inclusion map $\mathscr{J}\colon [V_g, H^\infty_v]\to H(\D)$, defined by  $f\mapsto \mathscr{J}f:=f$, is continuous. To achieve this we proceed as in the proof of \cite[Proposition 3.3]{ABR_JFA}. By continuity of $D\colon H^\infty_v\to H^\infty_w$ (see the discussion prior to this proposition),  there exists $A>0$ such that
	\begin{equation}\label{eq.Der-1}
		\|h'\|_{\infty,w}=\|Dh\|_{\infty,w}\leq A\|h\|_{\infty,v},\quad h\in H^\infty_v.
	\end{equation}
	Fix any $r\in (0,1)$. Now, $g'$ is not identically zero on $\D$ and the zeros of $g'\in H(\D)$ are isolated. Hence,  there is $t\in (r,1)$ for which $g'(z)\not=0$, for every $z\in \D$ satisfying $|z|=t$ and so $M:=\min_{|z|=t}|g'(z)|>0$. Given $f\in [V_g,H^\infty_v]$, 
	it follows, for all $u\in\D$ satisfying $|u|\leq r$, that
	\begin{equation}\label{eq,st-1}
		|f(u)|\leq \max_{|z|=t}|f(z)|=\max_{|z|=t}\frac{|f(z)g'(z)|}{|g'(z)|}\leq \frac{1}{M}\max_{|z|=t}|f(z)g'(z)|.
	\end{equation}
	Since
	$f(z)g'(z)=(V_gf)'(z)$, for $z\in\D$,
	the inequality \eqref{eq.Der-1} implies that
	\begin{equation}\label{eq.Deri-1}
		\|fg'\|_{\infty,w}=	\|(V_gf)'\|_{\infty,w}\leq A\|V_gf\|_{\infty,v}=A\|f\|_{[V_g, H^\infty_v]}.
	\end{equation}
	Combining \eqref{eq,st-1} and \eqref{eq.Deri-1} we can conclude, for all $u\in\D$ with $|u|\leq r$, that
	\begin{align*}
		|f(u)|\leq & \frac{1}{M}\max_{|z|=t}|(V_gf)'(z)|
		=\frac{1}{M}\frac{1}{w(t)}\max_{|z|=t}w(z)|(V_gf)'(z)|\\
		\leq & \frac{1}{M}\frac{1}{w(t)}\sup_{z\in\D}w(z)|(V_gf)'(z)|
		\leq  \frac{A}{Mw(t)}\sup_{z\in\D}v(z)|(V_gf)(z)|\\
		=& \frac{A}{Mw(t)}\|V_gf\|_{\infty,v}=\frac{A}{Mw(t)}\|f\|_{[V_g,H^\infty_v]}.
	\end{align*}
	According to \eqref{eq.norme-sup} this implies that
	\[
	q_r(f)=\sup_{|u|\leq r}|f(u)|\leq \frac{A}{Mw(t)}\|f\|_{[V_g,H^\infty_v]}.
	\]
	But, $r\in (0,1)$ and $f\in [V_g, H^\infty_v]$ are arbitrary and so  the inclusion map $\mathscr{J}\colon [V_g, H^\infty_v]\to H(\D)$ is continuous, as required.

	(ii) We first note that $D\colon H^0_v\to H^0_w$ is also continuous. This  follows from the continuity of $D\colon H^\infty_v\to H^\infty_w$, the fact noted above that the space $\cP$ of all polynomials is dense in $H^0_v$ and that $D(\cP)\subseteq \cP$. The proof then proceeds as in  part (i).
\end{proof}

\begin{remark}\label{R.Hinfinito}\rm
	(i) A log-convex weight function $v$ need \textit{not} satisfy $\lim_{r\to 1^-}v(r)=0$ (assumed in part (ii) in Proposition \ref{P_H_v}); consider $v(r):=e^{-r}$, for $r\in [0,1)$. 
	
	(ii) Let $v\colon [0,1)\to (0,\infty)$ be any log-convex weight function satisfying \eqref{eq.Con-D} and $w(r):=(1-r)v(r)$, for $r\in [0,1)$. Since  $D\colon H^\infty_v\to H^\infty_w$ is continuous and $H^\infty\subseteq H^\infty_v$ with a continuous inclusion, the restricted differentiation operator $D\colon H^\infty\to H^\infty_w$ is also continuous. So,  in the proof of Proposition \ref{P_H_v}(i) we can replace $\|\cdot\|_{\infty,v}$ throughout  with $\|\cdot\|_\infty$ to obtain, whenever  $V_g\colon H^\infty\to H^\infty$ is continuous,  that  $([V_g, H^\infty],\|\cdot\|_{[V_g,H^\infty]})$ is a Banach space.% for every non-constant function $g\in H(\D)$.
\end{remark}

\begin{remark}\label{R.ContV_g_H}\rm  Let $g\in H(\D)$ be a non-constant function and $v\colon [0,1)\to (0,\infty)$ be a weight function satisfying $\lim_{r\to 1^-}v(r)=0$ for which the operator $V_g\colon H^0_v\to H^0_v$ is continuous. Then $(V_g)^{**}=V_g$, \cite[Lemma 1]{BCHMP}, and $(H^0_v)^{**}=H^\infty_v$, \cite[Example 2.1]{BS}, imply that $V_g\colon H^\infty_v\to H^\infty_v$ is also continuous. So, under the assumptions of Proposition \ref{P_H_v}, the proof of part (ii) follows by observing that $([V_g,H^0_v],\|\cdot\|_{[V_g,H^0_v]})$ is a closed subspace of $([V_g,H^\infty_v],\|\cdot\|_{[V_g,H^\infty_v]})$ because $H^0_v$ is a closed, $V_g$-invariant subspace of $H^\infty_v$, \cite[Proposition 2.4(v)]{ABR_JFA}. 
\end{remark}

\begin{prop}\label{P.H_vcontained} Let $g\in H(\D)$ be a non-constant function and $v\colon [0,1)\to (0,\infty)$ be a log-convex weight function satisfying both condition \eqref{eq.Con-D} and $\lim_{r\to 1^-}v(r)=0$. Whenever the operator $V_g\colon H^0_v\to H^0_v$ is compact,  $H^\infty_v$ is a proper subspace of $[V_g,H^0_v]$.
\end{prop}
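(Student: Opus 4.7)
The plan is to apply Corollary \ref{C1}(iii) with $X := H^0_v$ and $Y := H^\infty_v$, taking the injective operator $T := V_g \in \cL(H(\D))$. The inclusion $H^0_v \subseteq H^\infty_v$ is continuous (with the same norm $\|\cdot\|_{\infty,v}$), both spaces are infinite-dimensional (since both contain all polynomials, using $\lim_{r\to 1^-}v(r)=0$ for $H^0_v$), and by Proposition \ref{P_H_v}(ii) the space $([V_g,H^0_v],\|\cdot\|_{[V_g,H^0_v]})$ is a Banach space, because compactness of $V_g\colon H^0_v\to H^0_v$ in particular implies continuity. The only substantive point left to verify is that $V_g(H^\infty_v)\subseteq H^0_v$ and that the resulting map $V_g\colon H^\infty_v\to H^0_v$ is compact.

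To establish this, I exploit the bidual identifications recalled in Remark \ref{R.ContV_g_H}: $(H^0_v)^{**}=H^\infty_v$ isometrically and $V_g^{**}$ coincides with $V_g\colon H^\infty_v\to H^\infty_v$ under this identification. Since $V_g\colon H^0_v\to H^0_v$ is compact, Schauder's theorem implies that its bidual $V_g^{**}\colon H^\infty_v\to H^\infty_v$ is again compact. To see that the image actually lies in $H^0_v$, fix $f\in H^\infty_v$; Goldstine's theorem supplies a bounded net $(f_\alpha)\subseteq H^0_v$ with $f_\alpha\to f$ in the weak-$*$ topology $\sigma(H^\infty_v,(H^0_v)^*)$. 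By compactness of $V_g$ on $H^0_v$, a subnet of $(V_gf_\alpha)$ converges in norm, in $H^0_v$, to some $h\in H^0_v$. Weak-$*$ continuity of $V_g^{**}$ with respect to $\sigma(H^\infty_v,(H^0_v)^*)$ then forces $h=V_g^{**}f=V_gf$, so that $V_gf\in H^0_v$. Consequently $V_g\colon H^\infty_v\to H^0_v$ is a corestriction of the compact operator $V_g^{**}$ and is therefore itself compact.

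With all hypotheses of Corollary \ref{C1}(iii) verified, the conclusion $H^\infty_v\varsubsetneqq [V_g,H^0_v]$ follows directly. The main obstacle is the second step: transferring the compactness hypothesis from $V_g$ on $H^0_v$ to $V_g$ viewed as a map out of the bidual $H^\infty_v$ whose range still lies in $H^0_v$. The identifications in Remark \ref{R.ContV_g_H} together with a standard Goldstine-plus-compactness argument handle this cleanly, after which the general principle recorded in Corollary \ref{C1}(iii) supplies the strict inclusion essentially for free.
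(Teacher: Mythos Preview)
Your proof is correct and follows essentially the same route as the paper's: both apply Corollary \ref{C1}(iii) with $X=H^0_v$, $Y=H^\infty_v$, after using the bidual identification $(H^0_v)^{**}=H^\infty_v$ and $V_g^{**}=V_g$ from Remark \ref{R.ContV_g_H} to show that $V_g\colon H^\infty_v\to H^0_v$ is compact. The only difference is cosmetic---the paper cites K\"othe \cite[\S 42.2(1),(2)]{24} for the fact that the bidual of a compact operator lands back in the original space, whereas you supply the standard Goldstine-plus-compactness argument directly.
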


\begin{proof} Since $V_g\colon H^0_v\to H^0_v$ is compact (by assumption) and  $(V_g)^{**}=V_g\colon H^\infty_v\to H^\infty_v$ (cf. Remark \ref{R.ContV_g_H}),  via \cite[\S 42.2(1) and (2)]{24} we can conclude that $V_g(H^\infty_v)\subseteq H^0_v$. Accordingly, $V_g\colon H^\infty_v\to H^0_v$ is also compact.
	On the other hand, the assumptions on $v$ ensure that   $D\colon H^\infty_v\to H^\infty_w$  is continuous, where $w(r):=(1-r)v(r)$, for $r\in [0,1)$, and so 
	  Proposition \ref{P_H_v}(ii) implies  that $([V_g,H^0_v],\|\cdot\|_{[V_g,H^0_v]})$ is a Banach space. So, by Corollary \ref{C1}(iii) we can conclude that $H^\infty_v$ is a proper subspace of $[V_g, H^0_v]$.
\end{proof}

The \textit{integration operator} $J\colon H(\D)\to H(\D)$ is defined by $f\mapsto (Jf)(z):=\int_0^zf(\xi)d\xi$. It is routine to verify that $J\in \cL(H(\D))$. Given weights $v$ and $w$ on $[0,1)$ we write $J\colon H^\infty_w\to H^\infty_v$ whenever $J(H^\infty_w)\subseteq H^\infty_v$ and say that $J$ exists. By the closed graph theorem this is equivalent to $J\in \cL(H^\infty_w,H^\infty_v)$. We point out that
\begin{equation}\label{eq.BB}
	D(Jh)=h, \quad \forall h\in H(\D),
\end{equation}
and, for every $g\in H(\D)$, that
\begin{equation}\label{eq.C}
	V_gf=J(fg'),\quad \forall f\in H(\D).
	\end{equation}
Corollary 3.2 in \cite{AT} gives sufficient conditions on $v$ and $w$ to ensure that $J\colon H^\infty_w\to H^\infty_v$ is continuous. Let $v$ be a log-convex weight and $w(r):=(1-r)v(r)$ on $[0,1)$. The  condition
\begin{equation}\label{eq.D}
	v(r)(1-r)^{-\alpha}\ \textrm{is decreasing on $[r_0,1)$ for some $\alpha>0$ and some $r_0\in (0,1)$}
	\end{equation}
is equivalent to the continuity of $J\colon H^\infty_w\to H^\infty_v$, \cite[Lemma 3.15 \& Theorem 3.16]{AT}; see also Theorem 30 in \cite{Bo}, where the function $w$ should be $w(r):=(1-r)v(r)$ on $r\in [0,1)$ and not as given there.

\begin{prop}\label{P.Opt_H_v}  Let $g\in H(\D)$ satisfy $g', \frac{1}{g'}\in H^\infty$ and $v\colon [0,1)\to (0,\infty)$ be a log-convex weight function satisfying both \eqref{eq.Con-D} and \eqref{eq.D}.  
	\begin{itemize}
		\item[\rm (i)] Let $w(r):=(1-r)v(r)$, for $r\in [0,1)$. Then both  operators $D\colon H^\infty_v\to H^\infty_w$ and $J\colon H^\infty_w\to H^\infty_v$ are continuous.
		\item[\rm (ii)] Whenever the operator $V_g\colon H^\infty_v\to H^\infty_v$ is continuous,  the optimal domain $[V_g,H^\infty_v]=H^\infty_w$ as linear spaces in $H(\D)$ and with equivalent norms. 
		\item[\rm (iii)] Suppose, in addition, that $\lim_{r\to 1^-}v(r)=0$ and the operator $V_g\colon H^0_v\to H^0_v$ is continuous. Then the optimal domain $[V_g,H^0_v]=H^0_w$ as linear spaces in $H(\D)$ and with equivalent norms. 
	\end{itemize}
\end{prop}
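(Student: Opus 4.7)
\smallskip
\noindent\textbf{Proof plan.}

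Part (i) is essentially a packaging of the equivalences already recalled in the excerpt. Condition \eqref{eq.Con-D} is equivalent (via \cite[Lemma 2.6, Theorem 2.8]{AT}, cf.\ \cite[Theorem 27]{Bo}) to the continuity of $D\colon H^\infty_v\to H^\infty_w$, and condition \eqref{eq.D} is equivalent (via \cite[Lemma 3.15, Theorem 3.16]{AT}, cf.\ \cite[Theorem 30]{Bo}) to the continuity of $J\colon H^\infty_w\to H^\infty_v$. So (i) is a direct invocation of those results.

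The core of part (ii) is the two identities $D(V_g f)=fg'$ and $V_g f=J(fg')$ (the latter is precisely \eqref{eq.C}), combined with the fact that, since $g',\frac{1}{g'}\in H^\infty$, multiplication by $g'$ and by $\frac{1}{g'}$ are continuous linear self-maps of $H^\infty_w$ (with norms controlled by $\|g'\|_\infty$ and $\|1/g'\|_\infty$ respectively). First I would show the inclusion $[V_g,H^\infty_v]\subseteq H^\infty_w$: for $f\in [V_g,H^\infty_v]$ one has $V_gf\in H^\infty_v$, hence by part (i) $fg'=D(V_gf)\in H^\infty_w$, and then $f=\frac{1}{g'}\cdot(fg')\in H^\infty_w$ with norm estimate
\[
\|f\|_{\infty,w}\leq \|1/g'\|_\infty\, \|D\|_{H^\infty_v\to H^\infty_w}\, \|f\|_{[V_g,H^\infty_v]}.
\]
For the reverse inclusion $H^\infty_w\subseteq [V_g,H^\infty_v]$: given $f\in H^\infty_w$, one has $fg'\in H^\infty_w$, so $V_gf=J(fg')\in H^\infty_v$ by part (i), with the estimate
\[
\|f\|_{[V_g,H^\infty_v]}=\|J(fg')\|_{\infty,v}\leq \|J\|_{H^\infty_w\to H^\infty_v}\, \|g'\|_\infty\, \|f\|_{\infty,w}.
\]
The two continuous inclusions give the equality of spaces together with norm equivalence.

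For part (iii) the strategy is identical, but in the little-oh setting. The auxiliary fact I need is that $D\colon H^0_v\to H^0_w$ and $J\colon H^0_w\to H^0_v$ are both continuous. The first was already noted in the proof of Proposition \ref{P_H_v}(ii); for the second, since $v$ is non-increasing and $\lim_{r\to 1^-}v(r)=0$, the weight $w(r)=(1-r)v(r)$ also satisfies $\lim_{r\to 1^-}w(r)=0$, so polynomials are dense in $H^0_w$, and $J$ maps polynomials to polynomials, which together with the continuity of $J\colon H^\infty_w\to H^\infty_v$ forces $J(H^0_w)\subseteq H^0_v$. Finally I note that multiplication by $g'$ and by $\frac{1}{g'}$ preserves $H^0_w$ (since $|h(z)/g'(z)|w(z)\leq \|1/g'\|_\infty |h(z)|w(z)\to 0$ as $|z|\to 1^-$ for $h\in H^0_w$, and similarly with $g'$). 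With these ingredients in hand, the two-sided inclusion argument of part (ii) transfers verbatim to the pair $H^0_v$, $H^0_w$, yielding $[V_g,H^0_v]=H^0_w$ with equivalent norms.

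No step here is genuinely difficult; the main point I would emphasize is the clean algebraic dictionary $D\circ V_g=M_{g'}$ and $V_g=J\circ M_{g'}$, which reduces the description of the optimal domain to a routine composition of the four continuous maps $D$, $J$, $M_{g'}$, $M_{1/g'}$.
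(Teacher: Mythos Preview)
Your proposal is correct and follows essentially the same route as the paper: both use the identities $D(V_gf)=fg'$ and $V_gf=J(fg')$ together with the continuity of $D$, $J$ and the multiplication operators $M_{g'}$, $M_{1/g'}$ to establish the two inclusions, and for (iii) both reduce to the little-oh case by noting $D\colon H^0_v\to H^0_w$, $J\colon H^0_w\to H^0_v$ are continuous (the latter via density of polynomials). The only minor difference is that you obtain the norm equivalence in (ii) directly from the two explicit continuous inclusions, whereas the paper proves only one norm inequality and then invokes the open mapping theorem (using that $[V_g,H^\infty_v]$ is a Banach space by Proposition~\ref{P_H_v}); your version is slightly more self-contained.
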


\begin{proof} (i) It has already been noted that continuity of $D\colon H^\infty_v\to H^\infty_w$ (resp. of $J\colon H^\infty_w\to H^\infty_v$) is equivalent to \eqref{eq.Con-D} (resp. \eqref{eq.D}).
	
	(ii) Fix $h\in H^\infty_w$. Then $hg'\in H^\infty_w$ (because $g'\in H^\infty$) and hence, by \eqref{eq.C},  $V_gh=J(hg')\in H^\infty_v$. Accordingly, $h\in [V_g,H^\infty_v]$. Moreover,
	\begin{align}\label{eq.3}
		\|h\|_{[V_g,H^\infty_v]}&=\|V_gh\|_{\infty,v}=\|J(hg')\|_{\infty,v}\leq \|J\|_{H^\infty_w\to H^\infty_v}\|hg'\|_{\infty,w}\nonumber\\
		&\leq \|J\|_{H^\infty_w\to H^\infty_v}\|g'\|_{\infty}\|h\|_{\infty,w}.
	\end{align}
	It follows that $H^\infty_w\subseteq [V_g,H^\infty_v]$ with a continuous inclusion.
	
	Conversely, let $f\in [V_g,H^\infty_v]$, that is,  $f\in H(\D)$ and $V_gf=J(fg')\in H^\infty_v$. According to \eqref{eq.BB} we have $fg'=D(J(fg'))$ and hence, $fg'\in H^\infty_w$ as $J(fg')\in H^\infty_v$ and $D\colon H^\infty_v\to H^\infty_w$. Since $\frac{1}{g'}\in H^\infty$, it follows that also $f=\frac{1}{g'}\cdot (fg')\in H^\infty_w$. Therefore, $ [V_g,H^\infty_v]\subseteq H^\infty_w$. So, $ H^\infty_w=[V_g,H^\infty_v]$ as linear spaces.
	
	Since both $([V_g, H^\infty], \|\cdot\|_{[V_g, H^\infty_v]})$ and $H^\infty_w$ are Banach spaces (cf. Proposition \ref{P_H_v}(i)) and the inclusion $H^\infty_w\subseteq [V_g,H^\infty_v]$ is continuous and surjective,  the open mapping theorem implies that $H^\infty_w=[V_g,H^\infty_v]$ have equivalent norms.
	
	(iii) The result follows as in part (i), after observing that both of the  operators $D\colon H^0_v\to H^0_w$ and $J\colon H^0_w\to H^0_v$ are continuous. Indeed, for the continuity of $D\colon H^0_v\to H^0_w$ see the proof of Proposition \ref{P_H_v}(ii). The continuity of $J\colon H^0_w\to H^0_v$ is a consequence of  the facts that the integration operator $J\colon H^\infty_w\to H^\infty_v$ is  continuous, that the space $\cP$ of all polynomials is dense in $H^0_w$ and that $J(\cP)\subseteq \cP$.
\end{proof}

\begin{corollary}\label{C.Opt_H_v}   Let $v\colon [0,1)\to (0,\infty)$ be a log-convex weight function satisfying both \eqref{eq.Con-D} and \eqref{eq.D},  $w(r):=(1-r)v(r)$, for $r\in [0,1)$, and $g\in H(\D)$ satisfy $g', \frac{1}{g'}\in H^\infty$.
	\begin{itemize}
		\item[\rm (i)]  Whenever the operator $V_g\colon H^\infty_v\to H^\infty_v$ is continuous, the optimal domains $[V_g,H^\infty_v]=[T_g,H^\infty_v]=H^\infty_w$ as linear spaces in $H(\D)$ and with equivalent norms. 
		\item[\rm (ii)] Suppose that $\lim_{r\to 1^-}v(r)=0$ and the operator $V_g\colon H^0_v\to H^0_v$ is continuous. Then the optimal domains $[V_g,H^0_v]=[T_g,H^0_v]=H^0_w$ as linear spaces in $H(\D)$ and with equivalent norms. 
		\item[\rm (iii)] Suppose, in addition, that $\lim_{r\to 1^-}v(r)=0$ and the operator $V_g\colon H^0_v\to H^0_v$ is compact. Then $H^\infty_v$ is a proper subspace of  $[V_g,H^0_v]=[T_g,H^0_v]=H^0_w$
	\end{itemize}
\end{corollary}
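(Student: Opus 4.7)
The plan is to assemble all three parts of the corollary from two ingredients already in hand: Proposition \ref{P.Opt_H_v}, which identifies the optimal domain of $V_g$ alone with the appropriate ``shifted'' weighted space, and the equivalence recalled from \cite{ABR_JFA} in the Introduction, which asserts that for the spaces $H^\infty_v$ (respectively $H^0_v$) the continuity of $V_g$ is equivalent to that of $T_g$ and, when both hold, $[V_g,H^\infty_v]=[T_g,H^\infty_v]$ (respectively $[V_g,H^0_v]=[T_g,H^0_v]$).

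For part (i), the hypotheses on $g$ and $v$ are exactly those of Proposition \ref{P.Opt_H_v}, so that proposition yields $[V_g,H^\infty_v]=H^\infty_w$ as linear spaces with equivalent norms. Since $V_g\colon H^\infty_v\to H^\infty_v$ is assumed continuous, the cited result from \cite{ABR_JFA} provides the further identification $[V_g,H^\infty_v]=[T_g,H^\infty_v]$, completing (i). Part (ii) proceeds identically, replacing $H^\infty_v$ by $H^0_v$ and $H^\infty_w$ by $H^0_w$, and invoking Proposition \ref{P.Opt_H_v}(iii) in place of (ii).

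For part (iii), I would first observe that compactness of $V_g\colon H^0_v\to H^0_v$ implies its continuity, so the hypotheses of (ii) are satisfied and already yield $[V_g,H^0_v]=[T_g,H^0_v]=H^0_w$. The remaining assertion is the strict inclusion $H^\infty_v\subsetneqq H^0_w$, and for this I would appeal directly to Proposition \ref{P.H_vcontained}: its assumptions (log-convexity of $v$, condition \eqref{eq.Con-D}, $\lim_{r\to 1^-}v(r)=0$, and compactness of $V_g$ on $H^0_v$) are all in force, so it gives $H^\infty_v\subsetneqq [V_g,H^0_v]$. Combining with the identification from (ii) yields the strict inclusion asserted in (iii).

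Since every ingredient is already established, there is essentially no technical obstacle beyond careful bookkeeping: one needs only to verify that the hypotheses of Proposition \ref{P.Opt_H_v} (and, for part (iii), of Proposition \ref{P.H_vcontained}) are available at each stage, and that the equivalence from \cite{ABR_JFA} is being applied in both the $H^\infty_v$ and the $H^0_v$ settings. The most delicate point is simply ensuring that compactness in part (iii) is correctly strengthened to feed Proposition \ref{P.H_vcontained} while simultaneously providing the continuity needed for part (ii).
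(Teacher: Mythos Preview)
Your proposal is correct and follows essentially the same route as the paper: the paper invokes \cite[Proposition 5.2]{ABR_JFA} together with Proposition~\ref{P.Opt_H_v} for parts (i) and (ii), and Proposition~\ref{P.H_vcontained} together with part (ii) for part (iii), which is exactly what you outline.
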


\begin{proof}
Part	(i) follows by applying  \cite[Proposition 5.2(i)-(ii)]{ABR_JFA} and Proposition \ref{P.Opt_H_v}(i).
	
Part	(ii) follows via \cite[Proposition 5.2(iii)-(iv)]{ABR_JFA} and Proposition \ref{P.Opt_H_v}(ii).
	
Part	(iii) follows by applying  Proposition \ref{P.H_vcontained} and part (ii).
\end{proof}
%%%%%%%%%%

%%%%%%%%

For each $\gamma>0$, the spaces $H^\infty_v$ and $H^0_v$, for the log-convex weight function $v_\gamma(r) := (1-r)^\gamma$, for $r\in [0, 1)$, are denoted by $A^{-\gamma}$ and $A^{-\gamma}_0$, respectively and, as already mentioned, are known as the \textit{Korenblum growth Banach spaces (of order $\gamma$)}.

Related to $A^{-\gamma}$ and $A^{-\gamma}_0$ are certain \textit{Bloch spaces}. A function $h\in H(\D)$  belongs to the \textit{Bloch space} $\cB$ whenever
$h'\in  A^{-1}$, that is, $\sup_{z\in\D}(1-|z|)|h'(z)|<\infty$, in which case  $\cB$ is a  Banach space of analytic  functions on $\D$ relative to the norm
\begin{equation}\label{eq.normbloch}
	\|h\|_{\cB}:=|h(0)|+\sup_{z\in\D}(1-|z|)|h'(z)|.
\end{equation}
A function $h\in\cB$ belongs to the \textit{little Bloch space} $\cB_0$ if it satisfies
\begin{equation}\label{eq.lim}
	\lim_{|z|\to 1^-}(1-|z|)|h'(z)|=0.
\end{equation}
It is known that $\cB_0$  is a closed subspace of  $\cB$ and  hence, $\cB_0$  is a  Banach space when it is equipped with the norm  \eqref{eq.normbloch}. Moreover, the bidual $\cB_0^{**}=\cB$. For various properties of Bloch spaces we refer to \cite{ACP}, \cite{Z}, for example.

It was noted in Section 1 that 
a detailed study of the optimal domain spaces $[V_g, A^{-\gamma}]$ and $[V_g, A^{-\gamma}_0]$ is undertaken in \cite[Section 3]{ABR_JFA}.
For $g\in \cB$ a non-constant function and for every $\gamma>0$,  both $[V_g,A^{-\gamma}]$ and $[V_g, A^{-\gamma}_0]$ are Banach spaces, \cite[Proposition 3.3]{ABR_JFA}.

For each $\gamma>0$ and with $v_\gamma(r) := (1-r)^\gamma$ on $[0, 1)$ we note that $v_\gamma(r)(1-r)^{-\frac{\gamma}{2}}=(1-r)^{\frac{\gamma}{2}}$ is increasing on $[0,1)$ and $v_\gamma(r)(1-r)^{-2\gamma}=(1-r)^{-\gamma}$ is decreasing on $(0,1)$. Hence, $v_\gamma$ satisfies \textit{both} \eqref{eq.Con-D} and \eqref{eq.D}. Moreover, the associated weight $w_\gamma(r):=(1-r)v_\gamma(r)=(1-r)^{\gamma+1}$ on $[0,1)$. So, it follows from Proposition \ref{P.Opt_H_v}(i) that both 
 the differentiation operator $D\colon  A^{-\gamma}\to A^{-(\gamma+1)}$ and the integration operator $J\colon  A^{-(\gamma+1)}\to A^{-\gamma}$
are continuous. These are classical results of Hardy and Littlewood; see \cite[Theorem 5.5]{Du}. These facts are well known and we will refer to them as needed.

The following  result should be compared with \cite[Proposition 5.3]{ABR_JFA}, after noting that $g\in \cB$ (resp. $g\in \cB_0$) is equivalent to  continuity of each of the operators $V_g\colon A^{-\gamma}\to A^{-\gamma}$ and $V_g\colon A^{-\gamma}_0\to A^{-\gamma}_0$ (resp. to compactness of each of the operators $V_g\colon A^{-\gamma}\to A^{-\gamma}$ and $V_h\colon A^{-\gamma}_0\to A^{-\gamma}_0$); see \cite[Theorems 1 \& 2]{BCHMP}, \cite[Proposition 3.1]{Ma}.

\begin{corollary}\label{C.Opt_Kor} Let $\gamma>0$ and  $g\in \cB$ be a non-constant function satisfying $g', \frac{1}{g'}\in H^\infty$. 
	\begin{itemize}
		\item[\rm (i)] The optimal domains  $[V_g,A^{-\gamma}]=[T_g,A^{-\gamma}]=A^{-(\gamma+1)}$ as linear spaces on $H(\D)$ and with equivalent norms.
		\item[\rm (ii)] The optimal domains  $[V_g,A^{-\gamma}_0]=[T_g,A^{-\gamma}_0]=A^{-(\gamma+1)}_0$ as linear spaces on $H(\D)$ and with equivalent norms.
	\end{itemize}
\end{corollary}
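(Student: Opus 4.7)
The plan is to obtain Corollary \ref{C.Opt_Kor} as a direct specialization of Corollary \ref{C.Opt_H_v} to the weight family $v_\gamma(r):=(1-r)^\gamma$, $r\in[0,1)$. So the first task is to verify that $v_\gamma$ fulfills all the hypotheses required by Corollary \ref{C.Opt_H_v}, and the second is to match the operator-continuity assumption using the assumption $g\in\cB$.

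For Step 1 (checking hypotheses on $v_\gamma$), one observes that $x\mapsto -\log v_\gamma(e^x)=-\gamma\log(1-e^x)$ has positive second derivative on $(-\infty,0)$, so $v_\gamma$ is log-convex. Moreover $v_\gamma(r)(1-r)^{-2\gamma}=(1-r)^{-\gamma}$ is increasing near $1$, which yields \eqref{eq.Con-D}, while $v_\gamma(r)(1-r)^{-\gamma/2}=(1-r)^{\gamma/2}$ is decreasing near $1$, which yields \eqref{eq.D}. Clearly $\lim_{r\to 1^-}v_\gamma(r)=0$. Finally the associated weight $w_\gamma(r)=(1-r)v_\gamma(r)=(1-r)^{\gamma+1}=v_{\gamma+1}(r)$ identifies
\[
H^\infty_{w_\gamma}=A^{-(\gamma+1)},\qquad H^0_{w_\gamma}=A^{-(\gamma+1)}_0,
\]
as Banach spaces with identical norms.

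For Step 2 (operator continuity), the remark immediately preceding the corollary records that $g\in\cB$ is equivalent to the continuity of $V_g\colon A^{-\gamma}\to A^{-\gamma}$ and, simultaneously, to the continuity of $V_g\colon A^{-\gamma}_0\to A^{-\gamma}_0$. Since by hypothesis $g\in\cB$ with $g',1/g'\in H^\infty$, both continuity conditions in Corollary \ref{C.Opt_H_v} are at our disposal.

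Putting these pieces together, Corollary \ref{C.Opt_H_v}(i) applied with $v=v_\gamma$ gives
\[
[V_g,A^{-\gamma}]=[T_g,A^{-\gamma}]=H^\infty_{w_\gamma}=A^{-(\gamma+1)}
\]
as linear subspaces of $H(\D)$ and with equivalent norms, which is (i); and Corollary \ref{C.Opt_H_v}(ii) applied with $v=v_\gamma$ gives
\[
[V_g,A^{-\gamma}_0]=[T_g,A^{-\gamma}_0]=H^0_{w_\gamma}=A^{-(\gamma+1)}_0
\]
with equivalent norms, which is (ii). No real obstacle arises: the content is entirely in Corollary \ref{C.Opt_H_v}, and the only work is the routine bookkeeping of checking that the power weight $v_\gamma$ satisfies the monotonicity and log-convexity properties needed to trigger that result.
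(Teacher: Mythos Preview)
Your proof is correct and follows essentially the same route as the paper: verify that $v_\gamma$ is log-convex and satisfies \eqref{eq.Con-D}, \eqref{eq.D} with $\lim_{r\to 1^-}v_\gamma(r)=0$, note that $g\in\cB$ gives the required continuity of $V_g$ on $A^{-\gamma}$ and $A^{-\gamma}_0$, and then apply Corollary~\ref{C.Opt_H_v}. Your assignment of exponents to the two monotonicity conditions is in fact the correct one (the paper's pre-corollary discussion has them swapped, but this is a harmless typo).
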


\begin{proof} For each $\gamma>0$ it was noted above that $g\in\cB$ is equivalent to $V_g\in \cL(A^{-\gamma})$ which in turn is equivalent to $V_g\in \cL(A^{-\gamma}_0)$. Accordingly, 
	part  (i) follows from  Corollary \ref{C.Opt_H_v}(i), with $v=v_\gamma$,  and 
	part	(ii)  follows from Corollary \ref{C.Opt_H_v}(ii).
\end{proof}

\begin{remark}\label{R.38}\rm 
	Observe that $A^{-\gamma}\subseteq A^{-(\gamma+1)}_0$ and that $A^{-(\gamma+1)}_0$ is a proper closed subspace of $A^{-(\gamma+1)}$, for every $\gamma>0$. Hence, the initial domain space $A^{-\gamma}$ is \textit{not dense} in the optimal domain spaces $[V_g, A^{-\gamma}]=[T_g, A^{-\gamma}]$; see Corollary \ref{C.Opt_Kor}(i). On the other hand, the initial domain space $A^{-\gamma}_0$ is \textit{dense} in the optimal domain spaces $[V_g, A^{-\gamma}_0]=[T_g, A^{-\gamma}_0]=A^{-(\gamma+1)}_0$ (cf. Corollary \ref{C.Opt_Kor}(ii)) because the space $\cP$ of all polynomials  is dense in $[V_g, A^{-\beta}_0]$ for all $\beta>0$, \cite[Proposition 3.12]{ABR_JFA}. Further examples where $X$ is not dense in $[T,X]$ occur in Section 5.
\end{remark}

\begin{corollary}\label{C.OpK_c}  Let $\gamma>0$ and $g\in \cB_0$ be a non-constant function satisfying $g', \frac{1}{g'}\in H^\infty$. Then $A^{-\gamma}$ is a proper subspace of $[V_g,A^{-\gamma}_0]=[T_g,A^{-\gamma}_0]=A^{-(\gamma+1)}_0$. 
\end{corollary}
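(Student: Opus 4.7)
The plan is to obtain this as a direct specialization of Corollary \ref{C.Opt_H_v}(iii) to the weight $v = v_\gamma$, where $v_\gamma(r) := (1-r)^\gamma$ on $[0,1)$, in the same spirit in which Corollary \ref{C.Opt_Kor} was deduced from parts (i)-(ii) of that general result. Since $\cB_0 \subseteq \cB$, the equality $[V_g, A^{-\gamma}_0] = [T_g, A^{-\gamma}_0] = A^{-(\gamma+1)}_0$ (as Banach spaces with equivalent norms) is immediate from Corollary \ref{C.Opt_Kor}(ii), so the only new content is that $A^{-\gamma}$ is \emph{properly} contained in this common space.

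First I would check that $v_\gamma$ satisfies all the weight hypotheses of Corollary \ref{C.Opt_H_v}(iii): it is log-convex, $\lim_{r \to 1^-} v_\gamma(r) = 0$, and it satisfies both \eqref{eq.Con-D} and \eqref{eq.D} (via the increasing/decreasing behaviour of $v_\gamma(r)(1-r)^{-\gamma/2}$ and $v_\gamma(r)(1-r)^{-2\gamma}$, respectively, as was already recorded in the discussion preceding Corollary \ref{C.Opt_Kor}). The associated weight is $w_\gamma(r) = (1-r)v_\gamma(r) = (1-r)^{\gamma+1}$, so that $H^\infty_{v_\gamma} = A^{-\gamma}$ and $H^0_{w_\gamma} = A^{-(\gamma+1)}_0$.

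Next I would invoke the characterization recalled just before Corollary \ref{C.Opt_Kor}: the assumption $g \in \cB_0$ is equivalent to the compactness of $V_g \colon A^{-\gamma}_0 \to A^{-\gamma}_0$ (by \cite[Proposition 3.1]{Ma}). Combined with the standing hypothesis $g', \frac{1}{g'} \in H^\infty$, every hypothesis of Corollary \ref{C.Opt_H_v}(iii) is in place for $v = v_\gamma$. Applying that corollary yields exactly
\[
A^{-\gamma} = H^\infty_{v_\gamma} \varsubsetneqq [V_g, H^0_{v_\gamma}] = [T_g, H^0_{v_\gamma}] = H^0_{w_\gamma} = A^{-(\gamma+1)}_0,
\]
which is the desired statement.

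There is no genuine obstacle: all the analytic work (continuity of $D$ and $J$ between the appropriate Korenblum spaces, the identity $[V_g, A^{-\gamma}_0] = A^{-(\gamma+1)}_0$, and the strict inclusion whenever $V_g$ is compact) has already been absorbed into Corollary \ref{C.Opt_H_v}. The only point to be careful about is invoking the correct compactness criterion, namely that membership in $\cB_0$, rather than merely in $\cB$, is what upgrades continuity of $V_g$ on $A^{-\gamma}_0$ to compactness, thereby triggering the applicability of Corollary \ref{C.Opt_H_v}(iii) (and hence of Proposition \ref{P.H_vcontained}) as opposed to merely Corollary \ref{C.Opt_H_v}(ii).
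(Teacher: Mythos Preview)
Your proposal is correct and follows essentially the same route as the paper: invoke the equivalence between $g\in\cB_0$ and compactness of $V_g$ on $A^{-\gamma}_0$, verify that $v_\gamma$ meets the standing hypotheses, and then apply Corollary~\ref{C.Opt_H_v}(iii). The paper's proof is just a terser version of yours, relying on the weight verifications already recorded before Corollary~\ref{C.Opt_Kor} rather than rechecking them.
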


\begin{proof}  For each $\gamma>0$ it was noted prior to Corollary \ref{C.Opt_Kor} that the condition $g\in \cB_0$ is equivalent to the compactness of $V_g\in\cL(A^{-\gamma})$ which in turn is equivalent to the compactness of $V_g\in\cL(A^{-\gamma})$. 
	Since $\lim_{r\to 1^-}v_\gamma(r)=0$, the desired conclusion
	follows from Corollary \ref{C.Opt_H_v}(iii).
\end{proof}

Corollary \ref{C.OpK_c}, which has the extra condition $g', \frac{1}{g'}\in H^\infty$, is a strengthening of Proposition 5.9(ii),(iv) in \cite{ABR_JFA},  where it is only assumed that $g\in \cB_0$.

Propositions \ref{P_H_v} and \ref{P.Opt_H_v} require some restrictions on the  weight function $v$ on $[0,1)$. We now relax these restrictions somewhat and proceed to establish a further  result regarding the description of $[V_g,H^\infty_v]$ and $[V_g,H^0_v]$.

Let $v$ be a weight function on $[0,1)$. For each $n\in\N$, let $r_n\in [0,1)$ be a  point where a maximum  of $r^nv(r)$ occurs. 
It is known that $H^\infty_v$ (resp. $H^0_v$) is isomorphic  to $\ell^\infty$ (resp. to $c_0$) if and only if the following
\begin{eqnarray*}
	&	\textrm{condition (B):}\quad  \forall b_1>1\ \exists b_2>1 \ \exists c>0\ \forall m,n\in\N:\nonumber \\
	&m\geq c,\ n\geq c,\  |m-n|\geq c \textrm{ and } \left(\frac{r_m}{r_n}\right)^m\frac{v(r_m)}{v(r_n)}\leq b_1\Rightarrow \left(\frac{r_n}{r_m}\right)^n\frac{v(r_n)}{v(r_m)}\leq b_2,
\end{eqnarray*}
is satisfied; see \cite[Theorem 1.1]{Lu_sm}. Examples of such weights $v$ on $[0,1)$ include $(1-r)^\gamma$, with $\gamma>0$, and $\exp(-(1-r)^{-1})$.

\begin{prop}\label{Fact 3} Let $g\in H(\D)$ satisfy $|g'|>0$ in $\D$ and  $v$ be a weight function on $[0,1)$. 
	\begin{itemize}
		\item[\rm (i)]  Suppose that  $\lim_{r\to1^-}v(r)=0$ and $T_g\colon H^0_v\to H^0_v$ is continuous. Then $[V_g,H^0_v]=[T_g,H^0_v]$ is a Banach space isometric to $H^0_v$. Moreover, if  $v$ also satisfies the condition (B), then  $[T_g,H^0_v]$ is a Banach space isomorphic to $c_0$.
		\item[\rm (ii)] Suppose that $T_g\colon H^\infty_v\to H^\infty_v$ is continuous. Then $[V_g,H^\infty_v]=[T_g,H^\infty_v]$ is a Banach space which is isometric to $H^\infty_v$. Moreover, if the weight function $v$ also satisfies condition (B), then  $[T_g,H^\infty_v]$ is isomorphic to $\ell^\infty$.
		\item[\rm (iii)] Suppose that  $\lim_{r\to1^-}v(r)=0$ and $T_g\colon H^0_v\to H^0_v$ is continuous. 
		\begin{itemize}
			\item[\rm (a)] $T_g\colon H^\infty_v\to H^\infty_v$ is continuous. 
			\item[\rm (b)] $(H^0_v)^{**}=H^\infty_v$.
			\item[\rm (c)] $T_g^{**}=T_g$ in $\cL(H^\infty_v)$.
			\item[\rm (d)] 	$[T_g,H^0_v]^{**}=[T_g,H^\infty_v]$.
		\end{itemize}
		\item[\rm (iv)] Let $v$ satisfy  $\lim_{r\to1^-}v(r)=0$ and the operator $T_g\colon H^0_v\to H^0_v$ be continuous but, not surjective. Then $H^\infty_v$ (resp. $H^0_v$) is properly contained in $[T_g, H^\infty_v]$ (resp. in $[T_g, H^0_v]$). 
	\end{itemize}
\end{prop}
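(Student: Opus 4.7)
The whole statement rests on the hypothesis $|g'|>0$ on $\D$, which by Proposition \ref{Fact 2} promotes $T_g$ to an isomorphism of $H(\D)$; every assertion will then follow by combining this single structural fact with results from Section 2 and standard biduality facts.

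For (i) and (ii), since $T_g$ is an isomorphism on $H(\D)$ and $T_g$ is continuous on $H^0_v$ (resp.\ $H^\infty_v$), Proposition \ref{Fact 1}(i) applied with $X=H^0_v$ (resp.\ $X=H^\infty_v$) immediately yields that $[T_g,X]$ is a Banach space isometric to $X$ via $f\mapsto T_gf$. The equality $[V_g,X]=[T_g,X]$ is furnished by \cite[Proposition 5.2]{ABR_JFA}, which ties continuity of $T_g$ to that of $V_g$ on weighted spaces and equates their optimal domains. The isomorphisms $[T_g,H^0_v]\cong c_0$ and $[T_g,H^\infty_v]\cong\ell^\infty$ under condition (B) are then instant applications of Lusky's theorem \cite[Theorem 1.1]{Lu_sm}, which identifies $H^0_v$ with $c_0$ and $H^\infty_v$ with $\ell^\infty$ under (B).

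For (iii)(a) I would chain \cite[Proposition 5.2]{ABR_JFA} (continuity of $T_g$ on $H^0_v$ is equivalent to continuity of $V_g$ on $H^0_v$) with Remark \ref{R.ContV_g_H} (which transfers continuity of $V_g$ from $H^0_v$ to $H^\infty_v$ via $(V_g)^{**}=V_g$ and $(H^0_v)^{**}=H^\infty_v$), and reapply \cite[Proposition 5.2]{ABR_JFA} to recover continuity of $T_g$ on $H^\infty_v$. Part (iii)(b) is the classical fact \cite[Example 2.1]{BS}. For (iii)(c) I would mirror the argument of \cite[Lemma 1]{BCHMP}: for each fixed $z\in\D$, the map $f\mapsto (T_gf)(z)$, which can be written as the integral $\int_0^1 f(tz)g'(tz)\,dt$, is $w^*$-continuous on $H^\infty_v=(H^0_v)^{**}$ because the point evaluations on $\D$ lie in $(H^0_v)^*$; hence $T_g$ is $w^*$-continuous on $H^\infty_v$ and agrees with the $w^*$-continuous bidual $(T_g|_{H^0_v})^{**}$ on the $w^*$-dense subset $H^0_v$, and therefore everywhere. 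Part (iii)(d) then follows: taking the bidual of the isometry $T_{[T_g,H^0_v]}$ from (i) produces an isometric isomorphism $[T_g,H^0_v]^{**}\to(H^0_v)^{**}=H^\infty_v$, which combined with the inverse of the isometry from (ii) and with $T_g^{**}=T_g$ from (c) naturally identifies $[T_g,H^0_v]^{**}$ with $[T_g,H^\infty_v]$.

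Finally, for (iv), Proposition \ref{Fact 1}(ii) applied with $X=H^0_v$ gives $H^0_v\varsubsetneqq[T_g,H^0_v]$ at once. The delicate step is $H^\infty_v\varsubsetneqq[T_g,H^\infty_v]$: I would first promote non-surjectivity from $H^0_v$ to $H^\infty_v$ by using (iii)(c) so that $T_g$ on $H^\infty_v$ is the bidual of $T_g$ on $H^0_v$, together with the classical equivalence that a bounded operator between Banach spaces is surjective if and only if its bidual is surjective; this forces $T_g\colon H^\infty_v\to H^\infty_v$ to be non-surjective. A second application of Proposition \ref{Fact 1}(ii), now with $X=H^\infty_v$ and continuity of $T_g$ there supplied by (iii)(a), then yields $H^\infty_v\varsubsetneqq[T_g,H^\infty_v]$. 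The principal obstacle throughout is the biduality bookkeeping for (iii)(c)--(d), most notably the rigorous verification of $T_g^{**}=T_g$ via $w^*$-continuity, and the subsequent transfer of non-surjectivity in (iv).
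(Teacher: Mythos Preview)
Your argument is correct and tracks the paper's proof closely for (i), (ii), (iii)(a)--(b), (iii)(d), and the $H^0_v$ half of (iv). Two localized differences are worth flagging.

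For (iii)(c), the paper does not verify $w^*$-continuity of $T_g$ directly. Instead it argues algebraically via the shift $M_z\colon f\mapsto zf$: since $M_zT_g=V_g$ in $\cL(H^0_v)$, and since $V_g^{**}=V_g$ and $M_z^{**}=M_z$ are already available from \cite{BCHMP} and \cite{BDL}, taking biduals gives $M_zT_g^{**}=V_g=M_zT_g$ in $\cL(H^\infty_v)$, after which one cancels $z$ pointwise. This recycles known biduality facts and avoids the Bochner-integral step your approach needs (namely, that $T_g^{*}\delta_z=\int_0^1 g'(tz)\,\delta_{tz}\,dt$ converges in $(H^0_v)^*$). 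Your route is perfectly viable and self-contained; the paper's is shorter because it leans on results already in the literature.

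For the $H^\infty_v$ half of (iv), the paper transfers non-surjectivity from $H^0_v$ to $H^\infty_v$ via the spectral identity $\sigma(T_g;H^0_v)=\sigma(T_g^{**};H^\infty_v)$ together with injectivity of $T_g$ on both spaces, rather than the equivalence ``$T$ surjective $\Leftrightarrow T^{**}$ surjective'' that you invoke. Both are consequences of the closed range theorem; your phrasing is slightly more direct, while the paper's spectral formulation makes the role of injectivity explicit. A minor bookkeeping point: for the continuity equivalence between $T_g$ and $V_g$ on $H^0_v$ and $H^\infty_v$ and the equality of their optimal domains, the paper cites \cite[Proposition~5.3]{ABR_JFA} rather than Proposition~5.2.
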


\begin{proof} Proposition \ref{Fact 2} implies that $T_g$ is an isomorphism on $H(\D)$.
	
	(i) The first claim follows from Proposition 5.3 (iii), (iv) in \cite{ABR_JFA} and Proposition \ref{Fact 1}(i). The discussion prior to this proposition together with  condition (B) imply the second claim.

	(ii) The first claim follows from Proposition 5.3 (i), (ii) in \cite{ABR_JFA}  and Proposition \ref{Fact 1}(i). Then the discussion prior to this proposition together with  condition (B) establish the second claim.
	
	(iii) (a) Since $T_g\colon H^0_v\to H^0_v$ is continuous,  the operator $V_g\colon H^0_v\to H^0_v$ is also continuous (cf. Proposition 5.3(iii) in  \cite{ABR_JFA}). Then,  $V_g^{**}=V_g$ and $(H_v^0)^{**}=H^\infty_v$ (see Remark 3.3) imply that $V_g\colon H^\infty_v\to H^\infty_v$ is continuous and hence,  also $T_g \colon H^\infty_v\to H^\infty_v$ is continuous (cf. Proposition 5.3(i) in \cite{ABR_JFA}).
	
	(b) This was argued in the proof of part (a).
	
	(c) The operator $T_g \colon H^\infty_v\to H^\infty_v$ is continuous; see (a) above. Let $M_z\in\cL(H(\D))$ denote the operator of pointwise multiplication by $z$ (i.e., $f(z)\mapsto zf(z)$, for $f\in H(\D)$). Then $M_z\in \cL(H^0_v)$ satisfies $M_z^{**}=M_z$ (recall that $(H^0_v)^{**}=H^\infty_v$), where on the right-side $M_z\in \cL(H^\infty_v)$; see Propositions 2.1 and 2.2 in \cite{BDL}. Moreover, $M_zT_g=V_g$ in $\cL(H^0_v)$ and, as noted in the proof of (a), $V_g^{**}=V_g$ in $\cL(H^\infty_v)$. It follows,, as identities in $\cL(H^\infty_v)$ that
	\[
	V_g=V_g^{**}=(M_zT_g)^{**}=M_z^{**}T_g^{**}=M_zT_g^{**}.
	\]
	Since also $M_zT_g=V_g$ in $\cL(H^\infty_v)$, the previous identity implies that $M_zT_g=M_zT_g^{**}$ in $\cL(H^\infty_v)$.  Given $h\in H^\infty_v$, it follows that $zT_g(h)(z)=zT_g^{**}(h)(z)$, for $z\in\D$, and hence, that $T_g(h)(z)=T_g^{**}(h)(z)$, for $z\in\D\setminus \{0\}$. By analytic continuation we can conclude that $T_g(h)=T_g^{**}(h)$ in $H^\infty_v$. That is, $T_g=T_g^{**}$.
	
	(d) By  (a), (b) the space  $[T_g, H^\infty_v]$ is  defined. Part (i) implies that  $[T_g,H^0_v]$ is isometric to $H^0_v$ and hence, $[T_g,H^0_v]^{**}$ is isometric to $(H^0_v)^{**}$. Moreover,  $(H^0_v)^{**}$ is isomorphic to $H^\infty_v$ (see (b) above). But, by part (ii), $H^\infty_v$ is isometric to $[T_g,H^\infty_v]$. Accordingly, $[T_g,H^0_v]^{**}$ is isomorphic to $[T_g,H^\infty_v]$.

	(iv) Since $T_g\in \cL(H(\D))$ is an isomorphism, both $T_g\in \cL(H^0_v)$ and $T_g=T_g^{**}\in \cL(H^\infty_v)$ are injective. Also, $T_g\in \cL(H^0_v)$ is not surjective and so $0\in\sigma(T_g;H^0_v)\setminus\sigma_{pt}(T_g;H^0_v)$. The injectivity of $T_g^{**}\in \cL(H^\infty_v)$ implies that $0\not\in \sigma_{pt}(T_g^{**};H^\infty_v)$. It then follows from the well known Banach space identity
	\[
	\sigma(T_g;H^0_v)=\sigma(T_g^{**}; (H^0_v)^{**})=\sigma(T_g;H^\infty_v)
	\]
	for operators that $0\in \sigma(T_g;H^\infty_v)\setminus\sigma_{pt}(T_g;H^\infty_v)$. Hence, $T_g\in \cL(H^\infty_v)$ is not surjective. The desired conclusion now follows 
	from Proposition \ref{Fact 1}(ii).
\end{proof}

\subsection{The Hardy spaces $H^p$ for $1\leq p\leq\infty$}
Given any  $1\leq  p<\infty$, the Hardy space $H^p$ is the linear space of all  functions $f\in H(\D)$  satisfying
\begin{equation}\label{eq.pnorm}
	\|f\|_p:=\sup_{0\leq r<1}\left(\frac{1}{2\pi}\int_0^{2\pi}|f(re^{i\theta})|^p\,d\theta\right)^{1/p}<\infty
\end{equation}
and, for $p=\infty$, the linear space $H^\infty$ consists of all functions $f\in H(\D)$ such that 
\begin{equation}\label{eq.normI}
	\sup_{0\leq r<1}\left(\sup_{\theta\in [0,2\pi]}|f(re^{i\theta})|\right)<\infty.
\end{equation}
Note that the expression  \eqref{eq.normI} equals $\|f\|_\infty$; see \eqref{eq.supnorm}. 
For $1\leq p< \infty$, it is well known that $\|\cdot\|_p$ is  a norm for which $H^p$ is a Banach space, \cite[Corollary 1, p.37]{Du}.  
Whenever $1\leq  p\leq  q \leq \infty$, it turns out that $H^q\subseteq H^p$ with a continuous inclusion. 
%Moreover,  the norm  $\|\cdot \|_p$ is increasing with $p$. Given $1\leq p<\infty$ and $f\in H^p$, there is a standard notation for the terms in \eqref{eq.pnorm}, namely
%\[
%M_p(r,f):=\left(\frac{1}{2\pi}\int_0^{2\pi}|f(re^{i\theta})|^p\,d\theta\right)^{1/p}, \quad r\in [0,1),
%\]
%which increase with $0\leq r\uparrow 1$. Accordingly, for every $r\in [0,1)$, we have  
%\begin{equation}\label{eq.disnorm}
%	M_p(r,f)\leq \|f\|_p=\sup_{0\leq r<1}M_p(r,f)=\lim_{r\to1^-}M_p(r,f).
%\end{equation}
%The same is true for $p=\infty$ if we define
%\[
%M_\infty (r,f):=\sup_{\theta\in [0,2\pi]}|f(re^{i\theta})|,\quad r\in %[0,1).
%\]
It is known, for each $z\in\D$, that the evaluation functional $\delta_z\colon f\mapsto f(z)$, for $f\in H^p$, is continuous, that is, $\delta_z\in (H^p)^*$; see the Lemma on p.36  of \cite{Du} for $1\leq p<\infty$. For $p=\infty$ and $z\in\D$  the inequality
$|\delta_z(f)|=|f(z)|\leq \|f\|_\infty$, for $f\in H^\infty$,
shows that $\delta_z\in (H^\infty)^*$.

The space $BMOA$ consists of all functions $f\in  H^2$ such that
\[
|f(0)|+\sup_{a\in\D}\|f\circ \phi_a-f(a)\|_2<\infty,
\]
where $\phi_a$, for $a\in\D$, is the family of M\"obius automorphisms of $\D$ given by $\phi_a(z):=\frac{z-a}{1-\overline{a}z}$,  for $z\in\D$. The space $VMOA $ consists of  all functions  $f\in  BMOA$ satisfying
\[
\lim_{|a|\to 1}\|f\circ \phi_a-f(a)\|_2=0.
\]
The space $VMOA$ is the closure of the polynomials in $BMOA$, \cite[Theorem 5.5]{Gir}. In particular, $H^\infty\subseteq BMOA$ and $A(\D)\subseteq VMOA$, \cite[Theorem 5.5 and Remark 5.2]{Gir}.

For $1\leq p<\infty$,  Aleman and Siskakis, \cite{AleSi1}, proved that the operator $T_g\colon H^p\to H^p$ is continuous (resp. compact) if and only if $g\in BMOA$ (resp. $g\in VMOA$). On the other hand,  for $1\leq p\leq \infty$ the operator  $V_g\colon H^p\to H^p$ is continuous (resp. compact, resp. weakly compact) if and only if $T_g \colon H^p\to H^p$ is continuous
(resp. compact, resp. weakly compact),  \cite[Lemma 3.7]{ABR_AIOT}. In the case of continuity, we have
$\|T_g\|_{H^p\to H^p}=\|V_g\|_{H^p\to H^p}$. We point out that $V_g$ (resp. $T_g$) as defined in \cite{ABR_AIOT} corresponds to $T_g$ (resp. $V_g$) as defined here.

For $g\in BMOA$ and $1\leq p<\infty$ the optimal domain $[V_g,H^p]$ was studied in \cite{BDNS}. In particular, it was shown  there  that $([V_g,H^p], \|\cdot\|_{[V_g, H^p]})$ is a Banach space and that $H^p$ is strictly contained in $[V_g, H^p]$ with a continuous inclusion. 

Our next aim is to establish that $[V_g,H^p]=[T_g, H^p]$ with equivalent norms. First, we need to introduce some auxiliary spaces and operators  (following the approach of \cite{ABR_JFA}).
For each $1\leq p\leq\infty$,  define
\[
H_0^p:=\{f\in H^p\, :\, f(0)=0\}.
\]
Accordingly, $H_0^p=\Ker(\delta_0)$ is a closed 1-codimensional subspace of $H^p$. Moreover,  $A_0(\D):=\{f\in A(\D):\, f(0)=0\}$ is a closed 1-codimensional subspace of $A(\D)$.

Consider the \textit{forward shift} operator  $S\colon H(\D)\to H(\D)$  defined by
\[
(Sf)(z):=zf(z),\quad  z\in \D,
\]
for each $f\in H(\D)$, which belongs to $\cL(H(\D))$. The inverse operator $S^{-1}$  is the restriction to the closed subspace $H_0^p$ of $H^p$ of the \textit{backward shift} operator given by $(Bf)(z):=(f(z)-f(0))/z$, for $ z \in \D$. For further information, see \cite{CiR}.

\begin{lemma}\label{L.35} {\rm (i)} Let  $1\leq p\leq\infty$. Then $S\in \cL(H^p)$ with $\|S\|_{H^p\to H^p}=1$ and range $S(H^p)=H^p_0$. Moreover, the operator $S$ is injective with the  inverse operator $S^{-1}\colon S(H^p)\to H^p$   continuous and satisfying  
	$$
	\|S^{-1}\|_{S(H^p)\to H^p}=\|S^{-1}\|_{H^p_0\to H^p}=1.
	$$
	
{\rm (ii)}	The operator $S$ belongs to $\cL(A(\D))$, has  norm $\|S\|_{A(\D)\to A(\D)}=1$ and its range $S(A(\D))=A_0(\D)$. Moreover, $S$ is injective with the  inverse operator $S^{-1}\colon S(A(\D))\to A(\D)$   continuous and satisfying  
$$
\|S^{-1}\|_{S(A(\D))\to A(\D)}=
\|S^{-1}\|_{A(\D)_0\to A(\D)}=1.
$$
\end{lemma}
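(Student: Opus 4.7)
The plan is to show $S$ is a surjective isometry from $H^p$ (resp.\ $A(\D)$) onto $H^p_0$ (resp.\ $A_0(\D)$), from which all the stated norm and invertibility claims follow at once. The key analytic identity is $M_p(r, Sf) = r\, M_p(r, f)$ for $1 \leq p < \infty$, where $M_p(r,f) := \left(\frac{1}{2\pi}\int_0^{2\pi}|f(re^{i\theta})|^p\,d\theta\right)^{1/p}$, with the analogue $\sup_{|z|=r}|(Sf)(z)| = r\sup_{|z|=r}|f(z)|$ for $p=\infty$. Combining this with the classical monotonicity that $r \mapsto M_p(r, f)$ is nondecreasing on $[0,1)$ with $\sup_r M_p(r, f) = \|f\|_p$ (or with the maximum modulus principle when $p=\infty$), the pointwise bound $r M_p(r, f)\leq M_p(r, f)\leq \|f\|_p$ yields $\|Sf\|_p\leq \|f\|_p$, while the reverse inequality follows because $r M_p(r,f)\to \|f\|_p$ as $r\to 1^-$. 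This establishes both that $S\in \cL(H^p)$ and $\|S\|_{H^p\to H^p}=1$.

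Next I would observe that $S$ is injective because $(Sf)(z) = zf(z)$ vanishes identically only if $f\equiv 0$, and clearly $S(H^p)\subseteq H^p_0$ since $(Sf)(0) = 0$. For the reverse inclusion, given $g\in H^p_0$ with Taylor expansion $g(z) = \sum_{n=1}^\infty \hat g(n) z^n$, the natural pre-image $h(z) := \sum_{n=0}^\infty \hat g(n+1) z^n$ lies in $H(\D)$ and satisfies $Sh = g$; the isometry just established yields $\|h\|_p = \|Sh\|_p = \|g\|_p < \infty$, so $h\in H^p$. Hence $S\colon H^p \to H^p_0$ is an isometric bijection, and therefore $S^{-1}\colon H^p_0\to H^p$ is an isometry of norm $1$.

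Part (ii) runs parallel. The norm $\|Sf\|_\infty$ on $A(\D)$ reduces to a supremum over $\partial\D$ by the maximum modulus principle, and since $|z|=1$ there, $\|Sf\|_\infty = \|f\|_\infty$ immediately, giving the isometry. For surjectivity onto $A_0(\D)$, given $g \in A_0(\D)$ the function $h(z) := g(z)/z$ extends analytically across $0$ (because $g(0) = 0$ just shifts the Taylor coefficients), is continuous on $\overline{\D}\setminus \{0\}$ because both $g$ and $z\mapsto 1/z$ are, and continuity at the interior point $0 \in \D$ follows from analyticity; hence $h\in A(\D)$ with $Sh = g$, and the isometry gives $\|h\|_\infty = \|g\|_\infty$. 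The only real obstacle throughout is the careful monotonicity and limit argument showing that the factor $r$ does not reduce the supremum in $\sup_r r M_p(r,f)$; this rests precisely on a nondecreasing function on $[0,1)$ attaining its supremum as a limit at $r\to 1^-$, which is insensitive to multiplication by $r\to 1$.
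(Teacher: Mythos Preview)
Your proof is correct. The paper does not actually prove this lemma in situ; it simply declares the result well known and refers the reader to \cite[Lemma 3.5]{ABR_AIOT} for a proof. So there is no in-paper argument to compare against, and your direct isometry computation via $M_p(r,Sf)=r\,M_p(r,f)$ together with the monotone-limit argument $r M_p(r,f)\to\|f\|_p$ is a clean and standard way to establish exactly the norm equalities claimed.

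One small point of phrasing: in the surjectivity step you write ``the isometry just established yields $\|h\|_p = \|Sh\|_p$'' before you know $h\in H^p$. What you have actually shown is the identity $\sup_{0\le r<1} r\,M_p(r,h)=\sup_{0\le r<1} M_p(r,h)$ for every $h\in H(\D)$ (as equalities in $[0,\infty]$), which is what your monotonicity-plus-limit argument gives without any a priori membership assumption; from this, $\|h\|_p=\|g\|_p<\infty$ and hence $h\in H^p$ follow cleanly. It is worth stating it that way to avoid the appearance of circularity.
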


Lemma \ref{L.35} is  well known; see   \cite[Lemma 3.5]{ABR_AIOT} for a proof.

\begin{prop}\label{P.Id_p} Let $1\leq p\leq\infty$ and $g\in H(\D)$ have the property that $V_g\colon H^p\to H^p$ is continuous. Then $[V_g,H^p]=[T_g,H^p]$ as linear spaces in $H(\D)$ and with isometric norms. In particular, $([T_g,H^p],\|\cdot\|_{[T_g,H^p]})$ is a Banach space.
	\end{prop}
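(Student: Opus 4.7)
The plan is to exploit the factorisation $V_g = S \circ T_g$ on $H(\D)$, where $S$ is the forward shift, and then transfer all structural information between $[V_g, H^p]$ and $[T_g, H^p]$ through Lemma \ref{L.35}.

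First I would observe that directly from the definitions \eqref{eq.GVO} and \eqref{eq.CC}, one has $(V_g f)(z) = z\,(T_g f)(z)$ for every $f \in H(\D)$ and every $z \in \D$, i.e.\ $V_g = S T_g$ as operators in $\cL(H(\D))$. In particular, every element of the range $V_g(H(\D))$ vanishes at $0$, so that for a given $f \in H(\D)$ the condition $V_g f \in H^p$ is equivalent to $V_g f \in H^p_0$ (using that $H^p_0 = S(H^p)$ by Lemma \ref{L.35}).

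Next I would translate the membership criterion. By Lemma \ref{L.35}, the shift $S \colon H^p \to H^p_0$ is a bijective isometry with isometric inverse. Therefore
\[
f \in [V_g, H^p] \iff V_g f \in H^p_0 \iff S T_g f \in H^p_0 \iff T_g f \in H^p \iff f \in [T_g, H^p],
\]
which gives the set-theoretic identification $[V_g, H^p] = [T_g, H^p]$ as subspaces of $H(\D)$. The isometry of norms follows from the same circle of ideas: using $\|Sh\|_p = \|h\|_p$ for all $h \in H^p$ (again Lemma \ref{L.35}),
\[
\|f\|_{[V_g, H^p]} = \|V_g f\|_p = \|S T_g f\|_p = \|T_g f\|_p = \|f\|_{[T_g, H^p]}, \quad f \in [V_g, H^p].
\]

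Finally, to obtain the ``in particular'' claim that $[T_g, H^p]$ is a Banach space, I would argue that $[V_g, H^p]$ is itself complete, and then transfer completeness through the isometry just established. For $1\leq p < \infty$ and $g \in BMOA$ (which is forced by the assumed continuity of $V_g$) the Banach space property of $[V_g, H^p]$ is known from \cite[Theorem 1]{BDNS}. For the remaining case $p = \infty$, I would verify completeness by checking that the natural inclusion $[V_g, H^\infty] \to H(\D)$ is continuous and appealing to Proposition~2.7 of \cite{ABR_JFA}: a Cauchy sequence $(f_n)$ in $[V_g, H^\infty]$ produces a Cauchy sequence $(V_g f_n)$ in $H^\infty$ with a limit $h \in H^\infty$, which together with the continuity of $V_g$ on $H(\D)$ and its injectivity delivers a preimage $f \in H(\D)$ with $V_g f = h$ and $f_n \to f$ in $[V_g, H^\infty]$. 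The main obstacle I anticipate is this last verification for $p=\infty$, since it must be handled without the $H^p$-machinery of \cite{BDNS}; the injectivity of $V_g$ on $H(\D)$ together with the continuity of the evaluations $\delta_z$ on $H^\infty$ should be sufficient to push the argument through.
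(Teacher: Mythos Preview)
Your proof is correct and follows essentially the same route as the paper: both exploit the factorisation $V_g = S\,T_g$ together with Lemma~\ref{L.35} to obtain the isometric identification, and both invoke \cite{BDNS} for completeness when $1\le p<\infty$. For $p=\infty$ the paper simply cites Remark~\ref{R.Hinfinito}(ii), which carries out exactly the plan you outline (continuity of the inclusion $[V_g,H^\infty]\hookrightarrow H(\D)$ and then \cite[Proposition~2.7]{ABR_JFA}); note, however, that the key estimate there is a Cauchy bound on $(V_gf)' = fg'$ via the differentiation operator $D\colon H^\infty\to H^\infty_w$, rather than merely the continuity of the point evaluations $\delta_z$ on $H^\infty$, so your final hint would need to be sharpened in that direction.
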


\begin{proof}
	Since $V_g\colon H^p\to H^p$ is continuous, it was noted above that $T_g\colon H^p\to H^p$ is  also  continuous.
	
	Let $h\in [V_g,H^p]$. Then $h\in H(\D)$ and $V_gh\in H^p$. According to the definition we have $(V_gh)(0)=0$ and so actually  $V_gh\in H^p_0$. In view of \eqref{eq.CC} and the definition of $V_g$,
	 Lemma \ref{L.35} implies that  $T_gh=(S^{-1}\circ V_g)h\in H^p$. Moreover,
	\begin{align}\label{eq.1}
	\|h\|_{[T_g,H^p]}&=\|T_gh\|_{H^p}=\|(S^{-1}\circ V_g)h\|_{H^p}\leq \|S^{-1}\|_{H^p_0\to H^p}\|V_gh\|_{H^p}\nonumber\\
	&=\|V_gh\|_{H^p}=\|h\|_{[V_g,H^p]}<\infty.
	\end{align}
This implies that $h\in [T_g,H^p]$. Therefore $[V_g,H^p]\subseteq [T_g,H^p]$ with a continuous inclusion.

Let $f\in [T_g,H^p]$. Then $f\in H(\D)$ and $T_gf\in H^p$. By Lemma \ref{L.35}(i) it follows that $V_gf=(S\circ T_g)f\in H^p_0\subseteq H^p$. So, $f\in [V_g,H^p]$. Moreover,
\begin{align}\label{eq.2}
	\|f\|_{[V_g,H^p]}&=\|V_gf\|_{H^p}=\|(S\circ T_g)f\|_{H^p}\leq \|S\|_{H^p\to H^p_0}\|T_gh\|_{H^p}\nonumber\\
	&=\|S\|_{H^p\to H^p}\|T_gh\|_{H^p}=\|f\|_{[T_g,H^p]}<\infty.
\end{align}
Accordingly, $f\in [V_g,H^p]$. Therefore, $[T_g,H^p]\subseteq [V_g,H^p]$ with a continuous inclusion.

The inequalities \eqref{eq.1} and \eqref{eq.2} imply that  $[T_g,H^p]= [V_g,H^p]$ as linear spaces in $H(\D)$ and  with isometric norms.
Since $([V_g,H^p], \|\cdot\|_{[V_g, H^p]})$ is a Banach space (cf. \cite{BDNS} for $1\leq p<\infty$, Remark \ref{R.Hinfinito} for $p=\infty$), it follows that $([T_g,H^p], \|\cdot\|_{[V_g, H^p]})$ is  also   a Banach space. \end{proof}

\begin{prop}\label{P.Hp_inclusion} Let $1\leq p\leq\infty$ and $g\in H(\D)$ have the property that $V_g\colon H^p\to H^p$ is compact. Then $H^p$ is a proper subspace of $[V_g,H^p]=[T_g,H^p]$.
	\end{prop}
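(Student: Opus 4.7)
The plan is to apply Corollary \ref{C1}(iii) directly, with $X=Y=H^p$ and $T=V_g$. To do this, I first need to check that all the standing hypotheses of Corollary \ref{C1} are met in this specialization. The space $H^p$ is an infinite-dimensional Banach space of analytic functions on $\D$, so the inclusion $H^p \subseteq H^p$ is trivially continuous. Since $V_g \colon H^p \to H^p$ is compact and $H^p$ is infinite-dimensional, necessarily $g$ is non-constant (otherwise $V_g$ would not be compact in any interesting sense, or alternatively one handles the trivial case separately); consequently $V_g \in \cL(H(\D))$ is injective by the fact recalled at the beginning of Section 3. The required Banach space structure of $([V_g,H^p],\|\cdot\|_{[V_g,H^p]})$ is \cite[Theorem 1]{BDNS} for $1\leq p<\infty$ and Remark \ref{R.Hinfinito}(ii) for $p=\infty$. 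Finally, $V_g(H^p)\subseteq H^p$ holds because $V_g$ is continuous on $H^p$.

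With these verifications in hand, Corollary \ref{C1}(iii), applied to the compact operator $V_g\in\cL(H^p)$ between two infinite-dimensional Banach spaces, yields at once that $H^p$ is a proper subspace of $[V_g,H^p]$. The identification $[V_g,H^p]=[T_g,H^p]$ is then supplied by Proposition \ref{P.Id_p}, completing the argument. I do not foresee any genuine obstacle here: the entire content of the result is packaged by the earlier general machinery (Lemma \ref{Lemma_Compact} and Corollary \ref{C1}(iii)), and the proposition itself simply specializes that machinery to $V_g$ on $H^p$.
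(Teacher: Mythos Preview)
Your proposal is correct and follows essentially the same approach as the paper: both invoke Corollary \ref{C1}(iii) for the compact operator $V_g\in\cL(H^p)$, relying on the Banach space structure of $([V_g,H^p],\|\cdot\|_{[V_g,H^p]})$ established in \cite{BDNS} and Remark \ref{R.Hinfinito}(ii). Your write-up is simply more explicit about verifying the hypotheses (injectivity of $V_g$, completeness of the optimal domain) and about invoking Proposition \ref{P.Id_p} for the identification with $[T_g,H^p]$, which the paper treats as already established.
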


\begin{proof} Since $([V_g,H^p], \|\cdot\|_{[V_g,H^p]}))$ is a Banach space and $V_g\colon H^p\to H^p$ is compact, we can apply Corollary \ref{C1}(iii) to conclude that  $H^p$ is a proper subspace of $[V_g,H^p]$.
\end{proof}

The following result provides more specific information for the spaces $[V_g,H^p]$ and $[T_g,H^p]$.

\begin{prop}\label{Fact 4} Let $1\leq p\leq \infty$. Suppose that $g\in H(\D)$ satisfies $|g'|>0$ in $\D$ and that $T_g\colon H^p\to H^p$ is continuous. 
\begin{itemize}
	\item[\rm (i)] The optimal domain space $[V_g,H^p]=[T_g,H^p]$ is a Banach space isometric to $H^p$. So, $[T_g,H^p]$ is separable and uniformly convex for $p\in (1,\infty)$. In particular, $[V_g,H^p]=[T_g,H^p]$ is reflexive for $p\in (1,\infty)$.
	\item[\rm (ii)] Suppose that  $T_g\colon H^p\to H^p$ is not surjective. Then $H^p$ is strictly contained in $[T_g,H^p]$.
\end{itemize}
\end{prop}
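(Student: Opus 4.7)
The proof is essentially a clean combination of earlier results. The plan is to transport everything through $T_g$ viewed as an isomorphism of $H(\D)$, then use the identification $[V_g,H^p]=[T_g,H^p]$ already established in Proposition \ref{P.Id_p}.

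For part (i), I would first invoke Proposition \ref{Fact 2}: the hypothesis $|g'|>0$ on $\D$ ensures that $T_g\in\cL(H(\D))$ is an isomorphism, with the explicit inverse $(T_g^{-1}h)(z)=(h(z)+zh'(z))/g'(z)$. Since $T_g\colon H^p\to H^p$ is assumed continuous, in particular $T_g(H^p)\subseteq H^p$, so Proposition \ref{Fact 1}(i) (applied to $X=H^p$ and the operator $T=T_g$) yields that $([T_g,H^p],\|\cdot\|_{[T_g,H^p]})$ is a Banach space and that the map $(T_g)_{[T_g,H^p]}\colon [T_g,H^p]\to H^p$, $f\mapsto T_gf$, is a surjective isometry onto $H^p$. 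Combining with Proposition \ref{P.Id_p} gives $[V_g,H^p]=[T_g,H^p]$ isometrically, hence both coincide as Banach spaces isometric to $H^p$. The remaining assertions are then purely Banach-space properties inherited through an isometry: separability of $H^p$ for $1\le p<\infty$, uniform convexity of $H^p$ for $1<p<\infty$ (since $H^p$ is a closed subspace of $L^p$ of the circle), and reflexivity of $H^p$ for $1<p<\infty$ (which in any case follows from uniform convexity via the Milman--Pettis theorem).

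For part (ii), I would again appeal to Proposition \ref{Fact 1}, but now to its second clause. The operator $T_g\in\cL(H(\D))$ is still an isomorphism by Proposition \ref{Fact 2}, $T_g(H^p)\subseteq H^p$ by the continuity hypothesis, and $T_g\colon H^p\to H^p$ is by assumption not surjective. Proposition \ref{Fact 1}(ii) then immediately yields that $H^p\varsubsetneqq [T_g,H^p]$. Finally, Proposition \ref{P.Id_p} identifies $[T_g,H^p]$ with $[V_g,H^p]$, so the strict inclusion passes to that space as well.

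No step here is really an obstacle: the hard work was done in Propositions \ref{Fact 1}, \ref{Fact 2} and \ref{P.Id_p}. The only point requiring a moment of care is checking that the cases $p=1$ and $p=\infty$ do not spoil the statement, since the stated conclusions (separability, uniform convexity, reflexivity) are explicitly restricted to $1<p<\infty$ and, for $p=\infty$, one only uses that $T_g\colon H^\infty\to H^\infty$ is continuous (which is consistent with Proposition \ref{P.Id_p} and Remark \ref{R.Hinfinito}).
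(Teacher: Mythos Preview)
Your proposal is correct and follows essentially the same route as the paper's own proof: invoke Proposition~\ref{Fact 2} for the isomorphism, then Proposition~\ref{Fact 1}(i)--(ii) for the isometry and the strict containment, and Proposition~\ref{P.Id_p} for the identification $[V_g,H^p]=[T_g,H^p]$. The only small point the paper makes explicit that you leave implicit is that the continuity of $T_g\colon H^p\to H^p$ forces the continuity of $V_g\colon H^p\to H^p$ (via \cite[Lemma~3.7]{ABR_AIOT}), which is the hypothesis actually needed to apply Proposition~\ref{P.Id_p}.
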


\begin{proof} (i) Proposition \ref{Fact 2} implies that $T_g\in \cL(H(\D))$ is an isomorphism. Moreover, Lemma 3.7 in \cite{ABR_AIOT} shows that $V_g\colon H^p\to H^p$ is also continuous.
	It follows from Propositions \ref{Fact 1}(i) and \ref{P.Id_p}  that $[V_g,H^p]=[T_g,H^p]$ is a Banach space isometric to $H^p$.
	
	For $\mathbb{T}:=\{z\in\C:\ |z|=1\}$ the Banach space $L^p(\mathbb{T})$ is uniformly convex whenever $1<p<\infty$ and so $H^p$, being isometrically isomorphic to a closed subspace of $L^p(\mathbb{T})$, \cite[Theorem III.3.8]{Ka}, is also uniformly convex. Since uniformly convex Banach spaces are necessarily reflexive, \cite{Pe}, the well known fact that the spaces $H^p$, for $1<p<\infty$, are  reflexive follows.
	
	Part (ii) follows from  Proposition \ref{Fact 1}(ii).
\end{proof}

The following result for $A(\D)$ is the analogue of \cite[Lemma 3.7]{ABR_AIOT} for $H^p$.

\begin{prop}\label{P.A} Let $g\in H(\D)$. 
	\begin{itemize}
		\item[\rm (i)] The operator $V_g\colon A(\D)\to A(\D)$ is continuous (resp. compact, resp. weakly compact) if and only if the operator $T_g\colon A(\D)\to A(\D)$ is continuous (resp. compact, resp. weakly compact). 
		\item[\rm (ii)] Let $V_g\colon A(\D)\to A(\D)$ be continuous. Then $[V_g,A(\D)]=[T_g,A(\D)]$ as linear spaces in $H(\D)$ and with isometric norms. In particular, $([V_g,A(\D)], \|\cdot\|_{[V_g,A(\D)]})$  is a Banach space and hence, $([T_g,A(\D)], \|\cdot\|_{[T_g,A(\D)]})$ is also a Banach space.
	\end{itemize}
\end{prop}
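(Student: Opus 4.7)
The plan is to reduce both parts to the algebraic identity $V_g = S \circ T_g$ (equivalently $T_g = S^{-1} \circ V_g$ on the range of $V_g$), where $S$ is the forward shift. Lemma \ref{L.35}(ii) provides that $S \in \cL(A(\D))$ is an isometry onto $A_0(\D)$ with isometric inverse $S^{-1} \colon A_0(\D) \to A(\D)$. Since $(V_g f)(0) = 0$ for every $f \in H(\D)$, the range of $V_g$ on $A(\D)$ lies in $A_0(\D)$ whenever $V_g(A(\D)) \subseteq A(\D)$, so $T_g = S^{-1} V_g$ as operators on $A(\D)$ is well-defined, and conversely $V_g = S T_g$.

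For part (i), the equivalence of continuity of $V_g$ and $T_g$ is immediate from these two identities, each being a composition with a bounded operator. The compactness and weak compactness analogues follow because these classes form two-sided operator ideals in $\cL(A(\D))$, so composing on either side with $S$ or $S^{-1}$ preserves membership. For part (ii), the coincidence $[V_g, A(\D)] = [T_g, A(\D)]$ with isometric norms proceeds exactly as in the proof of Proposition \ref{P.Id_p}: for $h \in [V_g, A(\D)]$ one has $V_g h \in A_0(\D)$, hence $T_g h = S^{-1}(V_g h) \in A(\D)$ and
$$\|h\|_{[T_g, A(\D)]} \leq \|S^{-1}\|_{A_0(\D) \to A(\D)} \|V_g h\|_{A(\D)} = \|h\|_{[V_g, A(\D)]},$$
with the reverse inequality coming from $V_g f = S(T_g f)$ and $\|S\|_{A(\D) \to A(\D)} = 1$.

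The main obstacle will be proving that $([V_g, A(\D)], \|\cdot\|_{[V_g, A(\D)]})$ is a Banach space; granted this, the isometry transports completeness to $[T_g, A(\D)]$. My plan here is to invoke Proposition 2.7 of \cite{ABR_JFA} (the same tool used in the proof of Proposition \ref{P_H_v}(i)), which reduces completeness to showing that the natural inclusion $\mathscr{J} \colon [V_g, A(\D)] \to H(\D)$ is continuous. Following the template of Proposition \ref{P_H_v}(i), I would fix $r \in (0,1)$ and, using that $g$ is non-constant so the zeros of $g'$ are isolated, choose $t \in (r,1)$ with $M := \min_{|z|=t} |g'(z)| > 0$. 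Then for $f \in [V_g, A(\D)]$ and $|u| \leq r$ one estimates
$$|f(u)| \leq \max_{|z|=t}|f(z)| \leq M^{-1} \max_{|z|=t} |f(z) g'(z)| = M^{-1} \max_{|z|=t} |(V_g f)'(z)|.$$
A Cauchy-integral estimate on a circle $|w| = s$ with $t < s < 1$ then gives $\max_{|z|=t}|(V_g f)'(z)| \leq s(s-t)^{-2} \|V_g f\|_\infty$, so that $q_r(f) \leq s M^{-1} (s-t)^{-2} \|f\|_{[V_g, A(\D)]}$, which is exactly the continuity of $\mathscr{J}$ required by Proposition 2.7 of \cite{ABR_JFA}.
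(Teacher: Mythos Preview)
Your proof is correct. Parts (i) and the isometric identification $[V_g,A(\D)]=[T_g,A(\D)]$ in (ii) coincide with the paper's argument essentially verbatim.

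For the completeness of $([V_g,A(\D)],\|\cdot\|_{[V_g,A(\D)]})$ the paper takes a different route: it first observes, via \cite[Theorem 2.5(i),(ii)]{Cont-a}, that continuity of $V_g$ on $A(\D)$ forces continuity of $V_g$ on $H^\infty$; then Remark \ref{R.Hinfinito}(ii) yields that $([V_g,H^\infty],\|\cdot\|_{[V_g,H^\infty]})$ is a Banach space; and finally, since $A(\D)$ is a closed, $V_g$-invariant subspace of $H^\infty$, \cite[Proposition 2.4]{ABR_JFA} shows that $[V_g,A(\D)]$ sits as a closed subspace of $[V_g,H^\infty]$. Your approach instead proves directly that the inclusion $[V_g,A(\D)]\hookrightarrow H(\D)$ is continuous, using a Cauchy-integral bound $\max_{|z|=t}|(V_gf)'(z)|\le s(s-t)^{-2}\|V_gf\|_\infty$ in place of the abstract differentiation estimate $D\colon H^\infty\to H^\infty_w$ underlying Remark \ref{R.Hinfinito}(ii). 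Your argument is more self-contained (it avoids the external input from \cite{Cont-a}), while the paper's argument has the side benefit of exhibiting $[V_g,A(\D)]$ explicitly as a closed subspace of the already-analyzed space $[V_g,H^\infty]$.
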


\begin{proof} (i) Suppose that $T_g\colon A(\D)\to A(\D)$ is continuous. By Lemma \ref{L.35}(ii) also $V_g=S\circ T_g\colon A(\D)\to A_0(\D)\subseteq A(\D)$ is continuous.
	
	Conversely, suppose that $V_g\colon A(\D)\to A(\D)$ is continuous. Since $V_g(A(\D))\subseteq A_0(\D)$,
	 Lemma \ref{L.35}(ii) implies that $T_g = S^{-1} \circ V_g\colon A(\D) \to A(\D)$ is also continuous.
	
The proof of compactness (weak compactness)	follows in similar way.
	
	 (ii) The proof  that $[V_g,A(\D)]=[T_g,A(\D)]$ as linear spaces in $H(\D)$ and with isometric norms follows by arguing as in the proof of Proposition \ref{P.Id_p}.

It remains to show that  	 $([V_g,A(\D)], \|\cdot\|_{[V_g,A(\D)]})$ is a Banach space. Since
		$V_g\colon A(\D)\to A(\D)$ is  continuous,  by \cite[Theorem 2.5(i), (ii)]{Cont-a}  the operator $V_g\colon H^\infty\to H^\infty$ is also continuous. So, Remark \ref{R.Hinfinito}(ii) ensures that $([V_g, H^\infty],\|\cdot\|_{[V_g,H^\infty]})$ is a Banach space. Moreover, $A(\D)$ is a closed subspace of $H^\infty$ satisfying $V_g(A(\D))\subseteq A(\D)$. We can then  apply  \cite[Proposition 2.4]{ABR_JFA} to conclude  that $([V_g,A(\D)], \|\cdot\|_{[V_g,A(\D)]})$  is a closed subspace of $([V_g, H^\infty],\|\cdot\|_{[V_g,H^\infty]})$ and hence, that it is a  Banach space.
\end{proof}

\begin{prop}\label{P.Hinfinito_inclusione} Let $g\in H(\D)$ and $V_g\colon A(\D)\to A(\D)$ be compact. Then $H^\infty$ is a proper subspace of $[V_g,A(\D)]=[T_g,A(\D)]$.
\end{prop}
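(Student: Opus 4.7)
The plan is to apply Corollary~\ref{C1}(iii) to the injective operator $T=V_g\in\cL(H(\D))$ with $X:=A(\D)$ and $Y:=H^\infty$. The continuous inclusion $A(\D)\subseteq H^\infty$ is immediate, and Proposition~\ref{P.A}(ii) furnishes the Banach space structure on $[V_g,A(\D)]$. Once we know that $V_g(H^\infty)\subseteq A(\D)$ and that $V_g\colon H^\infty\to A(\D)$ is compact, Corollary~\ref{C1}(iii) yields $H^\infty\varsubsetneqq [V_g,A(\D)]$, and the identification $[V_g,A(\D)]=[T_g,A(\D)]$ from Proposition~\ref{P.A}(ii) closes the loop. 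So the real work is to verify these two properties of the extension of $V_g$ from $A(\D)$ to $H^\infty$.

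The key device is dilation. For $f\in H(\D)$ and $r\in(0,1)$, set $f_r(z):=f(rz)$; since $f_r$ is analytic on $\{|z|<1/r\}$, it belongs to $A(\D)$, and for $f\in H^\infty$ one has $\|f_r\|_{A(\D)}=\sup_{|w|\leq r}|f(w)|\leq\|f\|_\infty$. Fix $M>0$ and form the set $\cF_M:=\{f_r:\,r\in(0,1),\ f\in H^\infty,\ \|f\|_\infty\leq M\}$, which is a bounded subset of $A(\D)$. Since $V_g\colon A(\D)\to A(\D)$ is compact, $V_g(\cF_M)$ is relatively compact in $A(\D)$; let $K$ denote its $A(\D)$-closure, so that $K$ is norm-compact.

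Now fix $f\in H^\infty$ with $\|f\|_\infty\leq M$. As $r\to 1^-$, $f_r\to f$ in $H(\D)$, so by continuity of $V_g\in\cL(H(\D))$ also $V_gf_r\to V_gf$ in $H(\D)$. On the other hand $\{V_gf_r:r\in(0,1)\}\subseteq V_g(\cF_M)$ is relatively compact in $A(\D)$, so any sequence $r_n\to 1^-$ admits a subsequence along which $V_gf_{r_n}$ converges in $A(\D)$ to some $h\in K$; uniqueness of limits in $H(\D)$ forces $h=V_gf$, whence $V_gf\in K\subseteq A(\D)$. This simultaneously proves $V_g(H^\infty)\subseteq A(\D)$ and $V_g(\{f\in H^\infty:\|f\|_\infty\leq M\})\subseteq K$, giving continuity and compactness of $V_g\colon H^\infty\to A(\D)$.

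The main obstacle I expect is structuring the dilation step so that a single invocation of compactness of $V_g$ on $A(\D)$ simultaneously delivers both membership $V_gf\in A(\D)$ and compactness of the extended operator. A naive two-step strategy (first extend to $H^\infty$, then argue compactness separately via $V_g\circ D_r\to V_g$ in operator norm) runs aground on the fact that $\|f-f_r\|_\infty\not\to 0$ for general $f\in H^\infty$, so norm approximation of the extension by the compact operators $V_g\circ D_r$ fails. Pooling \emph{all} dilates (varying both $f\in B_{H^\infty}^M$ and $r\in(0,1)$) into the single bounded subset $\cF_M$ of $A(\D)$, as above, bypasses this difficulty. With compactness of $V_g\colon H^\infty\to A(\D)$ secured, and since both $A(\D)$ and $H^\infty$ are infinite dimensional, Corollary~\ref{C1}(iii) applies and produces the strict containment $H^\infty\varsubsetneqq [V_g,A(\D)]=[T_g,A(\D)]$.
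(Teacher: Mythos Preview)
Your proof is correct and follows the same overall architecture as the paper's: verify that $V_g\colon H^\infty\to A(\D)$ is well defined and compact, use Proposition~\ref{P.A}(ii) to get both the Banach-space structure on $[V_g,A(\D)]$ and the identification with $[T_g,A(\D)]$, and then invoke Corollary~\ref{C1}(iii). The only substantive difference is in how the compactness of the extension $V_g\colon H^\infty\to A(\D)$ is obtained. The paper simply cites \cite[Theorem~1.7]{Cont-a}, which states directly that compactness (equivalently, weak compactness) of $V_g$ on $A(\D)$ forces $V_g(H^\infty)\subseteq A(\D)$ with $V_g\colon H^\infty\to A(\D)$ compact. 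You instead supply a self-contained dilation argument: pool all dilates $f_r$ of functions in the $H^\infty$-ball into a single bounded set in $A(\D)$, apply compactness of $V_g$ once, and identify the $A(\D)$-limit with the $H(\D)$-limit $V_gf$. Your route is more elementary and avoids an external reference; the paper's route is shorter. Your observation that one cannot approximate $V_g$ in operator norm by $V_g\circ D_r$ (since $\|f-f_r\|_\infty\not\to 0$ in general) and must therefore work with the pooled family $\cF_M$ is exactly the point that makes the dilation argument succeed.
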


\begin{proof} Since $V_g\colon A(\D)\to A(\D)$ is compact, Theorem 1.7 of \cite{Cont-a} implies that  $V_g(H^\infty)\subseteq A(\D)$ and that the operator $V_g\colon H^\infty\to A(\D)$ is also compact (as is $V_g\colon H^\infty\to H^\infty$). According to Proposition \ref{P.A}(ii) we have that $([V_g,A(\D)],\|\cdot\|_{[V_g,A(\D)]})$ is a Banach space. So, by Corollary \ref{C1}(iii) we can conclude that $H^\infty$ is a proper subspace of $[V_g,A(\D)]$.
\end{proof}

\begin{prop}\label{P.A_H} Let $g\in H(\D)$ satisfy $g', \frac{1}{g'}\in H^\infty$ and  $V_g\colon A(\D)\to A(\D)$ be continuous.
	\begin{itemize}
		\item[\rm (i)] $H^1\subseteq [V_g, A(\D)]=[T_g, A(\D)]\subseteq A^{-1}_0$.
		\item[\rm (ii)] $H^1\subseteq [V_g, H^\infty]=[T_g, H^\infty]\subseteq A^{-1}$.
	\end{itemize} 
\end{prop}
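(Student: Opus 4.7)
My plan is to prove the two identities and the inclusions separately. The identities $[V_g,A(\D)]=[T_g,A(\D)]$ and $[V_g,H^\infty]=[T_g,H^\infty]$ are immediate from Proposition \ref{P.A}(ii) and Proposition \ref{P.Id_p}, respectively, once one notes---as already used in the proof of Proposition \ref{P.A}(ii)---that continuity of $V_g$ on $A(\D)$ implies its continuity on $H^\infty$ via \cite[Theorem 2.5]{Cont-a}.

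For the lower inclusions $H^1\subseteq [V_g,A(\D)]$ and $H^1\subseteq [V_g,H^\infty]$, the key tool is Hardy's inequality, namely $\sum_{n=0}^\infty |\hat h(n)|/(n+1)\leq C\|h\|_{H^1}$ for every $h\in H^1$. This forces the Taylor series of $Jh(z)=\sum_{n\geq 0}\hat h(n)\,z^{n+1}/(n+1)$ to converge absolutely and uniformly on $\overline{\D}$, so that $J\colon H^1\to A(\D)\subseteq H^\infty$ continuously. Combining this with \eqref{eq.C} and the hypothesis $g'\in H^\infty$ (which ensures $fg'\in H^1$ whenever $f\in H^1$), one obtains $V_gf=J(fg')\in A(\D)$, yielding both lower inclusions at once.

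For the upper inclusions the plan is to differentiate. The Schwarz--Pick estimate $(1-|z|)|h'(z)|\leq \|h\|_\infty$ gives $D\colon H^\infty\to A^{-1}$, while the chain $A(\D)\subseteq VMOA\subseteq \cB_0$ (the first inclusion from Subsection 3.2, the second classical) gives $D\colon A(\D)\to A^{-1}_0$. Now if $f\in [V_g,H^\infty]$, then $(V_gf)'=fg'\in A^{-1}$, and since $1/g'\in H^\infty$ is a multiplier of both $A^{-1}$ and $A^{-1}_0$ (the latter because $(1-|z|)|(1/g'(z))u(z)|\leq\|1/g'\|_\infty(1-|z|)|u(z)|\to 0$ whenever $u\in A^{-1}_0$), writing $f=(1/g')\cdot(fg')$ gives $f\in A^{-1}$; the same argument with $A^{-1}_0$ in place of $A^{-1}$ gives $f\in A^{-1}_0$ when $f\in [V_g,A(\D)]$. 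No genuine obstacle arises: the only non-elementary external input is Hardy's inequality, and the rest simply recombines the factorization $V_g=J(\,\cdot\,g')$ from \eqref{eq.C} with standard multiplier and Hardy--Littlewood estimates.
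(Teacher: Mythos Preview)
Your proof is correct and follows essentially the same approach as the paper: the paper also uses Hardy's inequality (cited as \cite[Theorem 3.11]{Du}) to get $J\colon H^1\to A(\D)$, and for the upper inclusions it likewise argues that $A(\D)\subseteq\cB_0$ (via density of polynomials rather than via $VMOA$) and $H^\infty\subseteq\cB$, then differentiates and divides by $g'$. One minor terminological point: the estimate $(1-|z|)|h'(z)|\leq\|h\|_\infty$ is a Cauchy estimate, not the Schwarz--Pick lemma, but the inequality itself is correct.
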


\begin{proof} (i) Let $f\in H^1\subseteq H(\D)$. Since $g'\in H^\infty$, it follows that also $fg'\in H^1$ (cf. \eqref{eq.pnorm}). According to \eqref{eq.BB},  the function $J(fg')\in H(\D)$ satisfies $D(J(fg'))=fg'\in H^1$. Then by \eqref{eq.C} and \cite[Theorem 3.11]{Du} we can conclude that $V_g(f)=J(fg')\in A(\D)$. This means that $f\in [V_g,A(\D)]$. So, $H^1\subseteq [V_g, A(\D)]$.
	
	Next, let $h\in [V_g, A(\D)]$. Then $h\in H(\D)$ and, via \eqref{eq.C}, we have $J(hg')=V_g(h)\in A(\D)$. The Stone-Weierstrass theorem implies that $\cP|_{\overline{\D}}$ is dense in $C(\overline{\D})$ and hence, $\cP$ is dense in $(A(\D),\|\cdot\|_\infty)$. Since $H^\infty\subseteq \cB$ continuously (see (2.3) in Section 2.1 of \cite{ACP}), also $A(\D)\subseteq \cB$ continuously. But, $\cB_0$ is the closure of $\cP$ in $\cB$, \cite[Theorem 2.1]{ACP}, and so 
	 $A(\D)\subseteq\cB_0$. Accordingly,  $J(hg')\in \cB_0$ and hence,  $hg'=D(J(hg'))\in A^{-1}_0$. Since $\frac{1}{g'}\in H^\infty$, it follows that also  $h=\frac{1}{g'}\cdot(hg')\in A^{-1}_0$.
	 
	 The equality $[V_g,A(\D)]=[T_g,A(\D)]$ is due to Proposition \ref{P.A}(ii).
	
	(ii) First note that the operator $V_g\colon H^\infty\to H^\infty$ is also continuous, \cite[Theorem 2.5(i), (ii)]{Cont-a}. 
	
	The inclusion $H^1\subseteq [V_g, H^\infty]$ follows from the fact that $H^1\subseteq [V_g, A(\D)]$ (cf. part (i)) and that $[V_g, A(\D)]\subseteq [V_g, H^\infty]$ because $A(\D)\subseteq H^\infty$.
	
	Next, let $f\in  [V_g, H^\infty]$. Then $f\in H(\D)$ and $J(fg')=V_g(f)\in H^\infty\subseteq \cB$ and hence, $fg'=D(J(fg'))\in A^{-1}$.  Since $\frac{1}{g'}\in H^\infty$, it follows that also $f=\frac{1}{g'}\cdot(fg')\in A^{-1}$.
	
	 The equality $[V_g,H^\infty]=[T_g,H^\infty]$ is due to Proposition \ref{P.Id_p}.
	\end{proof}

For $0<\alpha\leq 1$  recall the \textit{Lipschitz spaces} of analytic functions on $\D$ defined by
\[
\Lambda_\alpha=:\left\{g\in H(\D):\ \sup_{z\in \D}(1-|z|)^{1-\alpha}|g'(z)|<\infty\right\},
\] 
\[
\lambda_\alpha=:\left\{g\in H(\D):\ \lim_{|z|\to 1^- }(1-|z|)^{1-\alpha}|g'(z)|=0\right\};
\] 
see \cite[Chapter 5]{Du}, for example. Observe that $g\in\Lambda_\alpha$ if and only if $g'\in A^{1-\alpha}$.

\begin{prop}\label{P.HpHq} Let $g\in H(\D)$ and $1\leq p<q$.
	\begin{itemize}
		\item[\rm (i)] Suppose that  $\left(\frac{1}{p}-\frac{1}{q}\right)\leq 1$ and $g\in \Lambda_{\frac{1}{p}-\frac{1}{q}}$. Then $H^p\subseteq [V_g,H^q]=[T_g,H^q]$ with a continuous inclusion.
		\item[\rm (ii)] Suppose that  $\left(\frac{1}{p}-\frac{1}{q}\right)<1$ and $g\in \lambda_{\frac{1}{p}-\frac{1}{q}}$. Then $H^p$ is a proper subspace of $[V_g,H^q]=[T_g,H^q]$.
	\end{itemize}
\end{prop}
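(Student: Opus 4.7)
The plan is to set $\alpha:=\frac{1}{p}-\frac{1}{q}\in(0,1]$ and combine the classical off-diagonal boundedness/compactness characterization of $T_g$ between Hardy spaces of distinct exponents with the general machinery developed in Section 2 together with Proposition \ref{P.Id_p}.

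For part (i), I would invoke the classical result (due to Aleman--Siskakis, extended by Aleman--Cima) that, for $1\leq p<q\leq\infty$ with $\alpha\leq 1$, the operator $T_g\colon H^p\to H^q$ is continuous whenever $g\in\Lambda_\alpha$. From this, $T_g f\in H^q$ for every $f\in H^p$, so $H^p\subseteq [T_g,H^q]$, and the continuous inclusion is immediate from the estimate $\|f\|_{[T_g,H^q]}=\|T_gf\|_{H^q}\leq \|T_g\|_{H^p\to H^q}\|f\|_{H^p}$. To pass to $[V_g,H^q]=[T_g,H^q]$ via Proposition \ref{P.Id_p}, one needs $V_g\colon H^q\to H^q$ continuous. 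Since $\alpha\in(0,1]$ forces $\Lambda_\alpha\subseteq H^\infty\subseteq BMOA$, this holds by the diagonal Aleman--Siskakis theorem \cite{AleSi1} when $q<\infty$; for $q=\infty$ a separate but analogous argument applies.

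For part (ii), the compactness analogue of the same classical result yields that $T_g\colon H^p\to H^q$ is compact whenever $g\in\lambda_\alpha$ with $\alpha<1$. Assuming $g$ is non-constant (for constant $g$ the operator $T_g$ fails to be injective and the optimal domain is degenerate), $T_g\in\cL(H(\D))$ is injective. The hypotheses of Corollary \ref{C1}(iii) are then verified with $X:=H^q$ and $Y:=H^p$: the inclusion $H^q\subseteq H^p$ is continuous since $p<q$, both spaces are infinite-dimensional Banach spaces of analytic functions on $\D$, the space $[T_g,H^q]=[V_g,H^q]$ is a Banach space by Proposition \ref{P.Id_p} (applicable by part (i)), and $T_g\in\cL(H^p,H^q)$ is compact by the cited characterization. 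The corollary then delivers $H^p\subsetneqq[T_g,H^q]$.

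The main obstacle lies not in the formal application of the paper's own machinery, but in the classical off-diagonal theorem identifying boundedness and compactness of $T_g\colon H^p\to H^q$ with membership in $\Lambda_\alpha$ and $\lambda_\alpha$. Its proof typically proceeds via Littlewood--Paley/area-function techniques; a naive approach combining the pointwise bounds $|f(z)|\leq C(1-|z|)^{-1/p}\|f\|_{H^p}$ and $|g'(z)|\leq C(1-|z|)^{-(1-\alpha)}\|g\|_{\Lambda_\alpha}$ with the Hardy--Littlewood mean inequality $M_q(s,f)\leq C(1-s)^{-\alpha}\|f\|_{H^p}$ inserted into $M_q(r,V_gf)\leq\int_0^r M_q(s,fg')\,ds$ only produces a logarithmic divergence and is therefore not tight enough. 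Edge cases, in particular $q=\infty$ and $\alpha=1$, require separate attention but pose no additional conceptual difficulty once the main theorem is in hand.
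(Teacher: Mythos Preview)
Your proposal is correct and follows essentially the same route as the paper: both invoke the Aleman--Cima theorem \cite{AC} for the off-diagonal boundedness/compactness of the Volterra operator between $H^p$ and $H^q$, then use Proposition \ref{P.Id_p} to identify $[V_g,H^q]=[T_g,H^q]$ as a Banach space, and finally apply Corollary \ref{C1}(iii) for the strict containment in part (ii). One small difference worth noting: to obtain the diagonal continuity $V_g\colon H^q\to H^q$ (needed for Proposition \ref{P.Id_p}), the paper simply composes the off-diagonal map $V_g\colon H^p\to H^q$ with the continuous inclusion $H^q\subseteq H^p$, which is more direct than your $\Lambda_\alpha\subseteq H^\infty\subseteq BMOA$ argument and handles the case $q=\infty$ uniformly without a separate check.
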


\begin{proof} (i) The conditions $\left(\frac{1}{p}-\frac{1}{q}\right)\leq 1$ and $g\in \Lambda_{\frac{1}{p}-\frac{1}{q}}$ imply that $V_g\colon H^p\to H^q$ is continuous, \cite[Theorem 1(iii)]{AC}. Since $p<q$, we have $H^q\subseteq H^p$ with a continuous inclusion and so $V_g\colon H^q\to H^q$ is also continuous. Accordingly,  $([V_g,H^q],\|\cdot\|_{[V_g,H^q]})$ is defined and is a Banach space; see Proposition \ref{P.Id_p}.  The definition of the optimal domain space $[V_g,H^q]$ shows that $H^p\subseteq [V_g,H^q]$.
	
	(ii) The conditions $\left(\frac{1}{p}-\frac{1}{q}\right)< 1$ and $g\in \lambda_{\frac{1}{p}-\frac{1}{q}}$ imply that $V_g\colon H^p\to H^q$ is compact, \cite[Theorem 1(iii)]{AC}. So, as observed in part (i), we can define the  Banach space $([V_g,H^q],\|\cdot\|_{[V_g,H^q]})$. By Corollary \ref{C1}(iii) we can conclude that $H^p$ is a proper subspace of $[V_g,H^q]$.
\end{proof}

\section{Generalized Cesàro operators $C_t$ for $0\leq t\leq 1$}

This section is devoted to a consideration of the optimal domain of a particular class of classical operators. Namely, the operators $V_{g_t}$ and $T_{g_t}$, for $t\in [0,1]$, where the family of symbols $\{g_t:\ t\in [0,1]\}$ is specified in \eqref{eq.17}. Since
%
% For each $0<t\leq 1$ define $g_t(z):=-\frac{1}{t}{\rm Log}(1-tz)$ and, for $t=0$, define $g_0(z):=z$, for $z\in\D$. Since
 \begin{equation}\label{eq.NN}
 	|g'_t(z)|=\left|\frac{1}{1-tz}\right|\leq \frac{1}{|1-|tz||}\leq \frac{1}{1-t},\quad z\in\D,
 \end{equation}
 it follows that
$g_t\in \cB_0$. 
  For each $f\in H(\D)$, the operator $C_t$ is defined by $(C_tf)(0):=f(0)$ and
\begin{equation}\label{eq.CesaroO}
(C_tf)(z):=\frac{1}{z}\int_0^z\frac{f(\xi)}{1-t\xi}\,d\xi.\ z\in\D\setminus\{0\}.
\end{equation}
Note that $C_t=T_{g_t}$. It is called the  \textit{generalized Cesàro operator} corresponding to $t$ (for $t=1$ it is the classical  \textit{Cesàro operator}). We  point out that the related operator $V_{g_t}$ is given by
\begin{equation}\label{eq.Volt_C}
(V_{g_t}f)(z)=\int_0^z\frac{f(\xi)}{1-t\xi}\,d\xi,\quad z\in\D.
\end{equation}
Aleman and Persson have made an extensive investigation of
various properties of generalized Ces\`aro operators
acting in a large class of Banach spaces of analytic functions on $\D$, \cite{AP,AleSi1,P}.
In particular, their results apply to the classical Ces\`aro operator $C$ given by \eqref{eq.CesaroO} when it acts in the Korenblum growth spaces $A^{-\gamma}$ and $A^{-\gamma}_0$ for $\gamma > 0$.
Additional results which complement and extend their work can be found in \cite{ABR-R}.
For a detailed investigation of the optimal domain spaces $[C,H^p]$, for $1\leq p<\infty$, we refer to \cite{CR}. 

The following result  collects some relevant properties of the functions $g_t$, for $t\in [0,1]$.

\begin{lemma}\label{P.Propgt} Let $0\leq t<1$.
	\begin{itemize}
		\item[\rm (i)] The function $g_t\in A(\D)\subset VMOA$.
		\item[\rm (ii)] The function $g'_t\in A(\D)\subset VMOA$.
		\item[\rm (iii)] The function $\frac{1}{g'_t}\colon z\mapsto 1-tz\in A(\D)\subseteq VMOA$.
		\item[\rm (iv)] For each $0<\alpha\leq 1$, the function $g_t\in \Lambda_\alpha$, that is, $g'_t\in A^{-(1-\alpha)}$.
			\item[\rm (v)] For each $0<\alpha< 1$, the function $g_t\in \lambda_\alpha$, that is, $g'_t\in A^{-(1-\alpha)}_0$.
			\item[\rm (vi)] $g_t\in \cB_0$.
	\end{itemize}
	\end{lemma}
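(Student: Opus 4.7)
The unifying observation that drives all six claims is that when $t \in [0,1)$, the factor $1-tz$ is bounded away from zero on the closed disc: indeed $|1-tz| \geq 1 - t|z| \geq 1-t > 0$ for every $z \in \overline{\D}$. Consequently, both $g_t$ and $g_t'$ extend to functions that are analytic in an open neighborhood of $\overline{\D}$, which quickly yields every membership asserted in the lemma. My plan is to prove the claims in the order (iii), (ii), (i), then (iv)--(vi), since this matches the logical flow from the simplest objects upward.

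For (iii), note that $1/g_t'(z) = 1-tz$ is a polynomial, so it lies in $A(\D)$ trivially, and $A(\D) \subseteq VMOA$ by the inclusion noted just above Proposition~\ref{P.Id_p}. For (ii), the function $g_t'(z) = 1/(1-tz)$ is analytic on the open set $\{z : |tz| < 1\}$, which strictly contains $\overline{\D}$; restricting to $\D$ gives a function whose continuous extension to $\overline{\D}$ is just $1/(1-tz)$, placing $g_t'$ in $A(\D)$. For (i), the principal branch of $\log(1-tz)$ is analytic wherever $1-tz$ avoids the negative real axis through the origin; since $\operatorname{Re}(1-tz) \geq 1-t > 0$ on $\overline{\D}$, the branch is well-defined and analytic in a neighborhood of $\overline{\D}$, whence $g_t = -\log(1-tz)/t$ belongs to $A(\D)$, and again $A(\D) \subseteq VMOA$.

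Claims (iv), (v), and (vi) all follow from the boundedness of $g_t'$ on $\D$ given by \eqref{eq.NN}, namely $|g_t'(z)| \leq 1/(1-t)$. For (iv), for each $0 < \alpha \leq 1$ one estimates
\[
\sup_{z \in \D}(1-|z|)^{1-\alpha}|g_t'(z)| \leq \sup_{z \in \D}|g_t'(z)| \leq \frac{1}{1-t},
\]
using $(1-|z|)^{1-\alpha} \leq 1$; this gives $g_t' \in A^{-(1-\alpha)}$, equivalently $g_t \in \Lambda_\alpha$. For (v), when $0 < \alpha < 1$ the exponent $1-\alpha > 0$, so $(1-|z|)^{1-\alpha} \to 0$ as $|z|\to 1^-$ while $|g_t'(z)|$ stays bounded by $1/(1-t)$, forcing $(1-|z|)^{1-\alpha}|g_t'(z)| \to 0$ and hence $g_t \in \lambda_\alpha$. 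Claim (vi) is just the $\alpha = 0$ version of the same argument (applied to the Bloch weight $1-|z|$ rather than a Lipschitz weight): $(1-|z|)|g_t'(z)| \leq (1-|z|)/(1-t) \to 0$ as $|z|\to 1^-$, confirming $g_t \in \cB_0$.

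No step here is an obstacle; the only thing to be careful about is the $t$-dependence of the bound $1/(1-t)$, which blows up as $t \to 1$ and explains why the hypothesis $t < 1$ is essential (the case $t=1$ is excluded because then $g_1(z) = -\log(1-z)$ is the symbol of the classical Cesàro operator, which no longer extends to $\overline{\D}$). Since each assertion reduces to a one-line verification once the neighborhood-of-$\overline{\D}$ analyticity and the uniform bound \eqref{eq.NN} are in place, the write-up will be short.
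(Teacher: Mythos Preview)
Your proof is correct and follows essentially the same approach as the paper: both arguments rest on the analyticity of $g_t$ and $g_t'$ in a neighborhood of $\overline{\D}$ for parts (i)--(iii), and on the uniform bound \eqref{eq.NN} for parts (iv)--(vi). The only differences are presentational (you treat (iii), (ii), (i) in that order and justify the branch of the logarithm via $\operatorname{Re}(1-tz)\ge 1-t>0$, whereas the paper simply notes that $g_t$ is holomorphic on the disc of radius $1/t$).
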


\begin{proof} (i) Since $g_t$ is holomorphic in the open disc centred at $0$ with radius $\frac{1}{t}>1$, which contains $\overline{\D}$, it is clear that $g_t\in A(\D)$. A similar argument as in (i) applies in (ii) to $g'_t(z)=\frac{1}{1-tz}$, for $z\in\D$, as is the case for the polynomial $\frac{1}{g'_t}$ in (iii). Since $(1-\alpha)\geq 0$ and $g'_t$ satisfies \eqref{eq.NN}, part (iv) follows from the definition of $\Lambda_\alpha$. For part (v) a similar argument as in (iv) is valid from the definition of $\lambda_\alpha$ as now $(1-\alpha)>0$. Finally, part (vi) is clear from \eqref{eq.NN} and \eqref{eq.lim}.
	\end{proof}

\begin{prop}\label{P.C_tHv} Let $v\colon [0,1)\to (0,\infty)$ be a log-convex weight satisfying $\lim_{r\to 1^-}v(r) =0$ and  for which the differentiation operator $D\colon H^\infty_v\to H^\infty_w$ and the integration operator $J\colon H^\infty_w\to H^\infty_v$ are continuous, where  $w(r):=(1-r)v(r)$, for $r\in [0,1)$. For each $t\in [0,1)$, the following statements are valid.
	\begin{itemize}
		\item[\rm (i)] $[V_{g_t},H^\infty_v]=[C_t, H^\infty_v]=H^\infty_w$ as Banach spaces.
		\item[\rm (ii)] $[V_{g_t},H^0_v]=[C_t, H^0_v]=H^0_w$ as Banach spaces.
		\item[\rm (iii)] $H^\infty_v$ is a proper subspace of $[C_t,H^0_v]$ and hence, so is $H^0_v\subseteq H^\infty_v$.
	\end{itemize}
\end{prop}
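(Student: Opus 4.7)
The approach is to deduce all three parts from Corollary \ref{C.Opt_H_v} applied to the symbol $g = g_t$. By Lemma \ref{P.Propgt}(ii)--(iii), $g'_t$ and $1/g'_t$ belong to $H^\infty$; the assumed continuity of $D\colon H^\infty_v \to H^\infty_w$ and $J\colon H^\infty_w \to H^\infty_v$, together with the log-convexity of $v$, is (by the characterizations of \cite{AT} recalled before Proposition \ref{P.Opt_H_v}) equivalent to $v$ satisfying both \eqref{eq.Con-D} and \eqref{eq.D}. Hence the standing hypotheses of Corollary \ref{C.Opt_H_v} are in force.

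I next verify that $V_{g_t}$ acts continuously on $H^\infty_v$ and on $H^0_v$. Using \eqref{eq.C} together with the pointwise inequality $w(z)|f(z)g'_t(z)| \le \|g'_t\|_\infty v(z)|f(z)|$ (since $w \le v$ on $[0,1)$), multiplication by $g'_t$ sends $H^\infty_v$ continuously into $H^\infty_w$, whence the hypothesis $J\colon H^\infty_w \to H^\infty_v$ yields $V_{g_t}\in \cL(H^\infty_v)$. Replacing $H^\infty$ by $H^0$ throughout, and using continuity of $J\colon H^0_w \to H^0_v$ (obtained by polynomial density as in the proof of Proposition \ref{P.Opt_H_v}(iii)), gives $V_{g_t}\in\cL(H^0_v)$. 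Parts (i) and (ii) now follow directly from Corollary \ref{C.Opt_H_v}(i) and (ii), respectively.

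For part (iii) I appeal to Corollary \ref{C.Opt_H_v}(iii), for which the only missing ingredient is compactness of $V_{g_t}\colon H^0_v \to H^0_v$. I plan to prove this by a dilation argument. For $\rho\in(0,1)$, define $\Phi_\rho f(z) := f(\rho z)$. Since $v$ is non-increasing, one has $\|\Phi_\rho f\|_{\infty,v}\le v(0)\sup_{|w|\le\rho}|f(w)|$, and the Montel property of $H(\D)$, applied through the continuous embedding $H^0_v\hookrightarrow H(\D)$, then shows $\Phi_\rho\in\cL(H^0_v)$ is compact. Parametrising the segment from $\rho z$ to $z$ by $\xi=sz$ with $s\in[\rho,1]$, and using $v(sz)\ge v(z)$ by monotonicity of $v$, I obtain
\[
v(z)\,\bigl|V_{g_t}f(z) - \Phi_\rho V_{g_t}f(z)\bigr| = v(z)\Bigl|\int_\rho^1 f(sz)\,g'_t(sz)\,z\,ds\Bigr| \le (1-\rho)\,\|g'_t\|_\infty\,\|f\|_{\infty,v}
\]
for every $f\in H^0_v$ and $z\in\D$. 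Consequently $\|V_{g_t} - \Phi_\rho V_{g_t}\|_{H^0_v\to H^0_v}\to 0$ as $\rho\to 1^-$; since each $\Phi_\rho V_{g_t}$ is compact as the composition of a compact operator with a bounded one, $V_{g_t}$ is itself compact on $H^0_v$. Corollary \ref{C.Opt_H_v}(iii) then yields $H^\infty_v \varsubsetneqq [V_{g_t},H^0_v] = [C_t,H^0_v] = H^0_w$, and the trailing assertion about $H^0_v\subseteq H^\infty_v$ is automatic.

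The main obstacle is the compactness step; the clean tail bound above works purely because of the monotonicity of $v$ (giving $v(z)/v(sz)\le 1$ for $s\le 1$) and the uniform bound on $g'_t$ afforded by $t<1$, and no finer structure of the weight beyond the standing hypotheses is needed.
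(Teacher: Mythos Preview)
Your proof is correct and follows the same overall strategy as the paper: reduce all three parts to Corollary~\ref{C.Opt_H_v} via the properties $g_t',\,1/g_t'\in H^\infty$ from Lemma~\ref{P.Propgt}. The one substantive difference lies in how compactness of $V_{g_t}$ on $H^0_v$ is obtained for part~(iii). The paper simply cites \cite[Proposition~2.7]{ABR_CM} for the compactness of $C_t\colon H^0_v\to H^0_v$ (whence $V_{g_t}=S\circ C_t$ is compact as well), whereas you supply a direct, self-contained argument via the dilation operators $\Phi_\rho$ and the tail estimate $\|V_{g_t}-\Phi_\rho V_{g_t}\|\le(1-\rho)\|g_t'\|_\infty$. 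Your route is more elementary and avoids the external reference; it works cleanly precisely because $t<1$ forces $g_t'\in H^\infty$, and because monotonicity of $v$ handles the weight comparison $v(z)\le v(sz)$. You also verify the continuity of $V_{g_t}$ on $H^\infty_v$ and $H^0_v$ explicitly, which the paper leaves implicit in its citation. Both approaches are valid; yours trades brevity for self-containment.
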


\begin{proof} It is known that $C_t\colon H^0_v\to H^0_v$ is compact, \cite[Proposition 2.7]{ABR_CM}. Hence, the result follows from Corollary \ref{C.Opt_H_v} and Lemma \ref{P.Propgt}(ii) and (iii). 
\end{proof}

\begin{remark}\label{R.43}\rm 
	(a) Recall, if $v$ satisfies both \eqref{eq.Con-D} and \eqref{eq.D}, then both $D$ and $J$ (in Proposition \ref{P.C_tHv}) are necessarily continuous.
	
	(b) Concerning part (ii) of Proposition \ref{P.C_tHv} (with $v$ as given there), set $X:=H^0_v$, $Y:=\cP$ the space of all polynomials on $\D$ and $T:=C_t$, for $t\in (0,1)$. Then $Y\subseteq X$ and, as noted earlier, $Y$ is dense in $X$. Since $Y$ is also dense in $H^0_w=[T,X]$, it follows from Proposition \ref{P.Denso} that $H^0_v$ is \textit{dense} in $[C_t,H^0_v]$.
\end{remark}

\begin{prop}\label{P.Kor} Let $t\in [0,1)$ and $\gamma>0$.
	\begin{itemize}
		\item[\rm (i)] $[V_{g_t},A^{-\gamma}]=[C_t, A^{-\gamma}]=A^{-(\gamma+1)}$ and $[V_{g_t},A^{-\gamma}_0]=[C_t, A^{-\gamma}_0]=A^{-(\gamma+1)}_0$ as Banach spaces.
		\item[\rm (ii)] $A^{-\gamma}$ is a proper  subspace of $[C_t, A^{-\gamma}_0]=A_0^{-(\gamma+1)}$.
		\end{itemize}
\end{prop}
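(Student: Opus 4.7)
The plan is a direct specialization of Proposition~\ref{P.C_tHv} to the Korenblum weight $v_\gamma(r):=(1-r)^\gamma$. First I would identify $A^{-\gamma}=H^\infty_{v_\gamma}$ and $A^{-\gamma}_0=H^0_{v_\gamma}$, and observe that the associated weight $w(r):=(1-r)v_\gamma(r)=(1-r)^{\gamma+1}=v_{\gamma+1}(r)$, so that $H^\infty_w=A^{-(\gamma+1)}$ and $H^0_w=A^{-(\gamma+1)}_0$. This reduces the statement to verifying that the symbol $g_t$ and the weight $v_\gamma$ jointly satisfy the hypotheses of Proposition~\ref{P.C_tHv} for every $t\in[0,1)$ and $\gamma>0$.

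Next I would check those hypotheses. Log-convexity of $v_\gamma$ is immediate from the convexity of $x\mapsto -\gamma\log(1-e^x)$ on $(-\infty,0)$; $\lim_{r\to 1^-}v_\gamma(r)=0$ since $\gamma>0$; and, as noted in the paragraph preceding Corollary~\ref{C.Opt_Kor}, $v_\gamma$ satisfies both \eqref{eq.Con-D} and \eqref{eq.D}, which ensures the continuity of $D\colon A^{-\gamma}\to A^{-(\gamma+1)}$ and $J\colon A^{-(\gamma+1)}\to A^{-\gamma}$ (the classical Hardy--Littlewood results). On the symbol side, Lemma~\ref{P.Propgt}(ii),(iii) gives $g_t',\,1/g_t'\in A(\D)\subseteq H^\infty$, while Lemma~\ref{P.Propgt}(vi) gives $g_t\in\cB_0$, which (as recalled before Corollary~\ref{C.Opt_Kor}) guarantees that $V_{g_t}$ is compact on both $A^{-\gamma}$ and $A^{-\gamma}_0$, and in particular continuous there, so that the standing continuity assumptions of Proposition~\ref{P.C_tHv} are satisfied.

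With these inputs, part~(i) follows at once: Proposition~\ref{P.C_tHv}(i) yields $[V_{g_t},A^{-\gamma}]=[C_t,A^{-\gamma}]=A^{-(\gamma+1)}$ and Proposition~\ref{P.C_tHv}(ii) yields $[V_{g_t},A^{-\gamma}_0]=[C_t,A^{-\gamma}_0]=A^{-(\gamma+1)}_0$, in both cases as Banach spaces. For part~(ii), Proposition~\ref{P.C_tHv}(iii) says $H^\infty_v$ is properly contained in $[C_t,H^0_v]$, which under the specialization $v=v_\gamma$ reads $A^{-\gamma}\varsubsetneqq[C_t,A^{-\gamma}_0]=A^{-(\gamma+1)}_0$. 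There is essentially no obstacle here; the entire argument is bookkeeping on top of Proposition~\ref{P.C_tHv} and Lemma~\ref{P.Propgt}, and the only point deserving a moment's care is confirming that the weight $v_\gamma$ and the family of symbols $g_t$ simultaneously fit into the framework of that proposition for every $t\in[0,1)$ and $\gamma>0$.
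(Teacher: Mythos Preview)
Your proposal is correct and essentially the same as the paper's proof: both reduce the statement to earlier results together with Lemma~\ref{P.Propgt}. The only cosmetic difference is that the paper cites Corollaries~\ref{C.Opt_Kor} and~\ref{C.OpK_c} (which already have the Korenblum weight built in and require only the symbol hypotheses on $g_t$), whereas you cite Proposition~\ref{P.C_tHv} (which already has the symbol $g_t$ built in and requires only the weight hypotheses on $v_\gamma$); these two routes are interchangeable specializations of Corollary~\ref{C.Opt_H_v}.
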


\begin{proof} The stated results follow from Corollaries \ref{C.Opt_Kor} and \ref{C.OpK_c} and Lemma \ref{P.Propgt}.
\end{proof}

Concerning the classical Cesàro operator we have the following result.

\begin{prop}\label{P.Kor1} Let  $\gamma>0$.
	\begin{itemize}
		\item[\rm (i)] $[V_{g_1},A^{-\gamma}]=[C_1, A^{-\gamma}]$ and $[V_{g_1},A^{-\gamma}_0]=[C_1, A^{-\gamma}_0]$ as Banach spaces.
		\item[\rm (ii)] $A^{-\gamma}$ is a proper  subspace of $[C_1, A^{-\gamma}]$ and $A^{-\gamma}_0$ is a proper  subspace of $[C_1, A^{-\gamma}_0]$.
	\end{itemize}
\end{prop}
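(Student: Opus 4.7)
The plan is to begin by cataloguing the two relevant features of $g_1(z)=-\mathrm{Log}(1-z)$ that govern the problem. A direct estimate gives $(1-|z|)|g_1'(z)|=(1-|z|)/|1-z|\leq 1$, so $g_1\in\cB$; but evaluating radially yields $(1-r)|g_1'(r)|=1\not\to 0$, so $g_1\notin\cB_0$. Moreover $g_1'(z)=1/(1-z)\notin H^\infty$, while $1/g_1'(z)=1-z$ does lie in $H^\infty$. Consequently neither Corollary \ref{C.Opt_Kor} nor Corollary \ref{C.OpK_c} (nor Proposition \ref{P.Kor}) is available here, and a substitute argument is required.

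For part (i), I would mimic the proof of Proposition \ref{P.Kor}, but replace the appeal to Corollaries \ref{C.Opt_Kor}--\ref{C.OpK_c} (which need $g'\in H^\infty$) by the weaker assertion of Proposition~5.3 of \cite{ABR_JFA}, which only requires continuity of $V_g$. Since $g_1\in\cB$ forces $V_{g_1}$ to be continuous on both $A^{-\gamma}$ and $A^{-\gamma}_0$, that reference gives the continuity of $C_1=T_{g_1}$ on these spaces together with the equalities $[V_{g_1},A^{-\gamma}]=[C_1,A^{-\gamma}]$ and $[V_{g_1},A^{-\gamma}_0]=[C_1,A^{-\gamma}_0]$. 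Because $v_\gamma$ is log-convex and satisfies \eqref{eq.Con-D}, Proposition \ref{P_H_v} shows both optimal domains are Banach spaces.

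For part (ii), the natural route is Proposition \ref{Fact 3}(iv) with $v=v_\gamma$. Note that $\lim_{r\to 1^-}v_\gamma(r)=0$, the hypothesis $|g_1'|>0$ on $\D$ is immediate, and part (i) supplies continuity of $C_1$ on $A^{-\gamma}_0$. Thus, once one shows that $C_1\colon A^{-\gamma}_0\to A^{-\gamma}_0$ is \emph{not} surjective, Proposition \ref{Fact 3}(iv) delivers simultaneously both strict inclusions $A^{-\gamma}\subsetneqq[C_1,A^{-\gamma}]$ and $A^{-\gamma}_0\subsetneqq[C_1,A^{-\gamma}_0]$.

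The hard part is therefore the non-surjectivity of $C_1$ on $A^{-\gamma}_0$. By Proposition \ref{Fact 2}, $C_1$ is an isomorphism of $H(\D)$ with explicit inverse $(C_1^{-1}h)(z)=(1-z)(h(z)+zh'(z))$. Were $C_1$ surjective on $A^{-\gamma}_0$, the open mapping theorem would make this formula a bounded operator on $A^{-\gamma}_0$. The plan is to exhibit $h\in A^{-\gamma}_0$ with $(1-z)(h+zh')\notin A^{-\gamma}_0$, exploiting the fact that $|1-z|/(1-|z|)$ is unbounded on $\D$ (e.g.\ along $z_n=i(1-1/n)$, where this ratio tends to infinity, whereas it equals $1$ on the radius where $g_1$ is ``best behaved''). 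The Hardy--Littlewood estimate $(1-|z|)^{\gamma+1}|h'(z)|\leq C\|h\|_{A^{-\gamma}}$ is tight enough in the tangential direction to produce $(1-|z_n|)^\gamma|(1-z_n)(h(z_n)+z_nh'(z_n))|\not\to 0$ for a suitable lacunary or singular choice of $h$. Alternatively, one may cite directly the strict inclusions $A^{-\gamma}\subsetneqq[C,A^{-\gamma}]$ and $A^{-\gamma}_0\subsetneqq[C,A^{-\gamma}_0]$ already established in \cite{ABR_JFA} and recalled in Section~1.
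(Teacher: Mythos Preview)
Your approach is essentially the same as the paper's: both invoke Proposition \ref{Fact 3} with $v=v_\gamma$, using $|g_1'|>0$ on $\D$ and the continuity of $C_1$ on $A^{-\gamma}$ and $A^{-\gamma}_0$. In particular, your part (i) argument via Proposition~5.3 of \cite{ABR_JFA} and Proposition \ref{P_H_v} is precisely what is packaged inside Proposition \ref{Fact 3}(i)--(ii), and for part (ii) both you and the paper reduce to Proposition \ref{Fact 3}(iv) and the non-surjectivity of $C_1$ on $A^{-\gamma}_0$.

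The only substantive difference is how non-surjectivity is obtained. The paper simply cites the known spectral identity $0\in\sigma(C_1;A^{-\gamma})=\sigma(C_1;A^{-\gamma}_0)$ from \cite{AP}, \cite{P}; since $C_1$ is injective, this forces non-surjectivity in one line. Your option (b), citing the strict inclusions already established in \cite{ABR_JFA}, is equally valid and arguably even shorter. Your option (a), an explicit $h\in A^{-\gamma}_0$ with $C_1^{-1}h\notin A^{-\gamma}_0$, is a reasonable plan but remains only a sketch: the tangential estimate you outline does not by itself single out a workable $h$, and carrying it through would require more care than the spectral citation. In short, your proof is correct as written (via option (b)), and matches the paper's route; the paper's choice of the spectral input from \cite{AP}, \cite{P} is the cleaner replacement for your unfinished option (a).
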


\begin{proof} Recall that $A^{-\gamma}$ (resp. $A^{-\gamma}_0$) corresponds to the weight function $v_\gamma(r)=(1-r)^\gamma$ on $[0,1)$ for $H^\infty_v$ (resp. $H^0_v$). Since $|g'_1|>0$ on $\D$, the result follows from Proposition \ref{Fact 3}, after noting  that both the operators $C_1\colon A^{-\gamma}\to A^{-\gamma}$ and   $C_1\colon A^{-\gamma}_0\to A^{-\gamma}_0$ are continuous and $0\in \sigma(C_1; A^{-\gamma})=\sigma(C_1,A^{-\gamma}_0)$; see \cite{AP}, \cite{P}. In particular, $C_1\colon A^{-\gamma}_0\to A^{-\gamma}_0$ is not surjective (as it is injective).
\end{proof}

Concerning $C_t$ acting in $H^p$ we have the following result for its optimal domain.

\begin{prop}\label{P.Hpp} Let $1\leq p<\infty$.
	\begin{itemize}
		\item[\rm (i)] $[V_{g_t},H^p]=[C_t,H^p]$ as Banach spaces for all $t\in [0,1]$.
		\item[\rm (ii)] $H^p$ is a proper subspace of $[C_t,H^p]$ for all $t\in [0,1]$.
		\item[\rm (iii)] $H^1\subseteq [C_t,H^p]\subseteq A^{-(\frac{1}{p}+1)}_0$ for all $1<p<\infty$ and $t\in [0,1)$. Moreover, both inclusions are strict.
		\end{itemize}
\end{prop}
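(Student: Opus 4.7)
All three parts reduce to the general framework developed earlier, once Lemma~\ref{P.Propgt} is invoked to verify the hypotheses on the symbol $g_t$. For (i), I would first check that $V_{g_t}\colon H^p\to H^p$ is continuous for every $t\in[0,1]$ and $1\leq p<\infty$: for $t\in[0,1)$ this follows from Lemma~\ref{P.Propgt}(i), which gives $g_t\in A(\D)\subset VMOA\subset BMOA$, combined with the Aleman--Siskakis theorem recalled in Subsection~3.2 and \cite[Lemma 3.7]{ABR_AIOT}; for $t=1$, $g_1(z)=-{\rm Log}(1-z)\in BMOA$ is classical. Proposition~\ref{P.Id_p} then yields $[V_{g_t},H^p]=[C_t,H^p]$ as Banach spaces with isometric norms. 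For (ii), when $t\in[0,1)$ the membership $g_t\in VMOA$ upgrades continuity to compactness of $V_{g_t}$ on $H^p$ by Aleman--Siskakis, and Proposition~\ref{P.Hp_inclusion} delivers the strict inclusion $H^p\subsetneqq [C_t,H^p]$; for $t=1$ compactness fails, but the strict inclusion $H^p\subsetneqq [V_g,H^p]$ valid for every $g\in BMOA$ is the result from \cite{BDNS} recorded in the introduction.

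For (iii), fix $1<p<\infty$ and $t\in[0,1)$. The first inclusion $H^1\subsetneqq [C_t,H^p]$ together with its strictness follows from Proposition~\ref{P.HpHq}(ii) with $p_1=1$ and $p_2=p$, since $\alpha:=1-1/p$ lies in $(0,1)$ and $g_t\in\lambda_\alpha$ by Lemma~\ref{P.Propgt}(v). For the second inclusion $[C_t,H^p]\subseteq A^{-(1/p+1)}_0$, I would combine two ingredients. First, the standard pointwise estimate $|f(z)|\leq \|f\|_p(1-|z|^2)^{-1/p}$ together with density of polynomials in $H^p$ (and the fact that polynomials lie in the closed subspace $A^{-1/p}_0\subseteq A^{-1/p}$) yields the continuous embedding $H^p\hookrightarrow A^{-1/p}_0$. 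Second, the Hardy--Littlewood continuity of $D\colon A^{-1/p}_0\to A^{-(1/p+1)}_0$ is already used in Subsection~3.1. Then for $f\in [C_t,H^p]$, $V_{g_t}f\in H^p\subseteq A^{-1/p}_0$ implies $fg'_t=(V_{g_t}f)'\in A^{-(1/p+1)}_0$, and multiplying by $1/g'_t=1-tz\in H^\infty$ (Lemma~\ref{P.Propgt}(iii)), which preserves $A^{-(1/p+1)}_0$, yields $f\in A^{-(1/p+1)}_0$.

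Strictness of the second inclusion is the main obstacle. The plan is to exploit the strictness of $H^p\subsetneqq A^{-1/p}_0$ via an explicit witness. Taking $F(z):=(1-z)^{-1/p}/\log(e/(1-z))$, the bound $(1-|z|)^{1/p}|F(z)|\lesssim 1/|\log(e/(1-z))|\to 0$ places $F$ in $A^{-1/p}_0$, while a standard $H^p$-norm computation via the substitution $u=\log(1/|1-re^{i\theta}|)$ produces a divergent radial integral as $r\to 1^-$, so $F\notin H^p$. Setting $h:=F-F(0)$ (so that $h(0)=0$) and $f:=h'/g'_t=(1-tz)h'$, one has $f\in A^{-(1/p+1)}_0$ (because $h'\in A^{-(1/p+1)}_0$ and $1-tz$ is bounded) while $V_{g_t}f=h\notin H^p$, witnessing $f\in A^{-(1/p+1)}_0\setminus[C_t,H^p]$. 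Both boundary estimates for $F$ are routine, but they must be verified simultaneously for the same function, which is where care is needed.
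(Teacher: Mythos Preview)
Parts (i), (ii) and the two inclusions in (iii) are handled correctly and essentially as in the paper; your minor deviation in (ii) for $t=1$ (citing \cite{BDNS} rather than the spectral argument via Proposition~\ref{Fact 4}) is a legitimate alternative.

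There is, however, a genuine gap in your argument for the \emph{strictness} of the inclusion $[C_t,H^p]\subseteq A^{-(1/p+1)}_0$. Your witness $F(z)=(1-z)^{-1/p}/\log(e/(1-z))$ actually \emph{belongs} to $H^p$ for every $p>1$. Indeed, $|F(re^{i\theta})|^p=|1-re^{i\theta}|^{-1}|\log(e/(1-re^{i\theta}))|^{-p}$, and since $|\log(e/(1-z))|\geq 1+\log(1/|1-z|)$ on $\D$, the radial integrals are dominated (up to a bounded remainder away from $\theta=0$) by
\[
\int_{1-r}^{1}\frac{d\theta}{\theta\,(1+\log(1/\theta))^{p}}
=\int_{1}^{1+\log(1/(1-r))}u^{-p}\,du\;\longrightarrow\;\frac{1}{p-1}\quad(r\to 1^-),
\]
which is finite precisely because $p>1$. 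Hence $h=F-F(0)\in H^p$ and your $f$ lies in $[C_t,H^p]$ after all. Your one-line justification ``$(1-|z|)^{1/p}|F(z)|\lesssim 1/|\log(e/(1-z))|\to 0$'' is also imprecise, since the right-hand side does not tend to $0$ when $|z|\to 1$ away from $z=1$; the membership $F\in A^{-1/p}_0$ is true but needs a two-region argument. A repaired construction would take $F(z)=(1-z)^{-1/p}(\log(e/(1-z)))^{-\alpha}$ with $0<\alpha\leq 1/p$, for which the analogous integral $\int u^{-\alpha p}\,du$ does diverge.

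The paper avoids any construction entirely: by Proposition~\ref{Fact 4}(i) the space $[C_t,H^p]$ is isometric to $H^p$, hence reflexive for $1<p<\infty$, whereas $A^{-(1/p+1)}_0$ is isomorphic to $c_0$ by Proposition~\ref{Fact 3}(i) (the weight $v_{1/p}$ satisfies condition~(B)). Since $c_0$ is not reflexive, equality is impossible. This structural argument is both shorter and more robust than hunting for an explicit witness.
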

\begin{proof}
	(i)  For $t=1$ it is known that $C_1\colon H^p\to H^p$ is continuous, \cite[Theorem 1]{Sis}. Given $t\in [0,1)$ we can apply
	Proposition \ref{P.Id_p}, after recalling that $C_t\colon H^p\to H^p$ is continuous because $g_t\in VMOA$; see Lemma \ref{P.Propgt} and the discussion prior to Lemma \ref{L.35}.
	
(ii) For $t\in [0,1)$,	 we apply Proposition \ref{P.Hp_inclusion}, after noting that $C_t\colon H^p\to H^p$ is compact because $g_t\in VMOA$. For $t=1$, since $0\in \sigma(C_1, H^p)$, \cite[Theorem 1]{Sis}, the result follows from Proposition \ref{Fact 4}. 
	
	(iii) The fact that $H^1$ is a proper subspace of $[C_t,H^p]=[V_{g_t},H^p]$ follows from Proposition \ref{P.HpHq}(ii) (with $q$ replaced by $p$ there and setting $p=1$ there), after  observing that $g_t\in \lambda_{1-\frac{1}{p}}$ as $(1-\frac{1}{p})<1$ (cf. Lemma \ref{P.Propgt}(v)).
			
	Next, if $f\in [C_t, H^p]=[V_{g_t},H^p]$, then $f\in H(\D)$ and $V_{g_t}f\in H^p$. So, by \cite[Theorem 5.9]{Du} the function $V_{g_t}f\in A^{-\frac{1}{p}}_0$. Accordingly, $f\in [V_{g_t},A^{-\frac{1}{p}}_0]=A^{-(\frac{1}{p}+1)}_0$ (cf. Proposition \ref{P.Kor}(i)). This shows that $[C_t,H^p]\subseteq A^{-(\frac{1}{p}+1)}_0$. This inclusion is strict because  $[C_t,H^p]$ is reflexive as $1<p<\infty$ (cf. Proposition \ref{Fact 4}(i)) whereas $A^{-(\frac{1}{p}+1)}_0$ is isomorphic to $c_0$ (cf. Proposition \ref{Fact 3}(i)).
\end{proof}

%We now turn our attention to the setting when the $H^p$-spaces are replaced by the Korenblum spaces.
%Note that both of the optimal domain spaces $[V_{g_0}, A^{-\gamma}]$ and  $[V_{g_0}, A^{-\gamma}_0]$ are Banach spaces (cf. Proposition \ref{P.D1}). In view of Proposition \ref{P.Ugu}(ii), (iv), also $[C,A^{-\gamma}]=[T_{g_0}, A^{-\gamma}]$ and  $[C,A^{-\gamma}_0]=[T_{g_0}, A^{-\gamma}_0]$ are Banach spaces (see also \cite{ABR-R}). The following result is a consequence of Theorems 3.1 and 3.2 in \cite{ABR-R}.

\begin{remark}\rm By considering $C_1\mathbbm{1}$ it is clear that $C_1$ \textit{fails} to map $H^\infty$ into itself. However, for $p=\infty$, parts (i) and (ii) of Proposition \ref{P.Hpp} remain valid for each $t\in [0,1)$. The proof is as follows.
	 %Note that the multiplication operator $M\colon H^\infty\to H^\infty$ given by $(Mf)(z)=zf(z)$, for $f\in H^\infty$ and each $z\in\D$, is continuous and satisfies $V_{g_t}=MC_t$. 
	Since $C_t\colon H^\infty\to H^\infty$ is compact, \cite[Proposition 2.2]{ABR_AIOT}, we can conclude via \cite[Lemma 3.7]{ABR_AIOT} that $V_{g_t}\colon H^\infty\to H^\infty$ is also compact. Hence, Proposition \ref{P.Id_p} implies that $[V_{g_t},H^\infty]=[C_t,H^\infty]$ as Banach spaces (which is part (i) of Proposition \ref{P.Hpp} for $p=\infty$) and Proposition \ref{P.Hp_inclusion} yields that $H^\infty$ is a proper subspace of $[C_t,H^\infty]$. That is, part (ii) of Proposition \ref{P.Hpp} is also valid for $p=\infty$.
\end{remark}

\begin{prop}\label{P.A_Ct}Let $t\in [0,1)$.
	\begin{itemize}
		\item[\rm (i)] $[V_{g_t},A(\D)]=[C_t,A(\D)]$ as Banach spaces.
		\item[\rm (ii)] $H^\infty$ is a proper subspace of $[C_t,A(\D)]$.
		\item[\rm (iii)] $H^1\subseteq [C_t,A(\D)]\subseteq A^{-1}_0$.
		\item[\rm (iv)] $H^1\subseteq [C_t, H^\infty]\subseteq A^{-1}$.
		\end{itemize}
	\end{prop}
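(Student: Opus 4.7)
The plan is to reduce every statement to results already established in the paper, once I verify the basic mapping and compactness properties of $V_{g_t}$ on $A(\D)$. The crucial inputs will be Lemma \ref{P.Propgt}, Proposition \ref{P.A}, Proposition \ref{P.A_H}, Proposition \ref{P.Hinfinito_inclusione}, together with the remark immediately preceding this proposition, which records that $C_t\colon H^\infty\to H^\infty$ is compact.

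First I would establish that $V_{g_t}\colon A(\D)\to A(\D)$ is continuous. For any $f\in A(\D)\subseteq H^\infty$, the function $fg_t'\in H^\infty\subseteq H^1$ because $g_t'\in H^\infty$ by Lemma \ref{P.Propgt}(ii). Since $(V_{g_t}f)'=fg_t'\in H^1$, Theorem 3.11 of \cite{Du} (invoked in exactly this way inside the proof of Proposition \ref{P.A_H}(i)) yields $V_{g_t}f\in A(\D)$. Continuity then follows from the closed graph theorem, using that $A(\D)$ embeds continuously into $H(\D)$ and $V_{g_t}\in\cL(H(\D))$. With this in hand, part (i) is immediate from Proposition \ref{P.A}(ii). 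Furthermore, Lemma \ref{P.Propgt}(ii)-(iii) verifies the hypotheses $g_t',\tfrac{1}{g_t'}\in H^\infty$ of Proposition \ref{P.A_H}, so parts (iii) and (iv) follow directly from parts (i) and (ii) of that proposition, respectively.

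It remains to prove (ii), namely that $H^\infty\varsubsetneqq[C_t,A(\D)]$. By Proposition \ref{P.Hinfinito_inclusione} it suffices to show that $V_{g_t}\colon A(\D)\to A(\D)$ is compact. By Proposition \ref{P.A}(i) this is equivalent to the compactness of $C_t\colon A(\D)\to A(\D)$, which I would obtain as follows. The remark preceding the proposition provides compactness of $C_t\colon H^\infty\to H^\infty$. Composing with the continuous inclusion $A(\D)\hookrightarrow H^\infty$ yields compactness of $C_t\colon A(\D)\to H^\infty$. Since we have already verified that $C_t(A(\D))\subseteq A(\D)$ (via the same Duren argument applied to $V_{g_t}f$ followed by Lemma \ref{L.35}(ii) to obtain $C_t f = S^{-1}V_{g_t}f\in A(\D)$), and since $A(\D)$ carries the restriction of the $H^\infty$-norm, the resulting operator $C_t\colon A(\D)\to A(\D)$ is compact. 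Applying Proposition \ref{P.Hinfinito_inclusione} completes the proof.

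The main obstacle I anticipate is the compactness transfer in the last paragraph: one has to be slightly careful to confirm that $C_t(A(\D))$ actually lies in $A(\D)$ (so that the codomain can legitimately be restricted) and that the norm on $A(\D)$ coincides with the restriction of the $H^\infty$-norm, so that a compact operator into $H^\infty$ with range in $A(\D)$ remains compact into $A(\D)$. Both points are routine, and neither requires any new estimate beyond what the cited results supply.
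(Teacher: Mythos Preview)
Your proposal is correct and follows essentially the same route as the paper: both reduce everything to Propositions \ref{P.A}, \ref{P.Hinfinito_inclusione}, \ref{P.A_H} together with $g_t',\,1/g_t'\in H^\infty$ and the compactness of $C_t$ on $A(\D)$. The only cosmetic difference is that the paper obtains compactness of $C_t$ on $A(\D)$ (and the inclusion $C_t(H^\infty)\subseteq A(\D)$) by a direct citation of \cite[Propositions 2.2 and 3.10]{ABR_AIOT}, whereas you recover compactness on $A(\D)$ from the $H^\infty$ case recorded in the preceding remark together with your Duren argument showing $C_t(A(\D))\subseteq A(\D)$; this is a harmless and correct variation.
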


\begin{proof}
	The result follows from Propositions \ref{P.A}, \ref{P.Hinfinito_inclusione} and \ref{P.A_H}, after  observing that $C_t\colon H^\infty\to H^\infty$ and $C_t\colon A(\D)\to A(\D)$ are compact, \cite[Proposition 2.2]{ABR_AIOT}, that $C_t(H^\infty)\subseteq A(\D)$ by \cite[Proposition 3.10]{ABR_AIOT},  and that $g'_t, \frac{1}{g'_t}\in H^\infty$.
\end{proof}

\section{Finite dimensional kernel}

We begin with a result which is pure linear algebra.

\begin{prop}\label{P.LA} Let $E\not=\{0\}$ be any  linear  space over $\C$ and $T\colon E\to E$ be any linear map such  that $T$ is not injective.
	\begin{itemize}
		\item[\rm (i)] $T^{-1}(\Ker(T))=T^{-1}(\Ker(T)\cap {\rm Im}(T))=\Ker(T^2)$.
		\item[\rm (ii)] $\Ker(T)\subseteq T^{-1}(\Ker(T))$.
		\item[\rm (iii)] Suppose that $\Ker(T)\not=\{0\}$ is  finite dimensional. Then also $T^{-1}(\Ker(T))$ is  finite dimensional. More precisely, 
		\begin{itemize}
			\item[\rm (a)] if $\Ker(T)\cap {\rm Im}(T)=\{0\}$, then $T^{-1}(\Ker(T))=\Ker(T)$  and
			\item[\rm (b)]  if $\Ker(T)\cap {\rm Im}(T)\not=\{0\}$, then 
			$$
			{\rm dim}\,\Ker(T)<{\rm dim}\,T^{-1}(\Ker(T))={\rm dim}\,\Ker(T)+{\rm dim}\,(\Ker(T)\cap {\rm Im}(T)).
			$$
		\end{itemize}
	\end{itemize}
\end{prop}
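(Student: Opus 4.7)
The plan is to treat this as pure linear algebra; neither topology nor the analytic structure of $H(\D)$ plays any role. Parts (i) and (ii) are essentially unpackings of the definitions. For (i), the first equality reduces to the observation that $Tx\in {\rm Im}(T)$ for every $x\in E$, so $Tx\in \Ker(T)$ is equivalent to $Tx\in \Ker(T)\cap {\rm Im}(T)$. The second equality is the very definition of $\Ker(T^2)$: namely, $x\in T^{-1}(\Ker(T))$ means $T(Tx)=0$, i.e., $x\in\Ker(T^2)$. Part (ii) is immediate: if $Tx=0$, then $Tx\in\Ker(T)$.

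The whole content lies in (iii), and the key device is to restrict $T$ to the subspace $T^{-1}(\Ker(T))$. I will argue that the restricted map
\[
\pi:=T|_{T^{-1}(\Ker(T))}\colon T^{-1}(\Ker(T))\longrightarrow \Ker(T)\cap {\rm Im}(T)
\]
is a well-defined, surjective linear map whose kernel is exactly $\Ker(T)$. Well-definedness is precisely (i); surjectivity follows because every $y\in \Ker(T)\cap {\rm Im}(T)$ is of the form $y=Tx$ for some $x\in E$, and then $T(Tx)=Ty=0$ forces $x\in T^{-1}(\Ker(T))$; the kernel identification is a consequence of (ii).

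To make the dimension bookkeeping rigorous without assuming finite dimensionality of $E$, I would choose a vector space complement $U$ of $\Ker(T)$ inside $T^{-1}(\Ker(T))$, so that $T^{-1}(\Ker(T))=\Ker(T)\oplus U$. The restriction $\pi|_U\colon U\to \Ker(T)\cap {\rm Im}(T)$ is then injective (as $\Ker(\pi)\cap U=\Ker(T)\cap U=\{0\}$) and surjective (lift each $y=Tx$ and split $x=k+u$ with $k\in \Ker(T)$, $u\in U$, noting that $y=Tu$). Hence $\pi|_U$ is an isomorphism, yielding $\dim U=\dim(\Ker(T)\cap {\rm Im}(T))$. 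Combined with $\dim T^{-1}(\Ker(T))=\dim \Ker(T)+\dim U$, this gives the identity
\[
\dim T^{-1}(\Ker(T))=\dim \Ker(T)+\dim(\Ker(T)\cap {\rm Im}(T)),
\]
from which (a) and (b) follow at once. In particular, finite dimensionality of $\Ker(T)$ forces finite dimensionality of both summands and hence of $T^{-1}(\Ker(T))$.

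Since this is elementary linear algebra, there is no genuine obstacle; the only small point worth stating explicitly is that selecting a complement $U$ requires no finite-dimensional hypothesis on $E$ (it is available in any vector space), which is why the argument works uniformly regardless of $\dim E$.
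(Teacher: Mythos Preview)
Your proposal is correct and follows essentially the same approach as the paper. The paper carries out part (iii)(b) by choosing explicit preimages $x_i$ of a basis $\{y_i\}$ of $\Ker(T)\cap{\rm Im}(T)$ and verifying directly that $\Ker(T)\oplus{\rm span}\{x_i\}=T^{-1}(\Ker(T))$; your restricted map $\pi$ and abstract complement $U$ package the same computation as a rank--nullity/first isomorphism argument, which is a bit cleaner but not a different idea.
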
 

\begin{proof}(i) Of course, $T^{-1}(\Ker(T)\cap {\rm Im}(T))\subseteq T^{-1}(\Ker(T))$. Conversely, let $x\in T^{-1}(\Ker(T))$. Then $Tx\in \Ker(T)$. Moreover, also $Tx\in {\rm Im}(T)$. Accordingly, $Tx\in \Ker(T)\cap {\rm Im}(T)$ and hence, $x\in T^{-1}(\Ker(T)\cap {\rm Im}(T))$. We can conclude that $T^{-1}(\Ker(T))=T^{-1}(\Ker(T)\cap {\rm Im}(T))$. 
	
	Now, let $x\in T^{-1}(\Ker(T))$. Then $Tx\in \Ker(T)$ and so also $T^2x=0$,  that is, $x\in \Ker(T^2)$. On the other hand, if $x\in \Ker(T^2)$, then $T^2x=T(Tx)=0$. This means that $Tx\in \Ker(T)$ from which it follows that $x\in T^{-1}(\Ker(T))$. Hence, $T^{-1}(\Ker(T))=\Ker (T^2)$.
	
	(ii) By part (i) we have $T^{-1}(\Ker(T))=\Ker(T^2)$ and so the conclusion follows from $\Ker(T)\subseteq \Ker(T^2)$.
	
	(iii) (a) The condition $\Ker(T)\cap {\rm Im}(T)=\{0\}$ implies, by part (i), that
	\[
	T^{-1}(\Ker(T))=T^{-1}(\{0\})=\Ker(T).
	\] 
	In particular, $T^{-1}(\Ker(T))$ is finite dimensional.
	
	(b) Set $m:={\rm dim}\,(\Ker(T)\cap {\rm Im}(T))$ and $n:={\rm dim}\,\Ker(T)$, in which case $1\leq m\leq n$. Let $\{y_i\}_{i=1}^m$ be a \textit{basis} for $\Ker(T)\cap {\rm Im}(T)$. The inclusion $\Ker(T)\cap {\rm Im}(T)\subseteq {\rm Im}(T)$ implies that there exist elements $\{x_i\}_{i=1}^m\subseteq E\setminus\{0\}$ satisfying $Tx_i=y_i$, for $1\leq i\leq m$. Since the elements $\{y_i\}_{i=1}^m$ are all distinct, also $\{x_i\}_{i=1}^m$ consists of distinct elements. Moreover, $Tx_i=y_i\not=0$, for all $1\leq i\leq m$, implies that $\{x_i\}_{i=1}^m\subseteq E\setminus\Ker(T)$.
	
	Suppose that the linear combination
	\begin{equation}\label{eq.LC}
		\alpha_1x_1+\ldots +\alpha_mx_m+\beta_1y_1+\ldots +\beta_my_m=0.
	\end{equation}
	Apply the linear map $T$ yields 
	\[
	\alpha_1 Tx_1+\ldots +\alpha_mTx_m=0,
	\]
	where we have used $Ty_i=0$ for $1\leq i\leq m$ as  $\{y_i\}_{i=1}^m\subseteq \Ker(T)$. Since $Tx_i=y_i$, for all $1\leq i\leq m$, we have $\alpha_1y_1+\ldots +\alpha_my_m=0$. Then the  linear independence of $\{y_i\}_{i=1}^m$ yields that $\alpha_1=\alpha_2=\ldots=\alpha_m=0$ and so it follows from \eqref{eq.LC} that $\beta_1y_1+\ldots +\beta_my_m=0$. Hence, also $\beta_1=\beta_2=\ldots=\beta_m=0$. Accordingly, the vectors $\{x_i\}_{i=1}^m\cup\{y_i\}_{i=1}^m$ are all distinct (if $x_i=y_j$, then $Tx_i=Ty_j=0$ and so $x_i\in \Ker(T)$; a contradiction) and linear independent. The claim is that
	\begin{equation}\label{eq.Int}
		\Ker(T)\cap {\rm span}\{x_i\}_{i=1}^m=\{0\}.
	\end{equation}
	Indeed, let $x\in 	\Ker(T)\cap {\rm span}\{x_i\}_{i=1}^m$. Then $Tx=0$ and there exist complex numbers $\{\gamma_i\}_{i=1}^m$ satisfying $x=\sum_{i=1}^m\gamma_ix_i$. Apply $T$ yields that $0=Tx=\sum_{i=1}^m\gamma_iTx_i=\sum_{i=1}^m\gamma_iy_i$. By the linear independence of $\{y_i\}_{i=1}^m$ we can conclude that $\gamma_i=0$, for all $i=1,\ldots,m$, that is, $x=0$.  Hence, \eqref{eq.Int} has been established.
	
	Define the subspace $V:=\Ker(T)+ {\rm span}\{x_i\}_{i=1}^m$. It folllows from \eqref{eq.Int} that
	\begin{equation}\label{eq.Dim}
		{\rm dim}\, V={\rm dim}\,\Ker(T)+ {\rm dim}(\Ker(T)\cap {\rm Im}(T)).
	\end{equation}
	The claim is that 
	\begin{equation}\label{eq.V}
		V=T^{-1}(\Ker(T)).
	\end{equation}
	Indeed, we have by part (ii) that $\Ker(T)\subseteq T^{-1}(V)$. Also, $Tx_i=y_i$, for all $i=1,\ldots,m$, with $\{y_i\}_{i=1}^m\subseteq  \Ker(T)$, implies that $\{x_i\}_{i=1}^m\subseteq  T^{-1}(\Ker(T))$ and so ${\rm span}\{x_i\}_{i=1}^m\subseteq  T^{-1}(\Ker(T))$. It is then clear that $V\subseteq T^{-1}(\Ker(T))$.
	
	Conversely, let $x\in T^{-1}(\Ker(T))$. By part (i) we have that $x\in T^{-1}(\Ker(T)\cap {\rm Im}(T))$, i.e., $Tx\in \Ker(T)\cap {\rm Im}(T)$. So, there exist $\alpha_i\in\N$, for $1\leq i\leq m$, such that $Tx=\sum_{i=1}^m\alpha_iy_i$. Hence,
	\[
	T\left(x-\sum_{i=1}^m\alpha_ix_i\right)=Tx-\sum_{i=1}^m\alpha_iTx_i=Tx-\sum_{i=1}^m\alpha_iy_i=Tx-Tx=0.
	\]
	Accordingly, $x-\sum_{i=1}^m\alpha_ix_i\in \Ker(T)$ and hence,
	\[
	x=\left(x-\sum_{i=1}^m\alpha_ix_i\right)+\left(\sum_{i=1}^m\alpha_ix_i\right)\in \Ker(T)+{\rm span}\{x_i\}_{i=1}^m=V.
	\]
	This completes the proof of \eqref{eq.V} and hence, via \eqref{eq.Dim}, also the proof of (b).
\end{proof}

\begin{remark}\label{Rem}\rm  Let $E\not=\{0\}$ be a linear space over $\C$ and $T\colon E\to E$ be a linear map. Suppose that $\Ker(T)$ is finite dimensional and satisfies
	\begin{equation}\label{eq.INcc}
		\{0\}\not=\Ker(T)\subseteq {\rm Im}(T).
	\end{equation}
	Of course, a sufficient condition for \eqref{eq.INcc} to hold is that $T\colon E\to E$ is surjective.
	
	It follows from \eqref{eq.INcc} that $\Ker(T)\cap {\rm Im}(T)=\Ker(T)$ and hence, part (iii) (b) of  Proposition \ref{P.LA} implies that
	\begin{equation}\label{eq.Dim2}
		{\rm dim}\, T^{-1}(\Ker(T))=2 {\rm dim}\, \Ker(T).
	\end{equation}
	Setting $X:=\Ker(T)$, observe that $T(X)\subseteq X$ and that the space
	\[
	T^{-1}(X)=T^{-1}(\Ker(T))=\{x\in E:\ Tx\in X\}
	\]
	is what we   call the \textit{optimal domain} $[T,X]$ of $T\colon X\to X$. This would be   genuinely the case if $E=H(\D)$ and $T\in \cL(H(\D))$ is \textit{not} injective. Moreover, \eqref{eq.Dim2} becomes
	\[
	{\rm dim}\, [T,X]=2{\rm dim}\, X
	\]
	and so $X\subsetneqq [T,X]$. As shown in the following result, for this setting the space $X$ is \textit{not dense} in $[T,X]$, the optimal domain $[T,X]$ is a complete semi-normed space and  $\|\cdot\|_{[T,X]}$ is \textit{not} a norm.
\end{remark}

We now consider the case when $E:=H(\D)$.

\begin{prop}\label{P.2} Suppose that $T\in \cL(H(\D))$ satisfies the following three conditions
	\begin{itemize}
		\item[\rm (a)] The operator  $T$ is not injective,
		\item[\rm (b)] $\Ker (T)\subseteq T(H(\D))$, and
		\item[\rm (c)] $\Ker (T)$ is finite dimensional. %say $n={\rm dim}\, \Ker(T)$, for some $n\geq 1$.
	\end{itemize}
	Let $X:=\Ker (T)$ be equipped with any norm $\|\cdot\|_X$. Then $X$ is a Banach space of analytic functions on $\D$ satisfying $T(X)\subseteq X$. Moreover, its optimal domain $[T,X]=T^{-1}(X)$ satisfies
	\begin{itemize}
		\item[\rm (i)] $X\subsetneqq [T,X]$,
		\item[\rm (ii)] $[T,X]$ is a  semi-normed space but, $\|\cdot\|_{[T,X]}$ is not a norm,
		\item[\rm (iii)] $X$ is not dense in $[T,X]$ and
		\item[\rm (iv)] $[T,X]$ is a complete semi-normed space.
	\end{itemize}
\end{prop}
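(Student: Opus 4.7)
The plan is to exploit the finite-dimensional rigidity furnished by hypothesis (c) together with the lifting property (b), with Proposition~\ref{P.LA} handling all of the dimensional bookkeeping.

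First I would dispatch the setup. $X=\Ker(T)$ is a closed subspace of $H(\D)$ by continuity of $T$, and is finite-dimensional by (c), so every norm $\|\cdot\|_X$ turns it into a Banach space; moreover the inclusion $X\hookrightarrow H(\D)$ is automatically continuous because every linear map out of a finite-dimensional normed space is continuous. Since $T(X)=\{0\}\subseteq X$, the optimal domain $[T,X]=T^{-1}(X)$ is well defined.

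For (i) I invoke Proposition~\ref{P.LA}(iii)(b) with $E=H(\D)$: condition (b) says $\Ker(T)\cap {\rm Im}(T)=\Ker(T)$, and (a) forces $\Ker(T)\neq\{0\}$, so we fall into case (b) of that result. It yields
\[
\dim[T,X]=\dim T^{-1}(\Ker(T))=\dim\Ker(T)+\dim(\Ker(T)\cap{\rm Im}(T))=2\dim X,
\]
which strictly exceeds $\dim X$, giving (i). For (ii) and (iii), the semi-norm $\|f\|_{[T,X]}=\|Tf\|_X$ vanishes on all of $X=\Ker(T)\neq\{0\}$, which immediately breaks the norm axiom and proves (ii). More precisely, for every $f\in[T,X]$ and every $x\in X$ we have
\[
\|f-x\|_{[T,X]}=\|T(f-x)\|_X=\|Tf\|_X,
\]
so the semi-norm distance from $f$ to $X$ equals $\|Tf\|_X$; this is strictly positive on the nonempty set $[T,X]\setminus X$ guaranteed by (i), so $X$ is not dense in $[T,X]$.

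The step in which hypothesis (b) is genuinely used, and which I regard as the one place the argument could go wrong, is (iv). Given a Cauchy sequence $(f_n)\subseteq[T,X]$ for the semi-norm, the identity $\|f_n-f_m\|_{[T,X]}=\|Tf_n-Tf_m\|_X$ shows that $(Tf_n)$ is Cauchy in the Banach space $X$, hence converges to some $g\in X$. Because $g\in X=\Ker(T)\subseteq T(H(\D))$ by (b), I can pick $f\in H(\D)$ with $Tf=g\in X$; then $f\in T^{-1}(X)=[T,X]$ and $\|f_n-f\|_{[T,X]}=\|Tf_n-g\|_X\to 0$, establishing completeness. Without (b), a Cauchy sequence could admit an image-limit $g\in X\setminus T(H(\D))$ with no pre-image in $[T,X]$, so the lifting step is precisely where that hypothesis earns its keep.
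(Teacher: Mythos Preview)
Your proof is correct. The main divergence from the paper is in part (iv): the paper proves separately a lemma that every finite-dimensional semi-normed space is complete and then simply observes that $[T,X]$ has dimension $2\dim X<\infty$ by part (i). Your lifting argument bypasses this lemma entirely by pushing the Cauchy sequence into the Banach space $X$ and pulling back the limit via hypothesis (b); this is more direct and would work even without knowing $[T,X]$ is finite-dimensional. A smaller difference occurs in (i): you invoke Proposition~\ref{P.LA}(iii)(b) directly to get $\dim[T,X]=2\dim X$, whereas the paper essentially reproves that proposition in situ by choosing a basis $\{g_j\}$ of $\Ker(T)$, lifting to $\{f_j\}$, and verifying $T^{-1}(X)=\Ker(T)\oplus{\rm span}\{f_j\}$. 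Your route is cleaner given that Proposition~\ref{P.LA} is already available. One remark: your closing comment that (iv) is ``the one place'' where (b) is genuinely used is slightly misleading, since (b) is equally essential in (i) to force $\Ker(T)\cap{\rm Im}(T)=\Ker(T)\neq\{0\}$ and land in case (b) of Proposition~\ref{P.LA}; without it one could have $[T,X]=X$.
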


\begin{proof} (i) Let $n={\rm dim}\, X$, with $n\geq 1$, and  $\{g_1,g_2,\ldots, g_n\}$ be a basis for $X$, i.e., $X={\rm span }\{g_j:\ 1\leq j\leq n\}$. By condition (b) for $T$ there exist functions $\{f_1,f_2,\ldots,f_n\}\subseteq H(\D)$ satisfying  $Tf_j=g_j$ for $1\leq j\leq n$.
	
	Since $g_j\not=0$, we have $f_j\not\in X$  for $1\leq j\leq n$. Moreover, $\{f_j\}_{j=1}^n\cup\{g_j\}_{j=1}^n$ are linearly independent in $H(\D)$. Clearly, $T(X)\subseteq X$.
	
	The lcH-topology $\tau_c$ in $H(\D)$ when restricted to $X$ is equivalent to any norm topology on $X$ (as ${\rm dim}\, X<\infty$). Fix a norm $\|\cdot\|_X$ in $X$, say,
	\[
	\left\|\sum_{j=1}^n\alpha_j g_j\right\|_X:=\max_{1\leq j\leq n}|\alpha_j|.
	\]
	Then $X\subseteq H(\D)$ with a continuous inclusion (actually the inclusion is an isomorphism into) and so $X$ is a Banach space of analytic functions on $\D$.
	
	The claim is that $T^{-1}(X):=\{f\in H(\D):\ Tf\in X\}$ is the linear subspace of $H(\D)$ given by 
	\begin{equation}\label{0.7}
		T^{-1}(X)=\Ker(T)+{\rm span}\{f_j:\ 1\leq j\leq n\}.
	\end{equation}
	Clearly, $\Ker(T)\subseteq T^{-1}(X)$. Furthermore, for  $1\leq j\leq n$, we have $f_j\in  T^{-1}(X)$   because $Tf_j=g_j\in X$. So, also ${\rm span}\{f_j:\ 1\leq j\leq n\}\subseteq T^{-1}(X)$. Since $T^{-1}(X)$ is a linear space, it follows that $\Ker(T)+{\rm span}\{f_j:\ 1\leq j\leq n\}\subseteq T^{-1}(X)$. Conversely, let $f\in T^{-1}(X)$. Then $Tf\in X$ and so $Tf=\sum_{j=1}^n\alpha_jg_j$ for some $(\alpha_1,\alpha_2,\ldots,\alpha_n)\in\C^n$. Define $h:=\sum_{j=1}^n\alpha_jf_j\in {\rm span}\{f_j:\ 1\leq j\leq n\}$, in which case 
	\[
	T(f-h)=Tf-Th=\sum_{j=1}^n\alpha_jg_j-\sum_{j=1}^n\alpha_jTf_j=\sum_{j=1}^n\alpha_jg_j-\sum_{j=1}^n\alpha_jg_j=0.
	\]
	Hence, $(f-h)\in \Ker (T)$. This implies that $f=(f-h)+h\in \Ker(T)+{\rm span}\{f_j:\ 1\leq j\leq n\}$. Therefore, $T^{-1}(X)\subseteq \Ker(T)+{\rm span}\{f_j:\ 1\leq j\leq n\}$. The claim is thereby established.
	
	Recall that 
	$[T,X]=\{f\in H(\D):\ Tf\in X\}=T^{-1}(X)$ and so $X\subseteq [T,X]$. It is routine to check that
	${\rm span}\{f_j:\ 1\leq j\leq n\}\cap X=\emptyset$ because $Tf_j=g_j\in X$, for $1\leq j\leq n$. It follows from \eqref{0.7} that
	\[
	X\subsetneqq [T,X].
	\]
	
	(ii)	Clearly, 
	\[
	\|f\|_{[T,X]}:=\|Tf\|_X,\quad f\in [T,X]
	\]
	specifies a seminorm in $[T,X]$ as $T$ is linear and $\|\cdot\|_X$ is a norm. Since $\|g_j\|_{[T,X]}=\|Tg_j\|_X=0$, for $1\leq j\leq n$, we see that $\|\cdot \|_{[T,X]}$ is  not a norm.
	
	(iii)	For every $g\in X$  note that
	\[
	\|g-f_1\|_{[T,X]}=\|Tg-Tf_1\|_X=\|g_1\|_X=1
	\]
	and so $X$ is not dense in $[T,X]$.
	
	(iv)	That the semi-normed space $[T,X]$ is complete is a consequence of  the following general result.  \qed
\end{proof}

Note that assumption (b) in Proposition \ref{P.2} is satisfied whenever $T\in \cL(H(\D))$ is surjective.

\begin{lemma}Every finite dimensional semi-normed space is complete.
\end{lemma}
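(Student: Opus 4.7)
The plan is to reduce the assertion to the classical fact that every finite dimensional normed space over $\C$ is a Banach space. Let $(V,p)$ be a finite dimensional semi-normed space and define the linear subspace
\[
N:=\{x\in V:\ p(x)=0\},
\]
which is a subspace by the homogeneity and triangle inequality of $p$. On the quotient $V/N$ set $\|[x]\|:=p(x)$; this is well-defined, since for any $n\in N$ the triangle inequality yields $p(x+n)\le p(x)+p(n)=p(x)$ and symmetrically $p(x)\le p(x+n)$, so $p$ is constant on cosets of $N$. The resulting map $\|\cdot\|$ is readily checked to be a genuine norm on $V/N$ (positivity follows from the very definition of $N$).

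Next, since $\dim(V/N)\leq \dim V<\infty$, the quotient $(V/N,\|\cdot\|)$ is a finite dimensional normed space, hence complete, by the standard fact that all norms on a finite dimensional vector space are equivalent to the Euclidean one.

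Now consider any Cauchy sequence $(x_n)_{n\in\N}\subseteq V$ with respect to $p$. Since
\[
\|[x_n]-[x_m]\|=p(x_n-x_m)\to 0\quad(n,m\to\infty),
\]
the sequence $([x_n])_{n\in\N}$ is Cauchy in $V/N$ and therefore converges to some class $[y]$ with $y\in V$. Consequently $p(x_n-y)=\|[x_n]-[y]\|\to 0$, that is, $x_n\to y$ in $(V,p)$. This proves completeness.

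No real obstacle is anticipated; the only step requiring a moment of care is the verification that the prescription $\|[x]\|:=p(x)$ on $V/N$ is well-defined and actually a norm, which is immediate from the choice of $N$. The argument is indifferent to the specific role of $[T,X]$ in Proposition~\ref{P.2} and applies to any finite dimensional semi-normed space.
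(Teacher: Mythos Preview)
Your proof is correct and follows essentially the same idea as the paper's: both reduce to the completeness of a finite dimensional \emph{normed} space obtained by factoring out the null space $N=\{x:p(x)=0\}$ of the seminorm. The only cosmetic difference is that the paper chooses an algebraic complement $X_1$ of $N$ in $V$ and works with the identity $\|x_0+x_1\|=\|x_1\|$, whereas you pass to the quotient $V/N$; since $V/N$ is canonically isomorphic (as a normed space) to $(X_1,p|_{X_1})$, the two arguments are interchangeable.
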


\begin{proof} Let $(X,\|\cdot\|_X)$ be a finite dimensional semi-normed space. In the event that $\|\cdot\|_X$ is a norm, the result is well known. Indeed, every finite $n$-dimensional normed space is isomorphic to $\C^n$ (in the complex case) equipped with sup-norm $\|\cdot\|_\infty$.
	
	Suppose then that   $\|\cdot\|_X$ is not a norm. Set $X_0:=\Ker (\|\cdot\|_X)$. Then $X_0$ is a linear subspace of $X$. Since $X$ is finite dimensional,  there exists a linear subspace $X_1$ of $X$ such that $X=X_0\oplus X_1$. Clearly,  $X_1$ is  finite dimensional.  Moreover,  the semi-norm $\|\cdot\|_X$ when restricted to $X_1$ is a norm. Indeed, if $\|x\|_X=0$ for some $x\in X_1$, then $x\in X_0$ and so $x\in X_0\cap X_1=\{0\}$. Hence, $x=0$. Denote by  $\|\cdot\|_{X_1}$ the restriction of  $\|\cdot\|_{X}$ to $X_1$.  Therefore, $(X_1, \|\cdot\|_{X_1})$ is a finite dimensional normed space and so it is complete.
	Moreover, for every $x_0+x_1\in X_0\oplus X_1$, we have that $\|x_0+x_1\|_X\leq \|x_0\|_X +\|x_1\|_X=\|x_1\|_X$ and also that $\|x_1\|_X=\|x_0+x_1-x_0\|_X\leq \|x_0+x_1\|_X +\|x_0\|_X=\|x_0+x_1\|_X$. Consequently, for every $x_0+x_1\in X_0\oplus X_1$, we have  
	\begin{equation}\label{eq.UN}
		\|x_0+x_1\|_X=\|x_1\|_X .
	\end{equation}
	
	Let $(x^n_0+x^n_1)_{n\in\N}\subseteq  X_0\oplus X_1$  be a Cauchy sequence in $(X,\|\cdot\|_X)$. In view of \eqref{eq.UN} we have that $(x^n_1)_{n\in\N}\subseteq X_1$ is a Cauchy sequence in $(X_1,\|\cdot\|_{X_1})$. Since  $(X_1, \|\cdot\|_{X_1})$ is a complete normed space, there exists $x_1\in X_1$ such that $\|x_1^n-x_1\|_{X}=\|x_1^n-x_1\|_{X_1}\to 0$ as $n\to\ \infty$. In view of  \eqref{eq.UN}  it follows that $\|(x_0^n+x_1^n)-x_1\|_X=\|x_0^n+(x_1^n-x_1)\|_X=\|x_1^n-x_1\|_{X}\to 0$ as $n\to\infty$. This means that $(x_0^n+x_1^n)_{n\in\N}$ converges to $x_1$ in $(X\|\cdot\|_X)$. Therefore, we can conclude that $(X\|\cdot\|_X)$ is a complete semi-normed space.\qed
\end{proof}

The following two examples illustrate the applicability of  Proposition \ref{P.2}.

\begin{example}\label{Ex1}\rm  Let $P(z)=\sum_{j=0}^na_jz^j$  with $\{a_j\}_{j=0}^n\subseteq \C$ and $a_n\not=0$ be any complex polynomial of degree $n\geq 1$. Define the ordinary differential operator 
	\[
	T:=\sum_{j=0}^na_jD^j\quad \left({\rm with }\ D:=\frac{d}{dz}\right).
	\]
	The theory of ODE's implies that $T\in \cL(H(\D))$ is surjective, \cite[Ch. 2, Theorem 20]{CL}, and that 
	\[
	X:=\Ker(T)=\left\{f\in H(\D):\ \sum_{j=0}^na_jD^jf=0\right\}
	\]
	is an $n$-dimensional subspace of $H(\D)$, \cite[Ch. 2, Theorems 11 and 12]{CL}. So, 
	all the conditions (a)-(c) of the Proposition \ref{P.2} are satisfied.
\end{example}

\begin{example}\label{Ex2}\rm Define $V\in \cL(H(\D))$ by 
	\[
	(Vf)(z):=\int_0^zf(\xi)\,d\xi,\quad z\in \D,\ f\in H(\D),
	\]
	and set $T:=D+V$, where $D$ is the differentiation operator. Clearly, $T\in \cL(H(\D))$. The claim is that 
	\begin{equation}\label{0.9}
		\Ker(T)={\rm span}\{\cos (z)\}.
	\end{equation}
	Indeed,  $f\in \Ker(T)$ if and only if $f'(z)+\int_0^zf(\xi)\,d\xi=0$, for all $z\in \D$. It follows that  $f'(0)=0$ and, by differentiation, that $f$ satisfies $f''(z)+f(z)=0$, for all $z\in\D$. Two solutions of $f''(z)+f(z)=0$, for  $z\in\D$, which are linearly independent, are  $y_1(z):=\sin (z)$ and $y_2(z)=\cos (z)$, for $z\in\D$. Accordingly, $f(z):=\alpha \sin (z)+\beta\cos (z)$, for $z\in\D$, with $\alpha,\beta\in\C$. Then $f'(z)=\alpha \cos (z)-\beta \sin (z)$, for $z\in\D$, and the condition $f'(0)=0$  implies that $\alpha=0$. This establishes \eqref{0.9}.%shows that $\Ker(T)={\rm span}\{\cos (z)\}$. 

	On the other hand, the optimal domain space is
	\begin{equation}\label{eq.NN}
		[T,X]={\rm span}\{\cos(z), \sin(z)+z\cos(z)\}.
	\end{equation}
	To establish  the validity of \eqref{eq.NN}, fix $f\in [T,X]$. Then $f\in H(\D)$ and $Tf\in \Ker (T)$. So, $(Tf)(z)=\gamma \cos (z)$ for some $\gamma\in \C$ and for $z\in\D$, i.e., $f'(z)+\int_0^zf(\xi)\,d\xi=\gamma \cos (z)$, for $z\in\D$. It follows  that $f'(0)=\gamma$ and, again by differentiation,  that
	\begin{equation}\label{eq.OD}
		f''(z)+f(z)=-\gamma \sin (z),\quad z\in\D.
	\end{equation}
	The theory of ODE's implies that the general solution of the non-homogeneous equation \eqref{eq.OD} is given by $f(z):=z(a\cos(z)+b\sin(z))+c_1\cos (z)+c_2\sin (z)$, for $z\in\D$, with $a, b, c_1, c_2\in\C$, where $g(z):=z(a\cos(z)+b\sin(z))$ is any solution of the equation $g''(z)+g(z)=-\gamma \sin (z)$, for $z\in\D$ (see \cite[Ch. 2, \S 10]{CL}). The requirement that $g$ satisfies $g''(z)+g(z)=-\gamma \sin (z)$, for $z\in\D$, implies  that $b=0$ and $a=\frac{\gamma}{2}$. Therefore, $g(z)=\frac{\gamma}{2}z\cos(z)$, for $z\in\D$, and hence, the general solution of \eqref{eq.OD} is given by $f(z)=\frac{\gamma}{2}z\cos(z)+c_1\cos (z)+c_2\sin (z)$, for $z\in\D$. Accordingly, $f'(z)=\frac{\gamma}{2}\cos (z)-\frac{\gamma}{2}z\sin(z)-c_1\sin (z)+c_2\cos(z)$, for $z\in\D$.   Then  the condition $f'(0)=\gamma$ yields  that $\gamma+c_2=\frac{\gamma}{2}$ from which it follows that $c_2=\frac{\gamma}{2}$.  Accordingly, $f(z)=\frac{\gamma}{2}(\sin(z)+z\cos(z))+c_1\cos (z)$, for $z\in\D$. This means that $f\in {\rm span}\{\cos(z), \sin(z)+z\cos(z)\}$, i.e, \eqref{eq.NN} is verified.
	
	Define $h(z):=\sin(z)+z\cos(z)$, for $z\in\D$. Then $h\in [T,X]\subseteq H(\D)$ in view of \eqref{eq.NN}. On the other hand, an easy computation shows that $Th=2\cos (z)\in X=\Ker(T)$. Since $X=\Ker(T)={\rm span}\{\cos (z)\}={\rm span}\{\frac{1}{2}Th\}$, it is clear that $\Ker(T)\subseteq T(H(\D))$. So, $T\in \cL(H(\D))$ satisfies condition (b) of the Proposition \ref{P.2}. Actually, the operator $T\in \cL(H(\D))$ is \textit{surjective} because the equation $Tf=g$, with $g\in H(\D)$ fixed, reduces to $f''(z)+f(z)=g'(z)$ and $f'(0)=g(0)$. Hence, the solution is $f(z)=k(z)+c_1\cos (z)+c_2\sin (z)$, for $z\in\D$, where $k\in H(\D)$ is given by
	\[
	k(z)=\cos(z)\int_0^z (-\sin\xi)g'(\xi)\,d\xi+\sin (z)\int_0^z\cos(\xi)g'(\xi)\,d\xi, \quad z\in\D,
	\]
	and $c_1,c_2\in\C$ are  determined by the condition $f'(0)=g(0)$ (\cite[Ch. 2, Theorem 20]{CL}).
\end{example}

\begin{remark}\rm  Consider $T:=D$; see Example \ref{Ex1} with $P(z)=z$, in which case $\Ker(T)={\rm span}\{\mathbbm{1}\}$. Note that 
	\[
	M(\Ker(T))={\rm span}\{z\}\not\subseteq \Ker(T)={\rm span}\{\mathbbm{1}\},
	\]
	where $M\in\cL(H(\D))$ is the multiplication operator given by 
	\[
	(Mf)(z):=zf(z),\quad z\in\D,\ f\in H(\D).
	\] 
	The fact that $M(\Ker(T))\not\subseteq \Ker(T)$ is a particular case of the following more general fact.
	
	Let $X\not=\{0\}$ be any Banach space of analytic functions on $\D$ such that $D(X)\subseteq X$. The closed graph theorem yields that $D\in \cL(X)$. For such an $X$ it cannot be the case that also $M(X)\subseteq X$, because then $M\in \cL(X)$ and, via direct calculation, $D$ and $M$ satisfy the identity 
	\[
	DM-MD=I 
	\]
	in $\cL(X)$. This is impossible. Indeed, applying \cite[(1)]{K} for the Banach algebra $\cL(X)$ with $A:=D$, $T:=M$ and $Q:=I$, implies that the operator $I$ is quasi-nilpotent, i.e., $1=(\|I^n\|_{X\to X})^{1/n}\to \infty$ as $n\to\infty$, which  is a contradiction; see also \cite{S}. Similarly, if $Y\not=\{0\}$ is a  Banach space of analytic functions on $\D$ such that $M(Y)\subseteq Y$, then necessarily $D(Y)\not\subseteq Y$. 
	%if $DM-MD=I$ in $\cL(X)$, then by applying the spectral mapping theorem with $p(z):=1$, for $z\in\D$, \cite[Lemma 17.9]{MV},  it follows that $\sigma(DM)=1+\sigma(MD)=1+\sigma(DM)$. But, there is no any compact subset $K\subseteq \C$ satisfying $K=1+K$. If it is not the case, the identity $K=1+K$ implies that $K=n+K$ for all $n\in\N$. Accordingly, for a fixed $k\in K$, for each $n\in\N$ there exists  $h_n\in K$ such that  $h_n=n+k$. Since $K$ is compact in $\C$, there exists a subsequence $(n_j)_{j\in\N}$ such that $n_j\to +\infty$ and  $h_{n_j}\to h$ for $j\to\infty$. Therefore, from $h_{n_j}=n_j+k$, for all $j\in\N$, it follows that $h=+\infty$; a contradiction.  
	
	For the particular space $X=\Ker (D)=\Ker (T)$ mentioned above, which does  satisfy $D(X)\subseteq X$, it  \textit{must} be the case that $M(X)\not\subseteq X$. The conclusion is:
	
\textit{	Let $X\not=\{0\}$ be any Banach space of analytic functions on $\D$ such that $D(X)\subseteq X$. Then necessarily $M(X)\not\subseteq X$.}
\end{remark}

\vspace{.1cm}

\textbf{Acknowledgements.} The research of A.A. Albanese was partially supported by GNAMPA of  INDAM.
The research of J. Bonet was partially supported by project PID2024-162128NB-I00 funded by MICIU/AEI /10.13039/501100011033 / FEDER, UE and project CIAICO/2023/242 funded by Conselleria de Innovación, Universidades, Ciencia y Sociedad Digital de la Generalitat Valenciana.

\medskip
\noindent
\textbf{Data Availability Statement} Not applicable
\medskip

\noindent
{\textbf{Declarations }}

\medskip
\noindent
\textbf{Conflict of interest}
The authors have no Conflict of interest to declare that	are relevant to the content of this article.

%\section{References}

%\bigskip
\bibliographystyle{plain}

\end{document}